\newenvironment{rmq}[0]{\vspace{3mm}\noindent \textbf{Remark: }}{\vspace{3mm}}
\newcommand{\bs}{\boldsymbol}
\newcommand{\PP}{\mathtt{P}}
\newcommand{\N}{\mathbb{N}}
\font\dsrom=dsrom10 scaled 1200
\def \indi{\textrm{\dsrom{1}}}
\newcommand{\st}{\, |\,}
\newcommand{\cst}{\mathtt{cst}}
\newcommand{\Var}{\mathtt{Var}}
\renewcommand{\P}{\mathbb{P}}
\newcommand{\ssup}[1] {{\scriptscriptstyle{({#1}})}}
\newcommand{\heap}[2]  {\genfrac{}{}{0pt}{}{#1}{#2}}
\newcommand{\sfrac}[2] {\mbox{$\frac{#1}{#2}$}}
\declaretheorem[thmbox=S, numberwithin=section, name=Theorem]{thm}
\newtheorem{prop}[thm]{Proposition}
\newtheorem{lem}[thm]{Lemma}
\renewenvironment{proof}[1][Proof] {\par\pushQED{\qed}\normalfont\topsep6\p@\@plus6\p@\relax\trivlist\item[\hskip\labelsep\bfseries#1\@addpunct{.}]\ignorespaces}{\popQED\endtrivlist\@endpefalse} 
\title{Condensation and symmetry-breaking in the zero-range process\\ with weak site disorder}
\author{C{\'e}cile Mailler\thanks{Department of Mathematical Sciences, University of Bath, Claverton Down, BA2 7AY Bath, UK. c.mailler/maspm@bath.ac.uk},
Peter M{\"o}rters\footnotemark[1]~~and Daniel Ueltschi\thanks{Department of Mathematics, University of Warwick, Coventry CV4 7AL, UL. daniel@ueltschi.org}}
\date{\scriptsize{\today}}
\begin{document}

\thispagestyle{empty}
\maketitle

\vspace{-2\baselineskip}

\begin{abstract}
\noindent 
Condensation phenomena in particle systems typically occur as one of two distinct types: either as a \emph{spontaneous} symmetry breaking in a homogeneous system, in which particle 
interactions enforce condensation in a randomly located site, or as an \emph{explicit} symmetry breaking in a system with background disorder, in which particles
condensate in the site of extremal disorder. In this paper we confirm a recent conjecture by Godr\`eche and Luck by showing, for a zero range process with weak site disorder, 
that there exists a phase where condensation occurs with an intermediate type of symmetry-breaking, in which particles condensate in a site randomly chosen from a range of sites 
favoured by disorder. We show that this type of condensation is characterised by the occurrence of a Gamma distribution in the law of the disorder at the condensation site. We further investigate fluctuations of the condensate size and confirm a phase diagram, again conjectured by Godr\`eche and Luck, showing the existence of phases with normal and anomalous~fluctuations. 
\end{abstract}

\small\tableofcontents\normalsize

\section{Motivation and background}

The purpose of this paper is two-fold. The \emph{first} purpose is to show that for certain low-dimensional particle systems far from equilibrium the simultaneous presence of inter-particle 
interactions and interactions of particles with a spatial disorder can lead to a novel form of symmetry breaking,  occurring in a phase when the two competing particle forces are of 
comparable strength. In these systems we observe that, when the particle density exceeds a certain threshold value, the excess fraction of the particles condensates in a single site.
This site is neither chosen uniformly at random (as would be the case in systems with spontaneous symmetry breaking) nor as a function of the underlying site disorder (as would be the case
in systems with explicit symmetry breaking) but by a nontrivial random mechanism favouring sites with more extreme site disorder. The existence of such systems was predicted in a recent paper by Godr\`eche and Luck~\cite{GL12}.  The \emph{second} purpose of this paper is to give a further example of the ubiquity of the Gamma distribution in particle systems with condensation, which was first observed in  Dereich and M\"orters~\cite{DM13}. In our context the Gamma distribution occurs as the universal distribution of the disorder at the condensation site. 
\medskip

\pagebreak[3]

The interacting particle model under consideration here is the  \emph{zero-range process}, first introduced in the mathematcial literature by Spitzer in~\cite{Spitzer}. 
The zero-range process has gained importance in the statistical mechanics literature, for example as a  generic model for domain wall dynamics in a system far from equilibrium~\cite{KLMST02} or as a model for granular flow~\cite{E00, CG10}. It is also a particularly simple model undergoing a condensation transition, and widely studied for this reason alone~\cite{GSS03,  EFGM05, AGL13}.
It is related to the ideal Bose gas and to spatial permutations~\cite{EJU14}.
The zero-range process has also been studied in a disordered medium, both in infinite~\cite{AFGL00} and finite~\cite{EH05} geometries, and the latter situation is also the context of the present paper.
\medskip

Our version of the zero-range process is a continuous time Markov process, which  can be described as a system of  $m$~indistinguishable particles each located 
in one of $n$~different sites. Every site can hold an arbitrary number of particles. At each time instance particles move independently given the 
particle configuration, and the rate at which particles hop from position $i$ to a different position $j$ is given as $q_{ij} u_k$, where $k$ is the 
number of particles at site~$i$. Here $(q_{ij}\colon 1\leq i,j\leq n)$ is a $Q$-matrix (i.e.\ off-diagonal entries are nonnegative and each row sums 
to zero) describing the unconstrained particle motion, and $(u_k \colon k\geq 0)$ is a sequence of nonnegative weights with $u_0=0$, that describes
the particle interactions. The term zero-range process comes from the fact that, at any given time instance, the interaction is only between particles 
in the same site or, in other words, the jump rate above depends on the global particle configuration only through the number~$k$ of particles on the 
site of departure. The case $u_k=k$ corresponds to independent movement of the particles without interaction, but our interest here is mainly in 
sublinear sequences  $(u_k \colon k\geq 0)$, in which particles move slower if they are aggregated at a site with many other particles. One such case 
would be that $u_k=1$, for all $k>0$, meaning that at every site only one particle is free to move. The phenomena of interest in this paper occur when
$u_k$ is given as a small perturbation of this  case.
\medskip

Assuming that the finite state Markov chain described above is irreducible, general theory insures that the state of the zero-range process 
converges in law, as time goes to infinity, to a unique stationary distribution, or steady state. Denoting by $Q_i$ the number of particles 
located in site~$i$ this  distribution is explicitly given by
$${P}\big(Q_1=q_1,\ldots,Q_n=q_n\big) = \frac1{Z_{m,n}}\, \prod_{i=1}^n \pi_i^{q_i} p_{q_i} \quad \mbox{ if } q_i\geq0 \mbox{ are integers with }\sum_{i=1}^n q_i=m,$$   
where $(\pi_i \colon 1\leq i\leq n)$ is a positive left eigenvector of $Q$ for the eigenvalue zero,  
$(p_k \colon k\geq 0)$ are derived from $(u_k \colon k\geq 0)$ by $p_0=1$ and
$p_k=1/u_1\cdots u_k$, for $k\geq 1$, and $Z_{m,n}$ is the normalisation constant, or partition function. The most studied case is that of 
spatial homogeneity in which $(\pi_i \colon 1\leq i\leq n)$ is a vector of constant (nonzero) entries. Already in this simple case the phenomenon 
of condensation can occur, as established in the seminal paper by Gro\ss kinsky et al.~\cite{GSS03}.
%
In the set-up above, the particle system above allows for general spatial inhomogeneities encoded in the \mbox{$Q$-matrix}. Following 
Godr\`eche and Luck~\cite{GL12} in this point, we now simplify the analysis by focusing on relatively simple spatial inhomogeneities, which 
are chosen to display the full richness of possible behaviour. To this end we replace the invariant measure of a single particle motion 
$(\pi_i \colon 1\leq i\leq n)$  by a random environment given as a product of a random site disorder. More precisely, we are assuming that 
$\pi_i=X_i$, for $1\leq i\leq n$, where $(X_i \colon i\in\mathbb{N})$ is a sequence of independent, identically 
distributed random variables. We think of $X_i$ as the fitness of site~$i$, where fitter sites are a more attractive host for particles. 
One of many possible dynamics that give rise to this stationary behaviour is if sites are arranged as a circle, and particles located at site~$i$
with occupancy~$k$ hop clockwise  to their neareast neighbour with rate $u_k/X_i$. As our results  can be expressed in terms of the stationary distribution 
without explicit reference to any particle dynamics, we do not have to make explicit reference 
to the particle dynamics or the $Q$-matrix underlying our random environment. While this approach enables a rigorous mathematical analysis of the key phenomena, its downside is that our results contain no direct information about the kinetics of the zero-range process. 
\medskip

Our results on this model take the form of limit results where $n$, the number of sites, and $m$, the number of particles, 
go to infinity so that the ratio $m/n$ converges to a fixed density~$\rho>0$. 
We assume that the random variable~$X$ determining the site fitness is bounded from above, 
without loss of generality by the value 1,  and that its distribution function is regularly varying at 1 with index~$\gamma$, for some $\gamma>0$.
The sequence $(p_k \colon k\geq 0)$ is assumed to be regularly varying with index $-\beta$, for some $\beta>1$.
The phase diagrams we identify in our main results will be given in terms of the parameters $\beta$ and $\gamma$. 
\medskip

We first show in Theorem~\ref{th:condensation} that if $\beta+\gamma>2$, there exists a positive and finite critical density
$\rho^{\star}$ such that if $\rho>\rho^{\star}$, with probability going to one, there exists a unique site carrying a positive fraction of the particles. This fraction converges to $\rho-\rho^\star>0$. This is the phenomenon of \emph{condensation}. 
\medskip

\pagebreak[3]

If condensation occurs, we ask 
\begin{itemize}
\item[(1)] At which site does the condensation occur?
\item[(2)] What is the fitness of the site at which condensation occurs?
\item[(3)] How does the condensate fraction fluctuate around the limit $\rho-\rho^\star$?
\end{itemize}
Our main results answer these three questions.
In Theorem~\ref{th:I_n} we address the first question. We show that in the case $\gamma>1$,
condensation occurs at the site with highest fitness value,  revealing a case of \emph{explicit} symmetry breaking. 
If $\gamma\leq 1$ and $\beta+\gamma>2$ however, with high probability, condensation occurs at a site chosen 
from a range of sites with high fitness. We describe the non-degenerate
limiting  distribution for the rank order of the condensation site. This result establishes the novel phenomenon of 
\emph{intermediate} symmetry breaking conjectured by Godr\`eche and Luck~\cite{GL12}. 
%
The second question is addressed in Theorem~\ref{th:gamma}, where we show that in the phase of intermediate symmetry breaking
the fitness of the condensation site satisfies a universal limit theorem. In fact, regardless of the underlying fitness distribution, the
disorder of the condensation site converges, appropriately scaled, 
to a Gamma distribution. Recall that the Gamma distribution is not a classical extreme value distribution, so that its occurence in this context may be considered surprising.
%
In Theorem~\ref{thm:fluctuations} we address the third question by studying the quenched fluctuations in the size of the condensate
in the case $\gamma\leq 1$ of weak disorder. We show that, if $\beta+\gamma\geq 3$, the fluctuations around a disorder dependent  finite size approximation of the limiting value $\rho-\rho^{\star}$ are normal. In contrast to this, if $2<\beta+\gamma<3$, the fluctuations are stable with index $\beta+\gamma-1$. In the (easier) annealed setup such a behaviour was also conjectured by  Godr\`eche and Luck~\cite{GL12}. %
\medskip%

Our proofs are mainly based on a careful analysis of  a \emph{grand-canonical ensemble}, a sequence of independent but not identically distributed random variables $Q_1, Q_2,\dots$ with the law of $Q_i$ given explicitly in terms of the fitness~$X_i$. Conditioning on the event $Q_1+\cdots+Q_n=m$ we obtain the distribution of site occupancies in the stationary zero range model with $m$ particles and $n$ sites, often referred to as the \emph{canonical ensemble}. Although the behaviour of the ensembles is radically different in the case of condensation, the key idea is still to derive properties of the canonical ensemble from much more accessible properties of the grand-canonical ensemble. For example, 
we show that the number of particles outside the condensation site in the canonical ensemble is well-approximated by the sum $Q_1+\cdots+Q_n$ of independent random variables in the grand-canonical ensemble. The latter quantity can then be studied by 
classical means. This technique is inspired by ideas of Janson~\cite{Janson12} for a model without disorder. Adaptation of these
ideas to the study of disordered systems is the main technical innovation of this paper.
 
\vspace{\baselineskip}
{\bf Notation: }
The symbol $\cst$ stands for a positive constant which may change its value at every apperance.
Given two sequences $(u_n)_{n\geq 1}$ and $(v_n)_{n\geq 1}$, we write $u_n\sim v_n$ if 
\smash{$\nicefrac{u_n}{v_n} \to 1$}. We write $u_n = o(v_n)$, or $u_n \ll v_n$,  if  
\smash{$\nicefrac{u_n}{v_n} \to 0$}.
We use the symbol $u_n={O}(v_n)$ if there exists $c> 0$ such that $|u_n|\leq c\, |v_n|$ for all sufficiently large~$n$, and 
indicate by $O_\P$ if the implied constant $c$ is allowed to be a random variable under~$\P$. We write 
$u_n = \Theta(v_n)$ if both $u_n={O}(v_n)$ and $v_n={O}(u_n)$ hold. Finally, given a sequence $\delta_n\to 0$ and a function $f$, we write 
$u_n \approx f(v_n \pm \delta_n)$ if $f(v_n - \delta_n) \leq u_n \leq f( v_n +\delta_n)$ 
for all sufficiently large~$n$.

\vspace{\baselineskip}
{\bf Acknowledgements: } The authors are supported by EPSRC through the project EP/K016075/1.
We are grateful to Martin Hairer, Roman Koteck\'y, Victor Rivero, Vitali Wachtel, and Matthias Winkel 
for fruitful discussions on various aspects of this paper.

\section{Statement of the main results}

Let $\mu$ be a probability distribution on $[0,1]$ satisfying, for some $\gamma>0$,
\begin{equation}\label{eq:rvmu}\tag{$\mathtt{RV\mu}$}
\mu([1-x, 1])\sim \alpha_1 \, x^{\gamma}, \text{ when } x\downarrow 0,
\end{equation}
and $(p_k)_{k\geq 0}$ a probability distribution on $\mathbb{N}_0:=\{0,1,2,\ldots\}$ 
such that, for some $\beta>1$,
\begin{equation}\label{eq:rvp}\tag{$\mathtt{RVp}$}
p_k \sim \alpha_2\, k^{-\beta}, \quad\text{ as } k \uparrow\infty.
\end{equation}
 We believe that all our results, except the fluctuation result at the end of this section, hold \emph{mutatis mutandis} if the positive constants $\alpha_1$, $\alpha_2$ were replaced by slowly varying functions. This would require a greater technical effort, which would not help the understanding of the phenomena we are interested in, 
and would be detrimental to the readability of the proofs. 
\pagebreak[3]


We always assume, without loss of generality, that $p_0>0$. Denote by $\Phi\colon[0,1]\to[0,1]$ the generating function of the distribution $(p_k)_{k\geq 0}$, given by 
$$\Phi(z) = \sum_{k=0}^\infty p_k z^k,$$
and 
define the critical density
\[\rho^{\star}:= \int \frac{x \, \Phi'(x)}{\Phi(x)} \, \mu(dx).\]
The random disorder in our model is given by and i.i.d.\ sequence
$\bs X=(X_i \colon i\in\N)$ of random variables with distribution~$\mu$. Given the 
disorder, the stationary distribution of the disordered zero-range process is given by
\begin{equation}\label{eq:stat_dist}
P_{\bs X} (Q_1 = q_1, \ldots, Q_n = q_n) = \frac1{Z_{m,n}}\ \prod_{i=1}^n X_i^{q_i} p_{q_i} \qquad \mbox{ for } 
q_1,\ldots,q_n\in\N_0 \mbox{ with } q_1 + \cdots + q_n = m,
\end{equation}
where  $Z_{m,n}$ is the normalisation constant. We write $P_{\bs X}$ for the `quenched' law of $(Q_1,\dots,Q_n)$ given $\bs X$,
we write $\mathtt P$ for the law of the disorder~$\bs X$, and ${P}_{m,n}=\mathtt E P_{\bs X}$ for the `annealed' law, the joint law of  $(X_1,\ldots, X_n)$ and $(Q_1,\dots,Q_n)$ with $Q_1 + \cdots + Q_n = m$ and $\rho_n:=\nicefrac{m}{n}\to\rho>0$. 
\bigskip

Denote by $(Q^{(1)}_n, \ldots, Q^{(n)}_n)$ the order statistics of $(Q_1, \ldots, Q_n)$. Our first result shows that in the condensation
regime $\beta+\gamma>2$, if the particle density~$\rho$ exceeds the critical value $\rho^{\star}$, the excess particles form a 
condensate of macroscopic occupancy in exactly one site.
\bigskip

\begin{thm}[Condensation]\label{th:condensation}
Suppose $\beta+\gamma>2$. Then $\rho^\star<\infty$ and if $\rho > \rho^{\star}$  then, with high ${P}_{m,n}$-probability,
\[Q_n^{(1)} = (\rho-\rho^{\star}) n + o(n) 
\quad \text{ and }\quad
Q_n^{(2)} = o(n).\]
\end{thm}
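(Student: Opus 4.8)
The plan is to transfer the problem from the canonical ensemble to the grand-canonical ensemble, following the strategy outlined in the introduction. First I would introduce, for a fugacity parameter $\varphi\in[0,1)$, the grand-canonical family of independent random variables $Q_i=Q_i^{(\varphi)}$ with $P(Q_i=k)=\tfrac{1}{\Phi(\varphi X_i)}(\varphi X_i)^k p_k$, so that conditioning $Q_1+\cdots+Q_n=m$ recovers the law in~\eqref{eq:stat_dist}, independently of the choice of $\varphi$. One computes $\mathtt E_{\bs X}[Q_i^{(\varphi)}]=\varphi X_i \Phi'(\varphi X_i)/\Phi(\varphi X_i)$, and as $\varphi\uparrow 1$ the quenched mean density $\tfrac1n\sum_i \mathtt E_{\bs X}[Q_i^{(1)}]$ converges, by the law of large numbers and regular variation of $\mu$ near $1$, to $\rho^\star=\int x\Phi'(x)/\Phi(x)\,\mu(dx)$. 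The hypothesis $\beta+\gamma>2$ is exactly what makes $\rho^\star<\infty$: near $x=1$ the integrand behaves like $\Phi'(1)$ times a bounded factor when $\beta>2$, and when $1<\beta\le 2$ it has an integrable singularity of order $(1-x)^{\beta-2}$ against a measure putting mass $\sim(1-x)^\gamma$, so finiteness needs $\gamma+\beta-2>-1$. So the subcritical mass $\rho^\star n$ is the most the homogeneous bulk can absorb, and the surplus $(\rho-\rho^\star)n$ must pile up somewhere.

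Next I would make the localisation rigorous. Fix $\varphi=1$ and set $S_n=Q_1+\cdots+Q_n$ in the grand-canonical ensemble; then $\mathtt E_{\bs X}S_n=\rho^\star n+o(n)$ a.s., while $\mathtt{Var}_{\bs X}(S_n)$ is, up to constants, $\sum_i \mathtt{Var}_{\bs X}(Q_i)$, which because $p_k\sim\alpha_2 k^{-\beta}$ with $\beta>1$ is at most of order $n$ times the $\mathtt P$-a.s. finite quantity controlled by $X_i$ near $1$ (in the worst regime $\beta$ close to $1$ one uses a truncated second moment and a concentration bound instead of the variance, but still $S_n$ concentrates on scale $o(n)$). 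Hence for density $\rho>\rho^\star$ the canonical constraint forces a deficit of $\approx(\rho-\rho^\star)n$ particles that the i.i.d.\ bulk cannot supply, and a standard "one big jump'' argument for heavy-tailed sums shows this deficit is made up by a single site: the probability that $S_n=m$ conditioned to have the two largest coordinates both of size $\varepsilon n$ is superpolynomially smaller than the probability it has exactly one coordinate of size $(\rho-\rho^\star)n+o(n)$, because $p_k$ is subexponential and the relevant large-deviation cost of two big sites is the product of two tail factors whereas one big site costs only one. Quantitatively, $P_{m,n}(Q_n^{(2)}>\varepsilon n)\le \mathrm{poly}(n)\cdot (p_{\varepsilon n})^2 / p_{(\rho-\rho^\star)n}\to 0$ after dividing by the local probability $P_{\bs X}(S_n\in m+O(1))$, which is itself polynomial in $1/n$ by a local limit theorem for the bulk sum.

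The remaining step is to pin down $Q_n^{(1)}=(\rho-\rho^\star)n+o(n)$. Lower bound: if $Q_n^{(1)}\le(\rho-\rho^\star-\varepsilon)n$ then all sites are $o(n)$-or-smaller and the total is $S_n'\le \rho^\star n + o(n) + (\rho-\rho^\star-\varepsilon)n<m$, contradiction on an event of probability tending to $1$. Upper bound: removing the maximal site, the remaining $n-1$ sites again form (conditionally on their sum) a canonical ensemble at density below $\rho^\star$, whose grand-canonical proxy has mean $\le\rho^\star n+o(n)$, so the removed site cannot carry more than $(\rho-\rho^\star+\varepsilon)n$ particles without the rest being implausibly small. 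Combining gives concentration of $Q_n^{(1)}$ at $(\rho-\rho^\star)n$ and of $Q_n^{(2)}$ at $o(n)$. The main obstacle I anticipate is the quantitative local limit theorem for the bulk sum $S_n$ uniformly in the random disorder $\bs X$ — one needs a lower bound $P_{\bs X}(S_n=k)\ge \mathrm{cst}/(\text{polynomial in }n)$ for $k$ near $\rho^\star n$, valid on a high-probability set of disorders, and this is delicate precisely in the regime $1<\beta\le 2$ where $S_n$ is in the domain of attraction of a stable law rather than a Gaussian; here one leans on the $O_\P$-type control of the disorder tails near $1$ and on local limit theorems for triangular arrays of independent heavy-tailed variables.
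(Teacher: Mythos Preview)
Your high-level plan --- pass to the grand-canonical ensemble at $\varphi=1$ and run a one-big-jump argument --- is the paper's strategy, but there are two real gaps and one misidentified obstacle.

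First, your ``lower bound for $Q_n^{(1)}$'' is not an argument. Under $P_{m,n}$ the sum is identically $m$, so ``the total is $<m$, contradiction'' cannot occur; what must be shown is that the \emph{grand-canonical} event $\{S_n=m,\ Q_i\le\varepsilon n\ \forall i\}$ (the paper's $\mathcal E_4$) has $\mathbb P_{\bs X}$-probability $o(\mathbb P_{\bs X}(\mathcal E_1))$. This is the hardest step and your two-tails-versus-one-tail heuristic does not touch it: on $\mathcal E_4$ no site pays a heavy tail at all, so subexponentiality is irrelevant and the cost must come from a moderate-deviation bound. The paper does this via an exponential Chebyshev inequality $\mathbb P_{\bs X}(\mathcal E_4)\le e^{-s_nm}\prod_i\mathbb E_{\bs X}[e^{s_n\bar Q_i}]$ with a disorder-dependent $s_n$ (of order $\tfrac{\log n}{n}$ when $\gamma\le1$, of order $-\log X_n^{(1)}\asymp n^{-1/\gamma}$ when $\gamma>1$), followed by a careful splitting of the truncated exponential moment. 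The companion event $\mathcal E_3$ (one big site but of the wrong size) needs a similar treatment and is also absent from your sketch.

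Second, the obstacle you flag --- a quenched local limit theorem for $\mathbb P_{\bs X}(S_n=k)$ --- is a red herring; the paper deliberately avoids it. The trick is to let the condensate occupancy range over a window of width $2\delta_n n$ with $\delta_n\downarrow0$: then $\mathbb P_{\bs X}(\mathcal E_{1,i}^*)=\sum_{|k-(m-\nu_nn)|\le\delta_nn}\mathbb P_{\bs X}(Q_i=k)\,\mathbb P_{\bs X}(S_{n-1}^{(i)}=m-k)$, and the sum in $k$ aggregates the local probabilities into $\mathbb P_{\bs X}(|S_{n-1}^{(i)}-\nu_nn|\le\delta_nn)\to1$ by the weak law. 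No local estimate is ever used.

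Third, the disorder enters more sharply than your bound $(p_{\varepsilon n})^2/p_{(\rho-\rho^\star)n}$ suggests. Each $\mathbb P_{\bs X}(\mathcal E_{1,i}^*)$ carries a factor $X_i^{(\rho_n-\nu_n)n}$, and $\sum_i X_i^{cn}$ behaves qualitatively differently for $\gamma>1$ (dominated by the single maximal fitness, so $\mathbb P_{\bs X}(\mathcal E_1)$ decays like $n^{-\beta}(X_n^{(1)})^{cn}$, stretched-exponentially) and for $\gamma\le1$ (many sites contribute, $\mathbb P_{\bs X}(\mathcal E_1)\asymp n^{1-\beta-\gamma}$). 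The comparisons $\mathcal E_2,\mathcal E_3,\mathcal E_4\ll\mathcal E_1$ have to be carried out separately in the two regimes, with different choices of the tilt $s_n$. (Minor: your integrability check for $\rho^\star$ should read $\beta+\gamma-2>0$, not $>-1$.)
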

\bigskip

The following two theorems show that in the case $\gamma< 1$ the condensate does not normally sit in the site with the largest fitness. 
This is called the `extended condensate case' by Godr\`eche and Luck, but we prefer the term 
intermediate symmetry-breaking to emphasise that the condensate is still located at a single site and not extended over several sites.
We say that a sequence of random variables $(Z_n)_{n\in\N}$   {\bf converges in quenched distribution}  to the random variable $Z$ if, 
for all $\varepsilon >0$ and all $u\in \mathbb R$,
\begin{equation}\label{iqddef}
\mathtt P \left(\left|P_{\bs X}(Z_n \leq u)-P_{\bs X}(Z \leq u)
\right|>\varepsilon\right)
\to 0,\quad \text{ when }n\uparrow\infty.
\end{equation}
We denote by $I_n$ the index of the site of maximal occupancy, so that $Q_{I_n} = Q_n^{(1)}$. By Theorem~\ref{th:condensation}
this eventually defines $I_n$ uniquely in the condensation regime. We further let
$K_n$ be the rank order of the fitness of the condensation site, i.e. $K_n=k$ if and only if
$$\big| \big\{i\in\{1,\dots,n\} \colon X_i > X_{I_n} \big\}\big| = k-1.$$
Recall that the density of a Gamma distributed random variable 
with parameters $(\gamma,\lambda)$ is given by
\[p(x) = \sfrac{\lambda^{\gamma}}{\Gamma(\gamma)} \, \,
x^{\gamma-1} \mathtt{e}^{-\lambda x}
\qquad \mbox{ for }  x\geq 0.\]
\medskip

\begin{thm}[Fitness rank of the condensate]\label{th:I_n} 
\begin{enumerate}[(i)]
\item If $\gamma>1$ and $\rho>\rho^{\star}$, then with high ${P}_{m,n}$-probability we have $K_n=1$.
\item If $\gamma< 1$, $\beta+\gamma>2$ and $\rho>\rho^{\star}$, then
$$\left(n^{\gamma-1}K_n\right)^{\nicefrac{1}{\gamma}} \to K$$ 
in quenched distribution, where $K$ is a Gamma distributed random variable 
of parameters $(\gamma, \frac{\rho-\rho^{\star}}{\alpha_1^{_{\nicefrac1\gamma}}})$.
\end{enumerate}
\end{thm}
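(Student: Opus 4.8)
The plan is to reduce both parts to one statement about the quenched law of the condensation index given the disorder, and then to feed in extreme‑value information about the top fitness values. Set $N_n:=\lfloor(\rho-\rho^\star)n\rfloor$. The first and main step is to show that, conditionally on $\bs X$,
\[
P_{\bs X}(I_n=i)=\frac{X_i^{N_n}}{\sum_{j=1}^n X_j^{N_n}}+\mathtt{err}_n(i),\qquad\text{with }\ \sum_{i=1}^n|\mathtt{err}_n(i)|\to0\ \text{ in }\mathtt P\text{-probability.}
\]
To get this I would decompose $P_{\bs X}(I_n=i)=\sum_q P_{\bs X}(Q_i=q,\,Q_j<q\ \forall j\neq i)$ and rewrite each term, via the grand‑canonical representation, as $X_i^q p_q\,\tilde Z^{(i)}_{m-q}/Z_{m,n}$ up to the negligible effect of the constraints $Q_j<q$, where $\tilde Z^{(i)}_{m-q}$ is the partition function of the $n-1$ sites other than $i$. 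By Theorem~\ref{th:condensation}, transferred from the annealed to the quenched law by Markov's inequality, the $q$‑sum concentrates on $|q-(\rho-\rho^\star)n|=o(n)$, where $m-q\approx\rho^\star n$ is (sub)critical for the remaining sites; a local limit theorem for the grand‑canonical sum of those sites then gives $\tilde Z^{(i)}_{m-q}=c_n(1+o(1))\,\Phi(X_i)^{-1}\prod_{j=1}^n\Phi(X_j)$, uniformly in $i$ and in $q$ over the relevant window, for a constant $c_n$ independent of $i$. Since $\Phi(X_i)\to\Phi(1)=1$ and $X_i^q=X_i^{N_n}(1+o(1))=\mathtt e^{-(\rho-\rho^\star)n(1-X_i)}(1+o(1))$ uniformly over the only sites that matter (those with $n(1-X_i)$ bounded; the rest contribute exponentially little), performing the slowly‑varying $q$‑sum, dividing by $Z_{m,n}$, and normalising produce the displayed identity. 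This first step is the technical heart and the main obstacle: one needs the local limit theorem with a \emph{uniform relative} error and enough control of the atypically fit sites to make the \emph{total} error $o(1)$ — precisely where the partition‑function asymptotics in the spirit of Janson must be adapted to the disordered setting.

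\medskip

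Granting the reduction, the remaining input is the extreme‑value structure of $\bs X$. Because $\mu([1-x,1])\sim\alpha_1 x^\gamma$, the point process $\sum_{i=1}^n\delta_{n\alpha_1(1-X_i)^\gamma}$ converges to a rate‑one Poisson process on $[0,\infty)$; writing $(\Gamma_k)_{k\ge1}$ for its arrival times, $n\alpha_1(1-X_n^{(k)})^\gamma\to\Gamma_k$ for every fixed $k$, so $n(1-X_n^{(k)})$ is of order $n^{1-1/\gamma}$, which diverges when $\gamma>1$ and vanishes when $\gamma<1$. Intuitively, each site with $n(1-X_i)$ of order one carries a weight $\mathtt e^{-(\rho-\rho^\star)n(1-X_i)}$ of order one in the denominator of the first display, there are order $n^{1-\gamma}$ such sites, and the condensate is chosen essentially uniformly from this pool with a bias towards the fittest — which is why $K_n$ is of order $n^{1-\gamma}$ in the regime $\gamma<1$. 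In part (i), $\gamma>1$: here $n(1-X_n^{(k)})\to\infty$ and the successive gaps $n\bigl((1-X_n^{(k+1)})-(1-X_n^{(k)})\bigr)\to\infty$ almost surely (distinct Poisson points), so in $\sum_j X_j^{N_n}\approx\sum_k\mathtt e^{-(\rho-\rho^\star)n(1-X_n^{(k)})}$ the term $k=1$ dominates all the rest — the ratio of the remainder going to $0$ in probability by a dyadic‑shell estimate for the far sites — whence $P_{\bs X}(K_n=1)\to1$ in $\mathtt P$‑probability, i.e.\ $K_n=1$ with high ${P}_{m,n}$‑probability.

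\medskip

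In part (ii), $\gamma<1$, there is no dominating site. Put $L_n(c):=\#\{i\le n:n(1-X_i)\le c\}$; by a weak law for sums of independent indicators — the variance of $n^{\gamma-1}L_n(c)$ being $O(n^{\gamma-1})=o(1)$ — one has $n^{\gamma-1}L_n(c)\to\alpha_1 c^\gamma$ in $\mathtt P$‑probability, uniformly on compacts. For $u>0$, $\{(n^{\gamma-1}K_n)^{1/\gamma}\le u\}=\{K_n\le u^\gamma n^{1-\gamma}\}$ is the event that $I_n$ lies among the $\lfloor u^\gamma n^{1-\gamma}\rfloor$ fittest sites, i.e.\ that $n(1-X_{I_n})\le a_n(u)$ where $L_n(a_n(u))=\lfloor u^\gamma n^{1-\gamma}\rfloor$, so $a_n(u)\to u/\alpha_1^{1/\gamma}$. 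Inserting the first display, using $X_i^{N_n}=\mathtt e^{-(\rho-\rho^\star)n(1-X_i)}(1+o(1))$ on the bounded sites and a dyadic‑shell bound of order $\cst\,C^\gamma\mathtt e^{-(\rho-\rho^\star)C}$ (negligible as $C\to\infty$) for the far ones, both numerator and denominator turn into Stieltjes integrals against $L_n$, and $n^{\gamma-1}L_n(c)\to\alpha_1 c^\gamma$ yields
\[
P_{\bs X}\bigl(K_n\le u^\gamma n^{1-\gamma}\bigr)=\frac{\sum_{i:\,n(1-X_i)\le a_n(u)}X_i^{N_n}}{\sum_{i\le n}X_i^{N_n}}\ \xrightarrow[n\to\infty]{\ \mathtt P\ }\ \frac{\int_0^{u/\alpha_1^{1/\gamma}}t^{\gamma-1}\mathtt e^{-(\rho-\rho^\star)t}\,dt}{\int_0^{\infty}t^{\gamma-1}\mathtt e^{-(\rho-\rho^\star)t}\,dt},
\]
with the common prefactor cancelling. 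The substitution $t=s/\alpha_1^{1/\gamma}$ rewrites the right‑hand side as $\frac{\lambda^\gamma}{\Gamma(\gamma)}\int_0^u s^{\gamma-1}\mathtt e^{-\lambda s}\,ds$ with $\lambda=(\rho-\rho^\star)/\alpha_1^{1/\gamma}$, which is the distribution function at $u$ of a $\mathrm{Gamma}(\gamma,\lambda)$ variable. By the definition of quenched convergence, this is exactly the statement that $(n^{\gamma-1}K_n)^{1/\gamma}\to K$ in quenched distribution with $K\sim\mathrm{Gamma}(\gamma,\lambda)$, as claimed.
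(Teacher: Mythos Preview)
Your strategy matches the paper's: reduce $P_{\bs X}(I_n=i\mid S_n=m)$ to the normalised weight $X_i^{(\rho-\rho^\star)n}$ (packaged in the paper as $\mathbb P_{\bs X}(\mathcal E_{1,i}^*)/\mathbb P_{\bs X}(\mathcal E_1)$ via Lemmas~\ref{lem:calcul_proba_E1i*} and~\ref{lem:calcul_proba_E1}), then read off the answer from the extreme-value structure of the top fitnesses---your Stieltjes-integral computation against $L_n$ in part~(ii) is an equivalent rephrasing of the paper's order-statistics change of variables. The one place where the paper is simpler than your sketch: the reduction does \emph{not} require a local limit theorem for $\sum_{j\neq i}Q_j$; instead one bounds $p_qX_i^q$ uniformly over the window $|q-(\rho_n-\nu_n)n|\le\delta_nn$ by $\alpha_2(\rho-\rho^\star)^{-\beta}n^{-\beta}X_i^{(\rho_n-\nu_n\mp\delta_n)n}/\Phi(X_i)$, pulls this out, and sums the remaining point probabilities to $\mathbb P_{\bs X}(|S_{n-1}^{(i)}-\nu_nn|\le\delta_nn)\to1$ via the grand-canonical law of large numbers, which sidesteps the uniform-local-CLT issue you flag as the main obstacle.
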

\bigskip

Note that the two phases described in Theorem~\ref{th:I_n} are both condensation phases, in
case~(i) explicit symmetry breaking occurs, while in case~(ii) there is intermediate symmetry 
breaking. Figure~1 illustrates the phase diagram established in Theorem~\ref{th:I_n}.
\pagebreak[3]

\begin{figure}[h]
\begin{center}
\includegraphics[width=.3\textwidth]{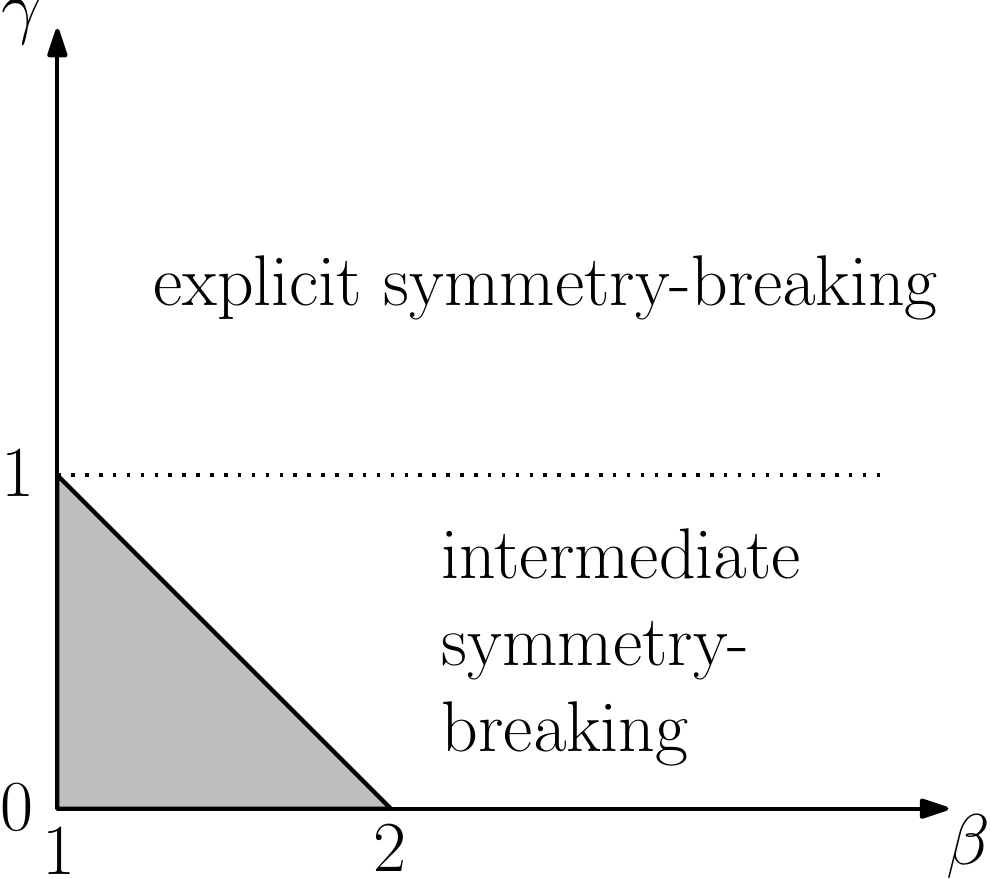}
\end{center}
\caption{This phase diagram shows the behaviour of the disordered zero-range process according to its two parameters $\beta>1$ and $\gamma>0$. The grey part is a zone where there is no condensation, there is condensation in the white part as soon as $\rho>\rho^{\star}$.}
\label{fig:phases}
\end{figure}

The next theorem gives the universal law of the  fitness of the condensate.
\bigskip

\begin{thm}[Fitness of the condensate]\label{th:gamma}
If $\gamma<1$, $\beta+\gamma>2$ and $\rho>\rho^{\star}$,  denote by $F_n = X_{I_n}$ the fitness at the condensation site.
Then $$n(1-F_n) \to F$$ in quenched distribution, where~$F$ is a Gamma distributed random variable
with parameters $(\gamma, \rho-\rho^{\star})$.
\end{thm}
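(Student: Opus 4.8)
The plan is to reduce the statement to Theorem~\ref{th:I_n}(ii) together with classical asymptotics for the order statistics of the disorder. Write $Y_i:=1-X_i$ and let $Y^{(1)}\le\cdots\le Y^{(n)}$ be the order statistics of $Y_1,\dots,Y_n$. By the definition of $K_n$ the condensation site $I_n$ carries the $K_n$-th largest fitness, so $F_n=X_{I_n}=1-Y^{(K_n)}$ and hence
\[n(1-F_n)=n\,Y^{(K_n)}.\]
Conditionally on $\bs X$ this is the value at the random index $K_n$ of the \emph{deterministic}, increasing sequence $k\mapsto nY^{(k)}$, so it suffices to understand the behaviour of $nY^{(k)}$ for $k$ in the window selected by $K_n$ and then to plug in the quenched limit of $K_n$ given by Theorem~\ref{th:I_n}(ii).

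First I would prove the order-statistics estimate. By~\eqref{eq:rvmu} the distribution function $G(y):=\mu([1-y,1])$ is regularly varying at $0$ with index $\gamma$, so its generalised inverse satisfies $G^{-1}(t)\sim(t/\alpha_1)^{1/\gamma}$ as $t\downarrow0$. Writing $Y^{(k)}=G^{-1}(V^{(k)})$ with $V^{(k)}$ the $k$-th order statistic of $n$ i.i.d.\ uniforms on $[0,1]$, and combining a maximal inequality for the uniform empirical process with the uniform convergence theorem for regularly varying functions, one obtains: for all $0<a<b<\infty$, $\mathtt P$-almost surely,
\[\sup_{a\,n^{1-\gamma}\le k\le b\,n^{1-\gamma}}\Big|\frac{n\,Y^{(k)}}{\alpha_1^{-1/\gamma}(n^{\gamma-1}k)^{1/\gamma}}-1\Big|\longrightarrow0 .\]
Here the hypothesis $\gamma<1$ is essential: it forces $n^{1-\gamma}\to\infty$, so the indices in this window tend to infinity and the uniform order statistics concentrate around their means $k/(n+1)$.

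Next I would combine the two ingredients. By Theorem~\ref{th:I_n}(ii) the variable $(n^{\gamma-1}K_n)^{1/\gamma}$ converges in quenched distribution to a Gamma random variable $K$ with parameters $(\gamma,(\rho-\rho^\star)\alpha_1^{-1/\gamma})$; this limit is positive and atomless, so for each $\varepsilon>0$ one can choose $0<a<b<\infty$ so that, on an event of $\mathtt P$-probability tending to $1$, $P_{\bs X}(n^{\gamma-1}K_n\in(a,b))>1-\varepsilon$. On that event the displayed bound lets me replace $nY^{(K_n)}$ by $\alpha_1^{-1/\gamma}(n^{\gamma-1}K_n)^{1/\gamma}$ up to a factor $1+o(1)$ uniform in the realisation of $K_n$. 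Using monotonicity of $k\mapsto Y^{(k)}$ to rewrite $\{nY^{(K_n)}\le u\}$ as $\{n^{\gamma-1}K_n\le n^{\gamma-1}\kappa_{\bs X}(u)\}$ with $\kappa_{\bs X}(u):=\max\{k:nY^{(k)}\le u\}$ and $n^{\gamma-1}\kappa_{\bs X}(u)\to\alpha_1u^\gamma$ almost surely, and then applying a ``converging together'' argument for quenched distribution functions (valid because the limit law is continuous), one gets $n(1-F_n)=nY^{(K_n)}\to\alpha_1^{-1/\gamma}K$ in quenched distribution. Finally $\alpha_1^{-1/\gamma}K$ is Gamma distributed with parameters $(\gamma,\rho-\rho^\star)$, which is the claimed limit $F$.

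I expect the transfer step to be the main obstacle: one must make the ``uniform in $k$'' replacement rigorous and push the convergence of the quenched laws of $K_n$ through the \emph{random, nonlinear} map $k\mapsto nY^{(k)}$ — taking care of the small-index regime, where $V^{(k)}$ fluctuates on the scale of its own mean (harmless because $K$ places no mass near $0$), and of the ``converging together'' lemma for quenched cdf's evaluated at the moving threshold $\kappa_{\bs X}(u)$. An alternative, closer to the grand-canonical methods used elsewhere in the paper, is to argue directly: conditionally on $\bs X$, $P_{\bs X}(I_n=i)$ is asymptotically proportional to $X_i^{(\rho-\rho^\star)n}$, so $P_{\bs X}(n(1-F_n)\le u)$ equals, up to $1+o(1)$, the ratio of $\sum_{i:X_i\ge 1-u/n}X_i^{(\rho-\rho^\star)n}$ to $\sum_j X_j^{(\rho-\rho^\star)n}$; a second-moment (law of large numbers) estimate for these i.i.d.\ sums — again using $\gamma<1$ — together with~\eqref{eq:rvmu} and the substitution $s=n(1-x)$ identifies the limit as $\tfrac{(\rho-\rho^\star)^\gamma}{\Gamma(\gamma)}\int_0^u s^{\gamma-1}\mathtt e^{-(\rho-\rho^\star)s}\,ds$, the Gamma$(\gamma,\rho-\rho^\star)$ distribution function. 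In that route the hard part is instead the proportionality $P_{\bs X}(I_n=i)\propto X_i^{(\rho-\rho^\star)n}$, i.e.\ the passage from the grand-canonical to the canonical ensemble.
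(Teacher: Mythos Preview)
Your main route is correct and genuinely different from the paper's. The paper does not deduce Theorem~\ref{th:gamma} from Theorem~\ref{th:I_n}(ii); instead it repeats, with cosmetic changes, the direct grand-canonical computation already used for Theorem~\ref{th:I_n}. Concretely, the paper writes
\[
\mathbb P_{\bs X}\big(n(1-F_n)\le u,\ S_n=m\big)\sim\!\!\sum_{i:\,X_i\ge 1-u/n}\!\!\mathbb P_{\bs X}(\mathcal E_{1,i}^*),
\]
inserts the asymptotic $\mathbb P_{\bs X}(\mathcal E_{1,i}^*)\approx \alpha_2(\rho-\rho^\star)^{-\beta}n^{-\beta}X_i^{(\rho_n-\nu_n\mp\delta_n)n}/\Phi(X_i)$ from Lemma~\ref{lem:calcul_proba_E1i*}, partitions the sites into slabs $X_i\in[1-\Delta(k{+}1)/n,\,1-\Delta k/n)$, uses Chebyshev to control the slab counts $N_k(n)$, and obtains a Riemann sum for $\int_0^u x^{\gamma-1}e^{-(\rho-\rho^\star)x}\,dx$; division by $\mathbb P_{\bs X}(S_n=m)$ from Lemma~\ref{lem:calcul_proba_E1}(ii) finishes. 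This is precisely your ``alternative'' route, and the proportionality $P_{\bs X}(I_n=i)\propto X_i^{(\rho-\rho^\star)n}$ you flag as the hard part is exactly what Lemmas~\ref{lem:calcul_proba_E1i*}--\ref{lem:E3<<E1} provide.

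Your primary argument is more structural: you observe $n(1-F_n)=nY^{(K_n)}$, rewrite $\{nY^{(K_n)}\le u\}=\{(n^{\gamma-1}K_n)^{1/\gamma}\le (n^{\gamma-1}\kappa_{\bs X}(u))^{1/\gamma}\}$ with $\kappa_{\bs X}(u)=|\{i:X_i\ge 1-u/n\}|$, note that the $\bs X$-measurable threshold $(n^{\gamma-1}\kappa_{\bs X}(u))^{1/\gamma}\to\alpha_1^{1/\gamma}u$ in $\mathtt P$-probability, and then pass the quenched limit of Theorem~\ref{th:I_n}(ii) through this moving threshold using continuity of the Gamma cdf. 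This is clean and makes the relation between the two theorems transparent; it also avoids redoing the slab discretisation. The price is the converging-together step for quenched cdf's at a random ($\bs X$-measurable) level, which you correctly identify as the only genuine point to check; it goes through because the threshold is deterministic given $\bs X$ and the limiting law is atomless. One minor remark: you claim the order-statistics asymptotic $\mathtt P$-almost surely, but only convergence in $\mathtt P$-probability is needed (and is what the paper establishes via Chebyshev for the counts $N_k(n)$); this does not affect the argument.
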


Finally,   we have very precise results about the asymptotic behaviour of 
the size of the condensate in the case of intermediate symmetry-breaking. 
We define random variables
\[\nu_n:=\frac1n \sum_{i=1}^n \frac{X_i \Phi'(X_i)} {\Phi(X_i)},\]
and note that $\mathtt E \nu_n=\rho^\star$. 
We shall see that the  first order estimate of $Q_n^{(1)}$ given the disorder is $m-\nu_n n$,
which divided by~$n$ converges in $\mathtt P$-probability to~$\rho-\rho^\star$.  The following theorem describes the fluctuations  of \smash{$Q_n^{_{(1)}}$} 
around the value~$m-\nu_n n$.
\medskip

\begin{thm}[Quenched fluctuations of the condensate]\label{thm:fluctuations}
Assume that $\gamma< 1$.
\begin{enumerate}[(i)]
\item If $2<\beta+\gamma<3$ and $\rho>\rho^{\star}$, let $\kappa = \frac{1}{\beta+\gamma-1}$. Then,
\[\frac{Q_n^{(1)}-m+\nu_n n}{n^{\kappa}} \to W_{\kappa}\]
in quenched distribution, where $W_{\kappa}$ is a $\nicefrac1\kappa$-stable random variable.

\item If $\beta+\gamma\geq 3$ and $\rho>\rho^{\star}$, then
\[\frac{Q_n^{(1)}-m+\nu_n n}{\sqrt n} \to W\]
in quenched distribution, where $W$ is a
normal random variable.
\end{enumerate}
\end{thm}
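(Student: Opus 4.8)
My plan is to transfer everything to the \emph{grand-canonical ensemble}: given $\bs X$, let $(Q_i)_{i\ge1}$ be independent with $P_{\bs X}(Q_i=k)=X_i^kp_k/\Phi(X_i)$, so that $\mathtt E[Q_i\st X_i]=X_i\Phi'(X_i)/\Phi(X_i)$ and hence $\sum_{i=1}^n\mathtt E[Q_i\st X_i]=\nu_n n$, while conditioning $(Q_1,\dots,Q_n)$ on $\{Q_1+\cdots+Q_n=m\}$ returns the stationary law~\eqref{eq:stat_dist}. On the condensation event of Theorem~\ref{th:condensation} the index $I_n$ is well defined, $\max_{i\ne I_n}Q_i=o(n)$, and $\sum_{i\ne I_n}Q_i=m-Q_n^{(1)}$; subtracting $\nu_n n=\sum_{i\le n}\mathtt E[Q_i\st X_i]$ yields the exact identity
\[
Q_n^{(1)}-m+\nu_n n\;=\;-\sum_{i\ne I_n}\bigl(Q_i-\mathtt E[Q_i\st X_i]\bigr)\;+\;\mathtt E\bigl[Q_{I_n}\st X_{I_n}\bigr].
\]
By Theorem~\ref{th:gamma} we have $1-X_{I_n}=\Theta(1/n)$ in $\mathtt P$-probability, whence a short Laplace estimate with~\eqref{eq:rvp} gives $\mathtt E[Q_{I_n}\st X_{I_n}]=O\bigl(n^{(2-\beta)_+}\bigr)$ with $(2-\beta)_+(\beta+\gamma-1)<1$ throughout the relevant parameter range, so this term is $o(n^{\kappa})$ (and $O(1)=o(\sqrt n)$ in part~(ii), where $\beta>2$). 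Thus the whole statement reduces to the weak limit, under the quenched law conditioned on $\bs X$ and on $I_n$, of the centred bulk sum $\sum_{i\ne I_n}\bigl(Q_i-\mathtt E[Q_i\st X_i]\bigr)$ at scale $n^{\kappa}$ in part~(i) and $\sqrt n$ in part~(ii).

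First I would establish the tail of a single grand-canonical occupancy after averaging out the disorder. Writing $\mathtt P\otimes P_{\bs X}(Q_1\ge k)=\int_{[0,1]}\Phi(x)^{-1}\sum_{j\ge k}x^jp_j\,\mu(dx)$ and splitting according to whether $1-x\lesssim1/k$, a Laplace/Tauberian computation from~\eqref{eq:rvmu} and~\eqref{eq:rvp} gives
\[
\mathtt P\otimes P_{\bs X}(Q_1\ge k)=(\cst+o(1))\,k^{-(\beta+\gamma-1)},\qquad k\to\infty,
\]
with an explicit constant. Hence the disorder-averaged occupancy is regularly varying of index $\beta+\gamma-1$: in part~(i) ($2<\beta+\gamma<3$) it has finite mean, infinite variance, and lies in the domain of attraction of a $(\beta+\gamma-1)$-stable law with norming $n^{\kappa}$; in part~(ii) ($\beta+\gamma>3$) it has finite variance. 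I would also record the quenched refinements needed later: $\mathtt P$-almost surely $\sum_{i\le n}P_{\bs X}(Q_i\ge t\,n^{\kappa})\to\cst\,t^{-1/\kappa}$ for every $t>0$ in part~(i), and $\tfrac1n\sum_{i\le n}\Var(Q_i\st X_i)\to\mathtt E\,\Var(Q\st X)\in(0,\infty)$ in part~(ii). These follow from laws of large numbers for i.i.d.\ functionals of $\bs X$, the key second moment being $\mathtt E\bigl[P_{\bs X}(Q\ge t\,n^{\kappa})^2\bigr]=O\bigl(n^{-(2\beta+\gamma-2)/(\beta+\gamma-1)}\bigr)$, so that the variance of the sum above is $O\bigl(n^{(1-\beta)/(\beta+\gamma-1)}\bigr)=o(1)$.

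The crux of the argument, and the step I expect to be hardest, is to remove the conditioning on $\{\sum_iQ_i=m\}$ and on the value of $I_n$. Decomposing over $\{I_n=j\}$ and factoring out the variable at site $j$, the quenched law of $(Q_i)_{i\ne j}$ given $\{I_n=j\}$ is the grand-canonical law on the remaining $n-1$ sites, \emph{tilted} by $X_j^{\,m-T}p_{m-T}$ with $T:=\sum_{i\ne j}Q_i$, and \emph{restricted} to $\{\max_{i\ne j}Q_i<m-T\}$. On the configurations carrying the mass $m-T=(\rho-\rho^{\star})n+o(n)$; since $1-X_j=\Theta(1/n)$ the tilt equals $\exp\!\bigl(-(1-X_j)(m-T)\bigr)\,\alpha_2(m-T)^{-\beta}(1+o(1))$, which varies only on scale $n$ and is therefore asymptotically constant across the $\Theta(n^{\kappa})$, resp.\ $\Theta(\sqrt n)$, window in which $T-\nu_n n$ fluctuates; and the restriction discards only configurations with a bulk site of size $\Theta(n)$, whose relative mass is $O(n^{2-\beta-\gamma})=o(1)$ by a union bound and the tail estimate (using $\beta+\gamma>2$). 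Making this rigorous with explicit error terms requires a \emph{quenched local limit theorem} for $T$ under the grand-canonical law---a stable local limit theorem when $2<\beta+\gamma<3$, a Gaussian one when $\beta+\gamma>3$, valid for $\mathtt P$-a.e.\ $\bs X$, for sums of independent but non-identically distributed lattice variables---uniformly over the $\Theta(n^{1-\gamma})$ candidate sites $j$ for the condensate. This local-CLT analysis, which adapts the annealed, disorder-free argument of Janson~\cite{Janson12} and also enters the proofs of Theorems~\ref{th:condensation}--\ref{th:gamma}, is where the disorder must genuinely be fought.

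Granting these steps, I would conclude by a classical triangular-array limit theorem applied, for $\mathtt P$-a.e.\ $\bs X$, to the independent centred summands $Q_i-\mathtt E[Q_i\st X_i]$, $i\ne I_n$, which are bounded below. In part~(i) the tail condition is exactly the first quenched refinement above, the left tail of the sum is negligible (bounded-below increments), and the largest summand divided by $n^{\kappa}$ tends to $0$; hence the rescaled sum converges to a $1/\kappa$-stable law, and $Q_n^{(1)}-m+\nu_n n=-(\text{this sum})+o(n^{\kappa})$ converges to its reflection $W_{\kappa}$, which is spectrally negative (the large fluctuations of $Q_n^{(1)}$ are downward and come from a bulk site making a jump of order $n^{\kappa}$). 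In part~(ii) the second quenched refinement supplies the limiting variance, while the finiteness of a moment of order slightly above $2$ (guaranteed by $\beta+\gamma>3$) gives a Lindeberg condition, and the central limit theorem yields $W\sim\mathcal N\bigl(0,\mathtt E\,\Var(Q\st X)\bigr)$. The boundary case $\beta+\gamma=3$, where the conditional variances of the summands are only slowly divergent ($\Var(Q\st X)$ has tail index $1$), I would handle separately via a truncation argument.
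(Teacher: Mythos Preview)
Your strategy is sound in outline and would work, but you are setting yourself up for far more work than the paper does. The identity $Q_n^{(1)}-m+\nu_n n=-\sum_{i\ne I_n}(Q_i-\mathbb E_{\bs X}Q_i)+\mathbb E_{\bs X}Q_{I_n}$ is correct, and your handling of the remainder term and of the triangular-array limits (which is essentially the content of Proposition~\ref{prop:TCL}) is fine. The divergence is at the step you flag as the crux: you propose to control the conditional law of the bulk by proving a \emph{quenched local limit theorem} for $\sum_{j\ne i}Q_j$, uniformly over the $\Theta(n^{1-\gamma})$ candidate condensate sites. This would be a substantial technical result in its own right---stable and Gaussian local limits for triangular arrays of independent, non-identically distributed lattice variables with random parameters are not available off the shelf---and is not what the paper does.

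The paper bypasses local limits entirely by a ratio trick. Using the decomposition $\{S_n=m\}\sim\bigcup_i\mathcal E_{1,i}^*$ from Lemma~\ref{lem:calcul_proba_E1}, one writes the conditional CDF of $(Q_n^{(1)}-m+\nu_n n)/n^\kappa$ as a ratio whose numerator and denominator are both of the form $\sum_i\sum_{k\in\text{window}}\mathbb P_{\bs X}(Q_i=k)\,\mathbb P_{\bs X}(\sum_{j\ne i}Q_j=m-k)$. Over each $k$-window the factors $p_k$ and $X_i^k$ are constant up to $1+o(1)$, so the inner sum collapses to $X_i^{(\rho_n-\nu_n\pm\delta_n)n}\Phi(X_i)^{-1}\cdot\mathbb P_{\bs X}(\sum_{j\ne i}Q_j\in[\text{window}])$. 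After absorbing the single term $Q_i$ this last probability becomes $\mathbb P_{\bs X}(\sum_{j=1}^n Q_j\in[\text{window}'])$, which is \emph{independent of~$i$} and therefore cancels between numerator and denominator. What survives is the ratio of $\sum_i X_i^{(\rho_n-\nu_n\mp\delta_n)n}/\Phi(X_i)$ to the same sum with $\pm\delta_n$, which is $1+o(1)$ by Lemma~\ref{lem:approx_sum}$(ii)$ when $\gamma<1$. The net result is
\[
\mathbb P_{\bs X}\Big(\tfrac{Q_n^{(1)}-m+\nu_n n}{n^\kappa}\le u\ \Big|\ S_n=m\Big)\approx(1+o(1))\,\mathbb P_{\bs X}\Big(\tfrac{\nu_n n-\sum_{j}Q_j}{n^\kappa}\le u\pm u_n\Big),
\]
and one finishes with Proposition~\ref{prop:TCL}. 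The point is that only \emph{interval} probabilities for $\sum_j Q_j$ are ever needed, never point probabilities; your route pays for a local limit theorem whose output you would immediately integrate away.
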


\begin{rmq}
Note that the \emph{quenched} fluctuation result gives information on the size of the condensate for fixed instances of the disorder and is much more subtle than the  \emph{annealed} fluctuation results that would allow averaging over the disorder. Annealed fluctuations are centred around $(\rho-\rho^\star)n$ and hold without the restriction $\gamma< 1$, the distinction of the normal and anomalous regime persists in this situation.
\end{rmq}
\pagebreak[3]

\begin{figure}[ht]
\begin{center}
\includegraphics[width=.3\textwidth]{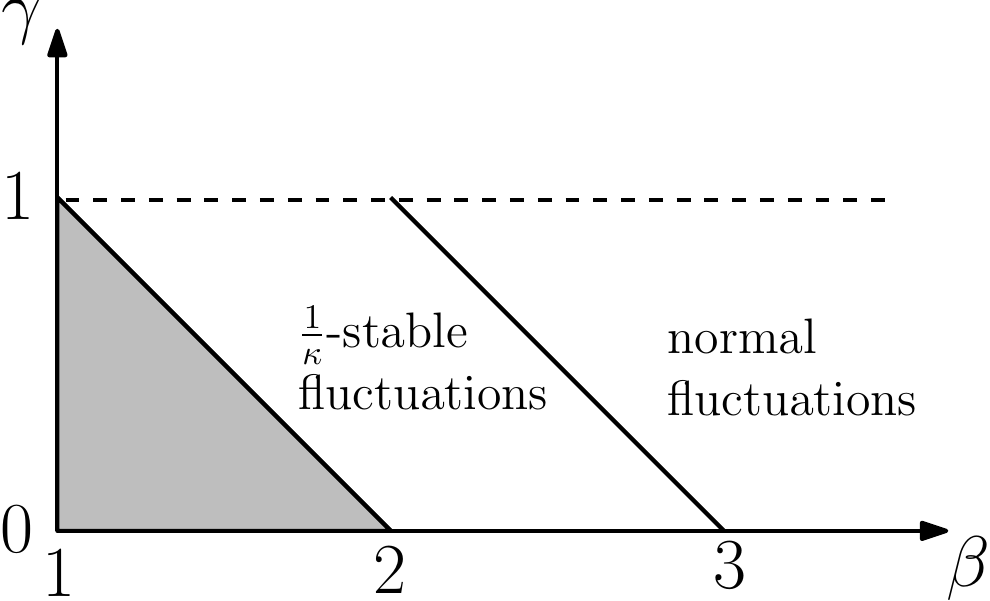}
\end{center}
\caption{This phase diagram shows the fluctuations of the size of the condensate according to the values of the two parameters $\beta$ and $\gamma$.}
\label{fig:fluctuations}
\end{figure}

\vspace{\baselineskip}

\pagebreak[3]

The following four sections are devoted to the proofs of our main theorems. Section~\ref{sec:LLN} presents the grand canonical framework used in our proofs. It contains a fairly standard technical proofs of a central limit theorem for independent random variables that may be skipped on first reading.
Section~\ref{sec:condensation} contains the proof of condensation, i.e. of Theorem~\ref{th:condensation}.
Section~\ref{sec:gammas} is devoted to intermediate symmetry-breaking and contains the proofs of Theorems~\ref{th:I_n} and~\ref{th:gamma}. Section~\ref{sec:fluctuations} deals with fluctuations,  this is where Theorem~\ref{thm:fluctuations} is proved.
We list some interesting open problems in Section~7, and in Appendix~\ref{app:X} we collect general results on the limit behaviour 
of the fitnesses, which are used throughout the paper.  As results on i.i.d. random variables regularly varying near their essential 
supremum are difficult to find in the literature, this may be of independent interest.
\bigskip

\pagebreak[3]

\section{The grand canonical ensemble}\label{sec:LLN}

Given the sequence  $X_1, X_2, \ldots$  of random variables with distribution~$\mu$ we now define
another model,  the \emph{grand canonical ensemble}, as the sequence $Q_1, Q_2, \ldots$ of conditionally 
independent random variables with the law of $Q_i$ given by
\begin{equation}
\label{gd-can law}
\mathbb{P}_{\bs X}(Q_i=k) = \frac{p_k X_i^k}{\Phi(X_i)}.
\end{equation}
Given positive integers $n,m$ we can recover $P_{m,n}$ as the law of 
$(Q_1, \ldots, Q_n, X_1, \ldots, X_n)$ conditioned on the event $\{Q_1+\cdots +Q_n = m\}$.
In this framework the random variables $\nu_n$ can be described as
\[\nu_n =\frac1n \sum_{i=1}^n \mathbb E_{\bs X} Q_i.\]
We now show that the sequence $(\nu_n)_{n\in\N}$ satisfies a law of large numbers.

\begin{lem}[Natural density]\label{lem:LLN_nu}
If $\beta+\gamma>2$, then ${\mathtt P}$-almost surely $\nu_n \to \rho^{\star}<\infty$.
\end{lem}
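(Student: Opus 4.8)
The plan is to recognise this as an instance of the strong law of large numbers. Writing $g(x):=\frac{x\Phi'(x)}{\Phi(x)}$, the random variables $g(X_1),g(X_2),\dots$ are i.i.d.\ (the $X_i$ being i.i.d.\ with law $\mu$, and $\mu(\{1\})=0$ by \eqref{eq:rvmu}), nonnegative and $\mathtt P$-a.s.\ finite, with $\nu_n=\frac1n\sum_{i=1}^n g(X_i)$ and $\mathtt E[g(X_1)]=\int g\,d\mu=\rho^{\star}$. So Kolmogorov's strong law gives $\nu_n\to\rho^{\star}$ $\mathtt P$-almost surely \emph{as soon as} $\rho^{\star}<\infty$. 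Hence the entire content of the lemma — and the only place the hypothesis $\beta+\gamma>2$ enters — is the finiteness of $\rho^{\star}$, and that is what I would focus on.

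To bound $\rho^{\star}$, I would first note that $\Phi(x)\geq\Phi(0)=p_0>0$ on $[0,1]$ and that $g$ is continuous, hence bounded, on each interval $[0,1-\delta]$; thus it suffices to control $\int_{1-\delta}^1\Phi'(x)\,\mu(dx)$ for some small $\delta>0$. The behaviour of $\Phi'$ near $1$ is governed by the tail $p_k\sim\alpha_2 k^{-\beta}$: since $p_k\leq\cst\,k^{-\beta}$, one has $\Phi'(x)\leq\cst\sum_{k\geq1}k^{1-\beta}x^{k-1}$, and comparing this sum with $\int_0^\infty t^{1-\beta}\mathtt{e}^{-t(1-x)}\,dt=\Gamma(2-\beta)(1-x)^{\beta-2}$ (legitimate because the integrand is decreasing for $\beta>1$), together with the identity $\sum_{k\geq1}k^{-1}x^k=-\log(1-x)$ in the borderline case, yields, as $x\uparrow1$: $\Phi'(x)=O\big((1-x)^{\beta-2}\big)$ if $1<\beta<2$; $\Phi'(x)=O\big(\log\tfrac1{1-x}\big)$ if $\beta=2$; and $\Phi'(x)\to\Phi'(1^-)=\sum_k kp_k<\infty$ if $\beta>2$ (alternatively, all three are immediate from Karamata's Abelian theorem for power series). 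In the regime $\beta>2$, $g$ is then bounded on all of $[0,1]$ and $\rho^{\star}<\infty$ follows at once.

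It remains, in the regimes $1<\beta\leq2$, to integrate these bounds against $\mu$ near its right endpoint. Setting $\bar\mu(t):=\mu([1-t,1])$, which satisfies $\bar\mu(t)\sim\alpha_1 t^{\gamma}$ as $t\downarrow0$ by \eqref{eq:rvmu}, the substitution $t=1-x$ and an integration by parts turn $\int_{1-\delta}^1(1-x)^{\beta-2}\mu(dx)$ into a finite boundary term plus a constant times $\int_0^\delta t^{\beta-3}\bar\mu(t)\,dt$ (when $1<\beta<2$), and $\int_{1-\delta}^1\log\tfrac1{1-x}\,\mu(dx)$ into a finite term plus a constant times $\int_0^\delta t^{-1}\bar\mu(t)\,dt$ (when $\beta=2$). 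Because $\bar\mu(t)\sim\alpha_1 t^{\gamma}$, the integrand in the first case is $\sim\alpha_1 t^{\beta+\gamma-3}$, integrable at $0$ exactly when $\beta+\gamma-3>-1$, i.e.\ $\beta+\gamma>2$; in the second case it is $\sim\alpha_1 t^{\gamma-1}$, integrable for every $\gamma>0$ (and then $\beta+\gamma=2+\gamma>2$ holds automatically). In all cases $\rho^{\star}<\infty$, and the strong law concludes.

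The only point demanding genuine care is matching the blow-up rate of $\Phi'$ at $1$ — the exponent $\beta-2$, or the logarithm at $\beta=2$ — against the regularly varying mass $\bar\mu(t)\sim\alpha_1 t^{\gamma}$ that $\mu$ places near $1$: the product is integrable precisely under $\beta+\gamma>2$, so this hypothesis is not merely sufficient but sharp for $\rho^{\star}<\infty$. Everything else (the Abelian estimate for $\Phi'$, the Fubini/integration-by-parts bookkeeping, and the appeal to Kolmogorov's theorem) is routine.
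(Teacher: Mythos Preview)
Your proof is correct and follows essentially the same approach as the paper's: both reduce to showing $\mathtt E[g(X)]<\infty$ via the asymptotics $\Phi'(x)=\Theta\big((1-x)^{\beta-2}\big)$ (or $-\log(1-x)$ at $\beta=2$), splitting into the three cases $\beta>2$, $\beta=2$, $1<\beta<2$, and then invoking Kolmogorov's strong law. The only difference is in the final bookkeeping for integrability: you integrate $(1-x)^{\beta-2}$ against $\mu$ directly via integration by parts and $\bar\mu(t)\sim\alpha_1 t^\gamma$, whereas the paper inverts $g$ and bounds the tail $\mathtt P(g(X)>u)=O\big(u^{-\gamma/(2-\beta)}\big)$---two equivalent ways of exploiting the same estimate.
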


\begin{proof} 
Denote \smash{$G(x):= \frac{x\Phi'(x)}{\Phi(x)}$}, for all $x\in [0,1]$. We show that $G(X)$ is integrable, so that the result follows from an application
of Kolmogorov's law of large numbers. In the case $\beta > 2$, we have that $G$ is bounded and hence integrable. In the case $2-\gamma<\beta< 2$, we get 
$\Phi'(x)= \Theta\left((1-x)^{\beta-2}\right)$ (as a consequence of~\cite[Theorem~VI.3]{FS09}), 
and hence $G(x) = \Theta((1-x)^{\beta-2})$, as $x\uparrow 1$. 
Letting $G^{-1}(u)=\inf\{x \colon G(x)>u\}$, we have
$\mathtt P(G(X)>u) \leq \mathtt P(X  \geq  G^{-1}(u))$.
Observe that $G^{-1}(u)\uparrow 1$,  as $u\uparrow\infty$, which tells us in view of~\eqref{eq:rvmu} that
$ \mathtt P(X  \geq  G^{-1}(u)) \sim \alpha_1 (1-G^{-1}(u))^{\gamma},$ as~$u\uparrow\infty$.
As \smash{$1-G^{-1}(u)= \Theta(u^{\frac{1}{\beta-2}})$}, we obtain
\smash{$\mathtt P(G(X)>u)={O}(u^{-\frac{\gamma}{2-\beta}})$}.
Integrability follows as 
\smash{$\frac{\gamma}{2-\beta} > 1$}.  Finally, in the case $\beta = 2$, we have $\Phi'(x)\sim -\log (1-x)$ and 
integrability follows using a similar argument as above.
\end{proof}

Limit theorems for the independent (but not identically distributed) random variables
\smash{$(Q_i)_{i\geq 1}$ under $\mathbb P_{\bs X}$} are nontrivial, but can be obtained by
classical methods. We abbreviate the partial sums as 
$$S_n:=\sum_{i=1}^n Q_i.$$

\begin{lem}[Grand canonical law of large numbers]\label{lem:LLN_Q}
If $\beta+\gamma>2$, then $\frac1n\,S_n-\nu_n \to 0$
in $\mathbb P_{\bs X}$-probability.
\end{lem}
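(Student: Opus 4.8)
The plan is to establish the weak law of large numbers for the (conditionally) independent random variables $(Q_i)_{i\geq 1}$ under $\mathbb{P}_{\bs X}$ by a standard truncation argument, made uniform in the disorder, combined with the almost-sure control on the second moments already encoded in Lemma~\ref{lem:LLN_nu}. Since $\frac1n S_n - \nu_n = \frac1n\sum_{i=1}^n (Q_i - \mathbb{E}_{\bs X}Q_i)$, it suffices to show that the right-hand side tends to $0$ in $\mathbb{P}_{\bs X}$-probability for $\mathtt{P}$-almost every realisation of $\bs X$. First I would record the key tail estimate: from~\eqref{eq:rvp} and~\eqref{gd-can law} one has $\mathbb{P}_{\bs X}(Q_i = k) = p_k X_i^k/\Phi(X_i)$, so $\mathbb{P}_{\bs X}(Q_i \geq k)$ is controlled, roughly, by $\cst\, k^{-\beta+1} X_i^k / \Phi(X_i)$ up to the geometric cutoff at scale $1/(1-X_i)$; this is the quantitative input that lets one bound truncated first and second moments of $Q_i$ in terms of $1-X_i$.

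Next I would implement the truncation at a level $a_n$ growing like a small power of $n$ (e.g. $a_n = n^{\theta}$ for a suitable $\theta\in(0,1)$). Write $Q_i = Q_i \mathbf{1}_{\{Q_i \leq a_n\}} + Q_i\mathbf{1}_{\{Q_i > a_n\}}$. For the large part, I would use the union bound $\mathbb{P}_{\bs X}(\exists i\leq n: Q_i > a_n) \leq \sum_{i=1}^n \mathbb{P}_{\bs X}(Q_i > a_n)$ and show, using the tail estimate above together with the integrability of $G(X) = x\Phi'(x)/\Phi(x)$ established in Lemma~\ref{lem:LLN_nu} (which via Appendix~\ref{app:X} gives that $\frac1n\sum_{i=1}^n (1-X_i)^{-1}$, or the relevant truncated version, is of controlled order), that this probability is $o(1)$; simultaneously one checks $\frac1n\sum_{i=1}^n \mathbb{E}_{\bs X}[Q_i\mathbf{1}_{\{Q_i>a_n\}}] \to 0$ so that replacing $Q_i$ by its truncation does not move the centred mean. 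For the truncated, centred variables I would apply Chebyshev: $\mathbb{P}_{\bs X}(|\frac1n\sum (Q_i^{\mathrm{tr}} - \mathbb{E}_{\bs X}Q_i^{\mathrm{tr}})| > \varepsilon) \leq \frac{1}{\varepsilon^2 n^2}\sum_{i=1}^n \Var_{\bs X}(Q_i^{\mathrm{tr}})$, and bound each truncated variance by $a_n \cdot \mathbb{E}_{\bs X}Q_i \leq a_n\, \cst\, (1-X_i)^{-1}$ (or by $a_n^{2}\mathbb{P}_{\bs X}(Q_i\geq 1) $, whichever is sharper in the relevant range of $\beta$). The almost-sure bound on $\frac1n\sum_i (1-X_i)^{-\eta}$ for the appropriate exponent $\eta$, coming from regular variation of $\mu$ near $1$ as in the appendix, then forces the sum to be $o(n^2/a_n)$ or similar, so choosing $\theta$ small enough closes the argument.

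The main obstacle I anticipate is the interaction between the two sources of heavy tails: when $\beta$ is close to $2$ the variables $Q_i$ already have heavy tails on the scale $1/(1-X_i)$, and when $\gamma$ is small the disorder $X_i$ frequently comes close to $1$, so $(1-X_i)^{-1}$ is itself barely (or not) integrable — indeed under $\beta+\gamma>2$ one is exactly on the boundary where $\nu_n$ converges but second moments of individual $Q_i$ may fail to be $\mathtt{P}$-integrable. The truncation level $a_n$ must therefore be tuned carefully against the almost-sure growth of the partial sums of $(1-X_i)^{-\eta}$, and one should expect to split into the cases $\beta>2$, $\beta=2$, and $2-\gamma<\beta<2$ exactly as in the proof of Lemma~\ref{lem:LLN_nu}, using in the last case the estimate $\mathbb{E}_{\bs X}[Q_i^2\wedge a_n^2] = \Theta((1-X_i)^{\beta-3}\wedge a_n^{2})$ and the appendix bound on $\frac1n\sum_i (1-X_{(i)})^{\beta-3}$ truncated at the right scale. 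Once the bookkeeping on exponents is done, convergence in $\mathbb{P}_{\bs X}$-probability follows, and a Borel–Cantelli or subsequence argument upgrades it to hold for $\mathtt{P}$-almost every~$\bs X$ as stated.
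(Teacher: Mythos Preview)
Your truncation-plus-Chebyshev route is correct and will work, but it is a genuinely different argument from the one the paper uses. In the paper Lemma~\ref{lem:LLN_Q} is not proved directly at all: it is obtained as an immediate corollary of the grand canonical central limit theorem, Proposition~\ref{prop:TCL}, which shows that $(S_n-\nu_n n)/n^\kappa$ converges in quenched distribution with $\kappa=\max\{\tfrac12,\tfrac1{\beta+\gamma-1}\}<1$; dividing by $n$ instead of $n^\kappa$ gives the law of large numbers for free. So the paper leverages a result it needs anyway (for Theorem~\ref{thm:fluctuations}), while you give a self-contained, more elementary proof that avoids the stable-limit machinery of Gnedenko--Kolmogorov.

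Your plan can in fact be streamlined. Once you truncate at any level $a_n\to\infty$ with $a_n=o(n)$, the bound $\Var_{\bs X}(Q_i^{\mathrm{tr}})\le a_n\,\mathbb E_{\bs X}Q_i=a_n\,G(X_i)$ and Lemma~\ref{lem:LLN_nu} give $\tfrac1{n^2}\sum_{i}\Var_{\bs X}(Q_i^{\mathrm{tr}})\le \tfrac{a_n}{n}\,\nu_n\to0$; and the mean shift satisfies $\tfrac1n\sum_i\mathbb E_{\bs X}[Q_i\mathbf 1_{\{Q_i>a_n\}}]\le \cst\sum_{k>a_n}k^{1-\beta}\tfrac1n\sum_i X_i^k$, which by Lemma~\ref{lem:approx_sum}$(ii)$ is of order $\sum_{k>a_n}k^{1-\beta-\gamma}=O(a_n^{2-\beta-\gamma})\to0$. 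Hence the anticipated case split on $\beta$ and the separate control of $\mathbb P_{\bs X}(\exists i:\,Q_i>a_n)$ are not actually needed, and no Borel--Cantelli upgrade is required since the paper only uses the conclusion with high $\mathtt P$-probability. Your approach thus buys independence from the CLT at essentially no extra cost; the paper's approach buys brevity given that Proposition~\ref{prop:TCL} is proved anyway.
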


Combining Lemmas~\ref{lem:LLN_nu} and~\ref{lem:LLN_Q} we see that, if $\rho>\rho^\star$, the probability
$\mathbb P_{\bs X}(S_n= m)$ is going to zero as $n\to\infty$. We shall
see later\footnote{See, in particular, Lemma~\ref{lem:calcul_proba_E1}.} that, 
with high $\mathtt P$-probability, this decay is polynomial if $\gamma\leq 1$, 
but stretched exponential if $\gamma>1$.

\bigskip

The law of large numbers, Lemma~\ref{lem:LLN_Q}, follows from the central limit theorem for the grand canonical ensemble, which we now state. 
The central limit theorem for the grand canonical ensemble prepares the proof
of Theorem~\ref{thm:fluctuations} for the canonical ensemble.  The proof is a direct application of classical techniques for independent (but not identically distributed) random variables, and may be omitted on first reading.

\begin{prop}[Grand canonical central limit theorem]\label{prop:TCL}
\ \\[-5mm]
\begin{enumerate}[(i)]
\item If $2< \beta+\gamma < 3$,  let $\kappa = \frac{1}{\beta+\gamma-1}$. Then, in quenched distribution\footnote{To define convergence in quenched distribution in the grand-canonical framework, one has to replace $P_{\bs X}$ by $\mathbb P_{\bs X}$ in~\eqref{iqddef}.},
\[\frac{\sum_{i=1}^n Q_i - \nu_n n}{n^{\kappa}} \to W_\kappa,\]
where $W_\kappa$ is a $\nicefrac1\kappa$-stable random variable.
\item If $\beta+\gamma \geq 3$, 
then, in quenched distribution,
\[\frac{\sum_{i=1}^n Q_i - \nu_n n}{\sqrt n} \to W,\]
where $W$ is a Gaussian random variable.
\end{enumerate}
\end{prop}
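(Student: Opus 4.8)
The plan is to verify a triangular-array central limit theorem for the row $(Q_1,\dots,Q_n)$ under $\mathbb P_{\bs X}$, treating the two regimes by the appropriate classical criterion. First I would record the tail behaviour of a single $Q_i$ under $\mathbb P_{\bs X}$: from \eqref{gd-can law} and \eqref{eq:rvp}, for fixed fitness $x<1$ one has $\mathbb P_{\bs X}(Q_i=k)=p_k x^k/\Phi(x)$, so $Q_i$ has exponential tails and all moments; the heavy-tailed behaviour only emerges when $x$ is close to $1$, i.e. for sites of near-extremal fitness. The key computation is the asymptotics, as $x\uparrow 1$, of the centred moments $\mathbb E_{\bs X}[(Q_i-\mathbb E_{\bs X}Q_i)^2]$ and of the truncated tail sums $\sum_{k> t}k\,p_k x^k/\Phi(x)$. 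Using $\Phi(x)\asymp(1-x)^{\beta-1}$ for $1<\beta<2$ (and $\Phi(x)\to\Phi(1)>0$ for $\beta>2$, with a logarithmic correction at $\beta=2$), together with the regular variation of $(p_k)$, one finds that the variance of $Q_i$ grows like a power of $(1-x)^{-1}$ when $\beta<3$ and stays bounded when $\beta>3$; more precisely $\mathbb E_{\bs X}[Q_i^2]=\Theta\big((1-x)^{-(3-\beta)}\big)$ for $2<\beta<3$.

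The second ingredient is to transfer these per-site estimates into sums over $i=1,\dots,n$ using the order statistics of the fitnesses. By the regular variation assumption \eqref{eq:rvmu}, the $k$-th largest fitness $X_n^{(k)}$ satisfies $1-X_n^{(k)}\approx (k/(\alpha_1 n))^{1/\gamma}$ (this is exactly the content of the appendix results on fitnesses regularly varying near their essential supremum), so $\sum_{i=1}^n f(X_i)$ behaves like $\sum_{k\geq 1} f\big(1-(k/(\alpha_1 n))^{1/\gamma}\big)$ whenever $f$ is regularly varying at $1$. Applying this with $f(x)=\mathbb E_{\bs X}[(Q-\mathbb E_{\bs X}Q)^2\mid X=x]$, whose exponent near $1$ is $\beta-3$, gives $\sum_i \Var_{\bs X}(Q_i)\asymp n \cdot n^{(3-\beta)/\gamma-1}\cdot(\text{const})$ when the sum is dominated by the top sites, i.e. when $(3-\beta)/\gamma>1$, equivalently $\beta+\gamma<3$; this is precisely the boundary in the statement, and the exponent works out to $2\kappa=2/(\beta+\gamma-1)$. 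In the regime $\beta+\gamma\geq 3$ the same sum is $\Theta(n)$ (dominated by the bulk), which produces the $\sqrt n$ normalisation.

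For part (ii), with the variance normalisation $s_n^2:=\sum_i\Var_{\bs X}(Q_i)=\Theta(n)$ in hand, I would verify the Lindeberg condition: the contribution of each site is $O_\P(1)$ in variance except for the $O_\P(1)$ many top sites, and for those one checks that $\varepsilon s_n\gg$ their typical size, so the Lindeberg sum vanishes; hence the Lindeberg–Feller CLT applies $\mathtt P$-a.s.\ (and in particular in quenched distribution). For part (i), since the array is in the domain of attraction of a $1/\kappa$-stable law ($1/\kappa=\beta+\gamma-1\in(1,2)$), I would instead check the classical stable-domain conditions for triangular arrays (e.g. via characteristic functions, as in Feller): show that $\sum_i \mathbb P_{\bs X}(|Q_i-\mathbb E_{\bs X}Q_i|>n^\kappa t)$ converges to the stable Lévy measure of $(t,\infty)$, and that the truncated second moments scale correctly — both reduce, via the order-statistics transfer above, to a single regularly varying sum over $k$ whose limit is an explicit integral giving the stable characteristic exponent. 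The main obstacle is the bookkeeping in this transfer step: one must control the sum over the top $O(\log n)$ or so sites uniformly in the (random) fitness values, showing that it is well-approximated by the deterministic integral $\int_0^\infty f\big(1-(u/\alpha_1)^{1/\gamma}\big)\,du$ with negligible fluctuation, and simultaneously check that the remaining bulk sites contribute only to the centering (already absorbed into $\nu_n n$) and a lower-order variance. Handling the borderline cases $\beta=2$ and $\beta+\gamma=3$ (logarithmic corrections) will require a little extra care but no new idea. Once the characteristic-function estimates are established pointwise in $\bs X$ on a set of full $\mathtt P$-measure, convergence in quenched distribution as defined in \eqref{iqddef} follows immediately, and Lemma \ref{lem:LLN_Q} is the special case obtained by dividing by $n$ (a faster-than-$n$ normalisation suffices since $\kappa<1$).
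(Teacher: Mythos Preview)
Your overall strategy---Lindeberg for part~(ii) and the Gnedenko--Kolmogorov stable-domain conditions (tail convergence plus truncated variance) for part~(i)---is exactly what the paper does, and the paper's reference is \cite[\S25, Theorem~2]{GK}. Two errors in your preliminary estimates need fixing, however, and one of them is not just arithmetic.

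First, $\Phi(x)\to\Phi(1)\in(0,\infty)$ as $x\uparrow1$ for every $\beta>1$, since $\sum_k p_k<\infty$; the asymptotic $\Phi(x)\asymp(1-x)^{\beta-1}$ is wrong. What does blow up is $\Phi'(x)\asymp(1-x)^{\beta-2}$ when $\beta<2$. Second, and more importantly, your claim that $\sum_i\Var_{\bs X}(Q_i)\asymp n^{2\kappa}$ in the regime $\beta+\gamma<3$ is false: your own computation gives exponent $(3-\beta)/\gamma$, and $(3-\beta)/\gamma\neq 2/(\beta+\gamma-1)$ in general (try $\beta=2$, $\gamma=\tfrac12$). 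The point is that in the stable regime the row variance is \emph{not} the correct normalisation; each $Q_i$ has finite variance, but the array falls outside the Lindeberg class and $n^\kappa$ is determined by the tail condition $\sum_i\mathbb P_{\bs X}(Q_i>xn^\kappa)\to C_1 x^{-1/\kappa}$, not by $s_n^2$. You do state the tail condition afterwards, so the proposed proof survives, but the motivational paragraph is misleading.

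On the transfer step: your order-statistics route $\sum_i f(X_i)\approx\sum_j f\big(1-(j/\alpha_1 n)^{1/\gamma}\big)$ can be made to work, but the paper's device is cleaner. Since every quantity of interest has the form $\sum_i\sum_k a_k X_i^k/\Phi(X_i)$, the paper simply swaps the two sums and invokes Lemma~\ref{lem:approx_sum}\,$(ii)$, namely $\sum_{i=1}^n X_i^k/\Phi(X_i)\sim \alpha_1\Gamma(\gamma+1)\,n k^{-\gamma}$ for $k\ll n^{1/\gamma}$, together with parts~$(i)$ and~$(iii)$ for the ranges $k\gg n^{1/\gamma}\log n$ and $k\asymp n^{1/\gamma}$. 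This avoids having to analyse $x\mapsto\mathbb P_{\bs X}(Q>t\mid X=x)$ as a function of two variables, and delivers the correct constant $C_1=\alpha_1\alpha_2\Gamma(\gamma+1)$ directly; your cutoff heuristic (count sites with $1-X_i\lesssim 1/t$, each contributing $\asymp t^{1-\beta}$) gets the exponent right but will miss a factor coming from the transition zone unless you integrate carefully.
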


\begin{proof}[Proof of Proposition~\ref{prop:TCL}]
$(ii)$ This is a direct application of the central limit theorem for sums of independent but non identical random variables based on Lindeberg's condition, i.e.,  for all $\varepsilon>0$,
\[\lim_{n\uparrow\infty} \frac 1 n \sum_{i=1}^n 
\mathbb E \left[(Q_i-\mathbb E_{\bs X} Q_i)^2 \indi\{|Q_i-\mathbb E_{\bs X} Q_i| > \varepsilon\sqrt n\}\right]
= 0.\]
Recall that \smash{$\mathbb E_{\bs X} Q_i = \frac{X_i\Phi'(X_i)}{\Phi(X_i)} = G(X_i)$}
and that, if $\beta>2$, the function \smash{$G(x)= \frac{x\Phi'(x)}{\Phi(x)}$} 
is bounded on $[0,1]$, behaves as 
$O((1-x)^{\beta-2})$ if $\beta <2$ and as $O(-\log (1-x))$ if $\beta = 2$.
Therefore, in view of Lemma~\ref{lem:extremal_properties_X} and using that $\beta+\gamma\geq 3$ and $\beta >1$, we have
$\max_{i=1}^n G(X_i) = o(\sqrt n)$ in $\mathtt P$-probability.
Therefore, for all large enough $n$,
\[
\sum_{i=1}^n \mathbb E \left[(Q_i-\mathbb E_{\bs X} Q_i)^2 \indi\{|Q_i-\mathbb E_{\bs X} Q_i| > \varepsilon\sqrt n\}\right]
= \sum_{i=1}^n \sum_{k= \mathbb E_{\bs X} Q_i + \varepsilon\sqrt n}^{\infty}
\frac{p_k X_i^k}{\Phi(X_i)} (k-\mathbb E_{\bs X}Q_i)^2
\leq \cst. \sum_{k=\varepsilon \sqrt n}^{\infty}
k^{2-\beta}  \sum_{i=1}^n X_i^k.
\]
First note that assuming $\beta >3$ leads to
\[\frac1n \sum_{k=\varepsilon \sqrt n}^{\infty}
k^{2-\beta}  \sum_{i=1}^n X_i^k
\leq \sum_{k=\varepsilon \sqrt n}^{\infty}
k^{2-\beta} \to 0,\]
and hence Lindeberg's condition is verified. We may assume now that $\beta \leq 3$ and write
\[
\frac1n \sum_{k=\varepsilon \sqrt n}^{\infty}
k^{2-\beta}  \sum_{i=1}^n X_i^k
= \frac1n \sum_{k=\varepsilon \sqrt n}^{\frac{n^{\nicefrac1{\gamma}}}{\log n}}
k^{2-\beta}  \sum_{i=1}^n X_i^k
+ \frac1n \sum_{k=\frac{n^{\nicefrac1{\gamma}}}{\log n}}^{n^{\nicefrac1{\gamma}}\log^2 n}
k^{2-\beta}  \sum_{i=1}^n X_i^k
+ \frac1n \sum_{k=n^{\nicefrac1{\gamma}}\log^2 n}^{\infty}
k^{2-\beta}  \sum_{i=1}^n X_i^k,
\]
where the first and second term on the right are void if $\gamma>2$.
Applying Lemma~\ref{lem:approx_sum}$(ii)$ allows to bound the inner sum of the first term 
by a constant multiple of~$nk^{-\gamma}$, showing that the term tends to zero because $\beta+\gamma > 3$.
The second term is bounded from above by (we assume here that $\beta<3$, the case $\beta = 3$ 
can be treated similarly)
\[\frac1n \sum_{i=1}^n X_i^{\frac{n^{\nicefrac1{\gamma}}}{\log n}} \sum_{k=1}^{n^{\nicefrac1{\gamma}}\log^2 n} k^{2-\beta}
\sim \sfrac{\log^{\gamma} n}{n} \, n^{\frac{3-\beta}{\gamma}} \log^{2(3-\beta)} n,\]
using Lemma~\ref{lem:approx_sum}$(ii)$ applied to $s_n = \frac{n^{\nicefrac1{\gamma}}}{\log n}$.
Hence the second term also tends to zero as $n\uparrow\infty$.
Finally, the third term is, by Lemma~\ref{lem:approx_sum}$(i)$, and~\ref{lem:extremal_properties_X}, 
asymptotically bounded by
\begin{align*}
\frac1 n \, \sum_{k=n^{\nicefrac1{\gamma}}\log^2 n}^{\infty}
k^{2-\beta} \big(X_n^{(1)}\big)^k V_k^{(n)}
&\leq \cst.
\frac1 n \int_{\cst.n^{\nicefrac1{\gamma}}\log^2 n}^{\infty}
x^{2-\beta} \mathtt e^{-xn^{-\nicefrac1{\gamma}}} dx
\leq \cst.n^{\frac{3-\beta}{\gamma}-1} \int_{\log^2 n}^{\infty}
u^{2-\beta} \mathtt e^{-u} du,
\end{align*}
which also goes to zero, because $\beta+\gamma \geq 3$.
Therefore, Lindeberg's condition is verified concluding the proof of~$(ii)$.
Note that the variance of the limit normal distribution is given by $\mathtt E\  \Var_{\bs X} Q_i$.

\vspace{\baselineskip}
$(i)$ We apply the very general~\cite[\S 25, Theorem2]{GK}.
Using this it is enough to show that, asymptotically as $n\uparrow\infty$, there are
constants $C_1, C_2 \geq 0$ such that 
\begin{equation}\label{eq:cond_stable_th1}
\sum_{i=1}^n \mathbb P_{\bs X}\left(Q_i - \mathbb E_{\bs X} Q_i \geq x n^{\kappa}\right) \to \frac{C_1}{x^{\nicefrac1{\kappa}}}, \quad \text{ for all } x>0,
\end{equation}
\begin{equation}\label{eq:cond_stable_th2}
\sum_{i=1}^n \mathbb P_{\bs X}\left(Q_i - \mathbb E_{\bs X} Q_i \leq x n^{\kappa}\right) \to \frac{C_2}{|x|^{\nicefrac1{\kappa}}}, \quad \text{ for all }  x<0,
\end{equation}
\begin{equation}\label{eq:cond_stable_th3}
\lim_{\varepsilon\downarrow 0}\limsup_{n\uparrow\infty} \frac1{n^{2\kappa}}
\sum_{i=1}^n \Var_{\bs X}\big((Q_i-\mathbb E_{\bs X} Q_i) \indi\{|Q_i-\mathbb E_{\bs X} Q_i|<\varepsilon n^{\kappa}\}\big) = 0.
\end{equation}
First remark that, as above, we have  $\sup_{i=1}^n \mathbb E_{\bs X} Q_i = o(n^{\kappa})$. Hence, \eqref{eq:cond_stable_th2} is (trivially) verified with $C_2=0$.
Now recall that $p_k\sim \alpha_2 k^{-\beta}$ when $k$ tends to infinity.
Thus, for all $\varepsilon>0$ there exists an integer $k(\varepsilon)$ such that, for all $k\geq k(\varepsilon)$, we have
$p_k \approx (1\pm \varepsilon) \alpha_2 k^{-\beta}$.
Choose $n$ such that $xn^{\kappa} > k(\varepsilon)$ 
and such that $\sup_{i=1}^n \mathbb E_{\bs X} Q_i \leq x n^{\kappa}$. Then
\begin{align}
\sum_{i=1}^n \mathbb P_{\bs X}\left(Q_i - \mathbb E_{\bs X} Q_i \geq x n^{\kappa}\right)
&\approx \alpha_2 (1\pm \varepsilon) 
\sum_{i=1}^n \sum_{k\geq xn^{\kappa}+\mathbb E_{\bs X}Q_i} k^{-\beta} \frac{X_i^k}{\Phi(X_i)}\notag\\
&\approx \alpha_2 (1\pm \varepsilon) 
\sum_{i=1}^n \sum_{k\geq xn^{\kappa}} (k+\mathbb E_{\bs X}Q_i)^{-\beta} \frac{X_i^{k+\mathbb E_{\bs X}Q_i}}{\Phi(X_i)}.\label{eq:blabla}
\end{align}
To show that $(k+\mathbb E_{\bs X}Q_i)^{-\beta} \approx (1\pm \varepsilon) k^{-\beta}$ for all $k\geq xn^{\kappa}$, for all $i\in\{1, \ldots, n\}$, and large enough $n$, note that
\[k^{-\beta}\left(1+\frac{\sup_{i=1..n}\mathbb E_{\bs X}Q_i}{xn^{\kappa}}\right)^{-\beta}
\leq(k+\mathbb E_{\bs X}Q_i)^{-\beta} \leq k^{-\beta},\]
and use that  $\sup_{i=1}^n \mathbb E_{\bs X} Q_i = o(n^{\kappa})$.
For all $i\in\{1, \ldots, n\}$, we bound $X_i^{\mathbb E_{\bs X}Q_i}$ from above and below by \[X_i^{\sup_{i=1..n}\mathbb E_{\bs X}Q_i}\leq X_i^{\mathbb E_{\bs X}Q_i}\leq 1.\]
Plugging these bounds into~\eqref{eq:blabla} we get the following lower and upper bound 
 for $\sum_{i=1}^n \mathbb P_{\bs X}\left(Q_i - \mathbb E_{\bs X} Q_i \geq x n^{\kappa}\right)$
with $\sigma_n :=\sup_{i=1..n}\mathbb E_{\bs X}Q_i$ in the lower bound and $\sigma_n:=0$ in the upper bound,
\begin{align*}
&\alpha_2 (1\pm \varepsilon)^2
\sum_{i=1}^n \sum_{k\geq xn^{\kappa}} k^{-\beta} \frac{X_i^{k+\sigma_n}}{\Phi(X_i)}\\
&\hspace{1cm}\approx \alpha_2 (1\pm \varepsilon)^2 
\sum_{k\geq xn^{\kappa}} k^{-\beta} \sum_{i=1}^n \frac{X_i^{k+ \sigma_n}}{\Phi(X_i)}\\
&\hspace{1cm}\approx \alpha_2 (1\pm \varepsilon)^2 
\Bigg(\sum_{k= xn^{\kappa}}^{\frac{n^{\nicefrac1{\gamma}}}{\log n}} n k^{-\beta-\gamma} U_{k+\sigma_n}^{(n)}
+ \sum_{k=\frac{n^{\nicefrac1{\gamma}}}{\log n}}^{n^{\nicefrac1{\gamma}}\log n} k^{-\beta} \sum_{i=1}^n \frac{X_i^{k+\sigma_n}}{\Phi(X_i)}
+ \sum_{k=n^{\nicefrac1{\gamma}}\log n}^{\infty} k^{-\beta} \left(X_n^{_{(1)}}\right)^{k+\sigma_n} V_{k+\sigma_n}^{(n)}\Bigg),
\end{align*}
using Lemma~\ref{lem:approx_sum} notations.
Using that $\sigma_n=o(\nicefrac{n^{1/\gamma}}{\log n})$ it can be checked easily
that the second and third terms are $o(1)$--terms, independent of~$x$.
Thus only the first term of the above sum needs to be considered.
Note that there exists two integers $m_n, M_n \in [xn^{\kappa}, \nicefrac{n^{\nicefrac1{\gamma}}}{\log n}]$ such that
$U_{m_n}^{(n)}\leq U_k^{(n)}\leq U_{M_n}^{(n)}$, for all $n\geq 1$ and 
$\smash k\in [xn^{\kappa}, \nicefrac{n^{\nicefrac1{\gamma}}}{\log n}]$. 
In view of Lemma~\ref{lem:approx_sum}$(ii)$, we have 
\smash{$U_{M_n}^{\ssup{n}}\sim U_{m_n}^{\ssup{n}}\sim \alpha_1 \Gamma(1+\gamma)$} as $n\to\infty$.
Thus,
\[\sum_{k= xn^{\kappa}}^{\frac{n^{\nicefrac1{\gamma}}}{\log n}} n k^{-\beta-\gamma} U_{m_n}^{(n)}
\leq \sum_{k= xn^{\kappa}}^{\frac{n^{\nicefrac1{\gamma}}}{\log n}} n k^{-\beta-\gamma} U_k^{(n)}
\leq \sum_{k= xn^{\kappa}}^{\frac{n^{\nicefrac1{\gamma}}}{\log n}} n k^{-\beta-\gamma} U_{M_n}^{(n)},
\]
both bounds being then equivalent to $\alpha_1\Gamma(\gamma +1) n (xn^{\kappa})^{1-\beta-\gamma} \sim \alpha_1\Gamma(\gamma+1)x^{-\nicefrac1{\kappa}}$ when $n$ tends to infinity. We eventually get that, for all $n$ large enough,
\[\sum_{i=1}^n \mathbb P_{\bs X}\left(Q_i - \mathbb E_{\bs X} Q_i \geq x n^{\kappa}\right) \approx \frac{\alpha_1\alpha_2 \Gamma(\gamma+1) (1\pm \varepsilon)^2}{x^{\nicefrac1{\kappa}}},\]
which implies~\eqref{eq:cond_stable_th1} with $C_1:= \alpha_1\alpha_2 \Gamma(\gamma+1)$.
Finally, for all large enough $n$,
\begin{align*}
\frac1{n^{2\kappa}}
\sum_{i=1}^n \Var_{\bs X}\big((Q_i-\mathbb E_{\bs X} Q_i) & \indi\{|Q_i-\mathbb E_{\bs X} Q_i|<\varepsilon n^{\kappa}\}\big)
\leq \cst. n^{-2\kappa}
\sum_{i=1}^n \sum_{k\leq 2\varepsilon n^{\kappa}} (k-\mathbb E_{\bs X} Q_i)^2 k^{-\beta} X_i^k\\
&\leq \cst. n^{-2\kappa} 
\sum_{i=1}^n \sum_{k\leq \mathbb E_{\bs X} Q_i} (\mathbb E_{\bs X} Q_i)^2 k^{-\beta} X_i^k
+ \cst. n^{-2\kappa} 
\sum_{k=0}^{2\varepsilon n^{\kappa}} k^{2-\beta} \sum_{i=1}^n X_i^k\\
&\leq \cst. n^{-2\kappa} 
\sum_{i=1}^n G(X_i)^2 
+ \cst. n^{1-2\kappa}
\sum_{k=0}^{2\varepsilon n^{\kappa}} k^{2-\beta-\gamma},
\end{align*}
 in view of Lemma~\ref{lem:approx_sum}$(ii)$ and $(iii)$.
Recall that $G$ is bounded if $\beta>2$, has exponential tails if $\beta=2$, and has 
tails of polynomial order $-\sfrac{\gamma}{2-\beta}$ if $\beta <2$. Hence $\sum_{i=1}^n G(X_i)$ is $O(n)$ if $\gamma>2(2-\beta)$, and $O_{\PP}(n^{\nicefrac{2(2-\beta)}{\gamma}})$
otherwise. From this we derive that the first term above goes to zero as $n$ goes to infinity. Moreover,  the second term is a constant multiple of $\varepsilon^{3-\beta-\gamma}$, which
verifies~\eqref{eq:cond_stable_th3}  and completes the proof of~$(i)$.
\end{proof}

\section{The condensation effect}\label{sec:condensation}

In this section we not only prove Theorem~\ref{th:condensation} but also provide crucial information about the position of the condensate, which will enter into the proofs of our main theorems.
\medskip

We choose $\delta_n\downarrow 0$ such that
$\mathbb P_{\bs X}(|S_n- n \nu_n| \leq \sfrac12 n\delta_n) \to 1$, in $\mathtt P$-probability.
With $\kappa=\max\{\frac12, \frac1{\beta+\gamma-1}\}$ we can achieve this for a sequence satisfying
$n^{\kappa} \ll n\delta_n$.  If $1<\gamma<2$ we make the stronger assumption that
\smash{$n^{\nicefrac1\gamma}\ll n \delta_n$}.
We assume $\beta+\gamma>2$, $\rho>\rho^\star$ and fix $\varepsilon>0$ such that 
$\varepsilon<\frac{\beta+\gamma-2}{\beta+\gamma}\, (\rho-\rho^{\star})$ if $\gamma\leq 1$,
and 
$\varepsilon<\frac{\beta-1}{\beta\gamma}\, (\rho-\rho^{\star})$ if $\gamma> 1$. 
\medskip

We partition the event $\{S_n=m\}$ into four disjoint events,
\[\begin{array}{ll}
\mathcal{E}_1 &= \{S_n=m, \, \exists i\in\{1, \ldots, n\} 
\text{ such that } |Q_i - (m-\nu_n n)|\leq \delta_n n, \text{ and }\forall j\neq i, Q_j\leq \varepsilon n\},\\
\mathcal{E}_2 &= \{S_n=m, \,\exists i\neq j\in\{1, \ldots, n\} 
\text{ such that } |Q_i - (m-\nu_n n)|\leq \delta_n n \text{ and } Q_j> \varepsilon n\},\\
\mathcal{E}_3 &= \{S_n=m, \,\forall i\in\{1, \ldots, n\}, |Q_i - (m-\nu_n n)| > \delta_n n
\text{ and }\exists j\in\{1, \ldots, n\}\text{ such that }Q_j > \varepsilon n\},\\
\mathcal{E}_4 &= \{S_n=m \mbox{ and, for all }i\in\{1, \ldots, n\}, Q_i\leq \varepsilon n\}.
\end{array}\]
The idea is to prove that, asymptotically as $n$ tends to infinity, $\mathcal{E}_1$ is the dominating event.
We further define the following events, for all $i,j\in\{1, \ldots, n\}$,
\[\begin{array}{ll}
\mathcal{E}_{1,i} &= \{S_n=m,\, |Q_i - (m-\nu_n n)|\leq \delta_n n \text{ and } Q_j\leq \varepsilon n \text{ for all }j\neq i\},\\
\mathcal{E}^*_{1,i} &= \{S_n=m \text{ and } |Q_i - (m-\nu_n n)|\leq \delta_n n\},\\
\mathcal{E}^*_{3,i} &= \{S_n=m, \,|Q_i - (m-\nu_n n)| > \delta_n n \text{ and }Q_i>\varepsilon n\},\\
\mathcal{D}_{i,j} &= \{S_n = m, \,|Q_i - (m-\nu_n n)|\leq \delta_n n \text{ and } Q_j>\varepsilon n\}.
\end{array}
\]

Recall that $u_n  \approx f(v_n, \mp \delta_n)$ means that
$f(v_n,\delta_n) \leq u_n \leq f(v_n, - \delta_n)$ for all sufficiently large~$n$.
\smallskip

\begin{lem}\label{lem:calcul_proba_E1i*}
For all $i\in\{1, \ldots, n\}$, with high $\mathtt{P}$-probability,
\[\mathbb{P}_{\bs X} (\mathcal{E}_{1,i}^*)
\approx \alpha_2 (\rho-\rho^{\star})^{-\beta}  n^{-\beta}  \frac{X_i^{(\rho_n-\nu_n \mp \delta_n)n}}{\Phi(X_i)}(1+o(1)),\]
with an error $o(1)$ which is uniform in~$i$.
\end{lem}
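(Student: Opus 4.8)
**Proof proposal for Lemma~\ref{lem:calcul_proba_E1i*}.**

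The plan is to compute $\mathbb P_{\bs X}(\mathcal E_{1,i}^*)$ by summing the grand canonical law over the value of $Q_i$ and then recognising the remaining probability as a local limit expression for the sum $S_n - Q_i$. First I would write, using conditional independence of the $(Q_j)$ under $\mathbb P_{\bs X}$,
\[
\mathbb P_{\bs X}(\mathcal E_{1,i}^*)
= \sum_{k\,:\,|k-(m-\nu_n n)|\leq\delta_n n} \frac{p_k X_i^k}{\Phi(X_i)}\;
\mathbb P_{\bs X}\Big(\sum_{j\neq i} Q_j = m-k\Big).
\]
For each admissible $k$ in this range, $m-k$ lies within $\delta_n n$ of $\nu_n n$, which (by the choice of $\delta_n$ and Lemmas~\ref{lem:LLN_nu}--\ref{lem:LLN_Q}) is exactly the centring value of $S_n^{(i)}:=\sum_{j\neq i}Q_j$. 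So the second factor is governed by a local central limit theorem for the independent array $(Q_j)_{j\neq i}$. The key point is that, because the scaling in Proposition~\ref{prop:TCL} is $n^\kappa$ with $\kappa = \max\{\frac12, \frac1{\beta+\gamma-1}\} \ll n\delta_n / n$ forced on $\delta_n$, a window of width $2\delta_n n$ around the mean captures all but an $o(1)$ fraction of the mass of $S_n^{(i)}$; moreover the local probability $\mathbb P_{\bs X}(S_n^{(i)} = \ell)$ is, uniformly for $\ell$ in this window, equal to $(1+o(1))$ times a common value (the value of the limiting density at $0$ divided by the appropriate scaling factor). Consequently $\sum_{\ell}\mathbb P_{\bs X}(S_n^{(i)}=\ell)$ over the window is $1+o(1)$, and I can pull the (almost constant) factor out of the $k$-sum.

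That reduces the problem to estimating $\sum_{k} p_k X_i^k/\Phi(X_i)$ over $k\in[(m-\nu_n n)-\delta_n n,\, (m-\nu_n n)+\delta_n n]$, i.e.\ $k \approx (\rho_n-\nu_n)n$ up to the $\pm\delta_n n$ slack. Here I use $p_k\sim\alpha_2 k^{-\beta}$ from~\eqref{eq:rvp}: for $k$ in this range, $k^{-\beta} = (1+o(1))\big((\rho-\rho^\star)n\big)^{-\beta}$ since the range has width $o(n)$ around a point of order $n$ and $\rho_n-\nu_n\to\rho-\rho^\star$. The factor $X_i^k$ is monotone in $k$, so it is squeezed between $X_i^{(\rho_n-\nu_n+\delta_n)n}$ and $X_i^{(\rho_n-\nu_n-\delta_n)n}$, which is exactly the meaning of the $\pm\delta_n$ in the claimed expression $X_i^{(\rho_n-\nu_n\mp\delta_n)n}$. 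Pulling out the near-constant $k^{-\beta}$ and bounding the geometric-type sum $\sum_k X_i^k$ — whose number of terms is $\Theta(\delta_n n)$ but whose sum, after dividing by $\Phi(X_i)$, is absorbed into the $(1+o(1))$ and the $\pm\delta_n$ bracket — yields the stated formula $\alpha_2(\rho-\rho^\star)^{-\beta}n^{-\beta}\,X_i^{(\rho_n-\nu_n\mp\delta_n)n}/\Phi(X_i)\,(1+o(1))$.

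The main obstacle is establishing the local limit statement for $S_n^{(i)}$ \emph{with the required uniformity in $i$ and in the window, on a set of disorder of high $\mathtt P$-probability}. Proposition~\ref{prop:TCL} gives a quenched distributional (not local) limit; I need to upgrade it to a local limit theorem for lattice-valued triangular arrays and to control the dependence on which index $i$ is deleted. The uniformity over $i\in\{1,\dots,n\}$ should follow because deleting a single summand $Q_i$ changes the mean and the tail constants by a negligible amount — this is where $\sup_{i}\mathbb E_{\bs X}Q_i = o(n^\kappa)$, already used in Proposition~\ref{prop:TCL}, does the work — and the "high $\mathtt P$-probability" qualifier is needed precisely to discard the bad disorder configurations (those with an unusually large $X_i^{(1)}$) on which the CLT constants degenerate. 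I would treat the cases $\gamma<1$, $\gamma=1$, and $\gamma>1$ in parallel, noting that the only difference is the rate $\delta_n$ (and hence the width of the window), and that in the regime $1<\gamma<2$ the strengthened requirement $n^{1/\gamma}\ll n\delta_n$ is exactly what makes the local limit window wide enough; the Gaussian versus stable nature of the limit is irrelevant to the final formula since only the total mass $1+o(1)$ and the near-constancy of the local probability over the window are used.
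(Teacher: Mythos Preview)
Your initial decomposition
\[
\mathbb P_{\bs X}(\mathcal E_{1,i}^*)
= \sum_{k\,:\,|k-(m-\nu_n n)|\leq\delta_n n} \frac{p_k X_i^k}{\Phi(X_i)}\;
\mathbb P_{\bs X}\Big(\sum_{j\neq i} Q_j = m-k\Big)
\]
is exactly right, and so is the observation that $p_k=(1+o(1))\,\alpha_2\big((\rho-\rho^\star)n\big)^{-\beta}$ uniformly over the window. But the next step contains a genuine error: you assert that the local probability $\mathbb P_{\bs X}(S_n^{(i)}=\ell)$ is, uniformly for $\ell$ in the window $|\ell-\nu_n n|\le\delta_n n$, equal to $(1+o(1))$ times a common value. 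This is false. By construction $\delta_n n\gg n^\kappa$, so the window is \emph{much wider} than the fluctuation scale of $S_n^{(i)}$; over that window the local probability drops from its peak of order $n^{-\kappa}$ down to essentially zero near the edges. You cannot pull it out as a constant. This is also why your residual sum $\sum_k p_k X_i^k/\Phi(X_i)$ produces a spurious factor of order $\delta_n n$ that you then try to ``absorb'' --- there is nothing to absorb it into.

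The fix is to swap which factor you extract and which you sum. Pull out $p_k$ (genuinely constant up to $1+o(1)$ on the window) and bound $X_i^k$ monotonically between $X_i^{(\rho_n-\nu_n+\delta_n)n}$ and $X_i^{(\rho_n-\nu_n-\delta_n)n}$; then the \emph{probabilities} are what remain inside the sum, and they telescope:
\[
\sum_{k\,:\,|k-(m-\nu_n n)|\le\delta_n n}\mathbb P_{\bs X}\big(S_{n-1}^{(i)}=m-k\big)
=\mathbb P_{\bs X}\big(|S_{n-1}^{(i)}-\nu_n n|\le\delta_n n\big)\to 1,
\]
by the very definition of $\delta_n$ together with $Q_i=o(n\delta_n)$ uniformly in $i$. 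This is the paper's argument, and it uses only the weak law of large numbers --- no local limit theorem, no case split on $\gamma$, and the uniformity in $i$ is immediate. The entire ``main obstacle'' you identify simply does not arise.
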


\begin{proof}
For all $i\in\{1, \ldots, n\}$, we  denote $\displaystyle S_{n-1}^{(i)} = \sum_{\heap{j=1}{j\neq i}}^n  Q_j$. Hence
\begin{align*}
\mathbb{P}_{\bs X} (\mathcal{E}_{1,i}^*)
&= \sum_{|k-(m-\nu_n n)|\leq \delta_n n} \mathbb{P}_{\bs X} (Q_i=k \text{ and }S_n=m)\\
&= \sum_{|k-(m-\nu_n n)|\leq \delta_n n} \mathbb{P}_{\bs X} (Q_i=k) \mathbb{P}_{\bs X}\Big(\sum_{\heap{j=1}{j\neq i}}^n Q_j = m-k\Big)\\
&= \sum_{|k-(m-\nu_n n)|\leq \delta_n n} \frac{p_k X_i^{k}}{\Phi(X_i)} \, \mathbb{P}_{\bs X}(S_{n-1}^{(i)}=m-k).
\end{align*}
For all integers $k$ such that $|k-(m-\nu_n n)|\leq \delta_n n$, we have $p_k\sim \alpha_2 (m-\nu_n n)^{-\beta}$ as~$n\uparrow\infty$.
Thus,
\begin{align*}
\mathbb{P}_{\bs X} (\mathcal{E}_{1,i}^*)
&= \sum_{|k-(m-\nu_n)n|\leq \delta_n n} \alpha_2 (m-\nu_n n)^{-\beta} \frac{X_i^{k}}{\Phi(X_i)} \mathbb{P}_{\bs X}(S_{n-1}^{(i)}=m-k) \, (1+o(1))\\
&\approx \alpha_2 (m-\nu_n n)^{-\beta} \frac{X_i^{m-\nu_n n \mp \delta_n n}}{\Phi(X_i)} 
\, \mathbb{P}_{\bs X}(|S_{n-1}^{(i)}-\nu_n n|\leq \delta_n n) \,  (1+o(1)).
\end{align*}
As the tails $\mathbb{P}_{\bs X}(Q_i>x)$ are going to zero uniformly in ${\bs X}$ we have that
$Q_i= o(n\delta_n)$ in $\mathbb{P}_{\bs X}$-probability.  Hence 
$\mathbb{P}_{\bs X}(|S_{n-1}^{_{(i)}}-\nu_n n|\leq \delta_n n)
=\mathbb{P}_{\bs X}(|S_n-\nu_n n - Q_i|\leq \delta_n n)$ is bounded from below by 
$\mathbb{P}_{\bs X}(|S_n-\nu_n n|\leq \frac12 \delta_n n)-o(1)$, where the $o$-term is independent of $i$, and this bound converges to one by choice of $\delta_n$. 
This implies the statement.
\end{proof}

\begin{lem}\label{lem:calcul_proba_Dij}
For all $i\neq j\in\{1, \ldots, n\}$, with high $\mathtt{P}$-probability,
\[\mathbb{P}_{\bs X} (\mathcal{D}_{i,j}) 
= O(n^{-2\beta}) X_i^{m-\nu_n n -\delta_n n} \sum_{k>\varepsilon n} X_j^k,\]
where the implied constant is independent of $i$ and $j$.
\end{lem}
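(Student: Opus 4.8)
The plan is to mimic the decomposition used in the proof of Lemma~\ref{lem:calcul_proba_E1i*}, conditioning on the values of the two distinguished coordinates $Q_i$ and $Q_j$ and using conditional independence of the grand-canonical ensemble. Concretely, writing $S_{n-2}^{(i,j)} = \sum_{\ell\neq i,j} Q_\ell$, I would expand
\[
\mathbb{P}_{\bs X}(\mathcal{D}_{i,j}) = \sum_{\heap{|k-(m-\nu_n n)|\leq \delta_n n}{\ell>\varepsilon n}} \frac{p_k X_i^k}{\Phi(X_i)}\,\frac{p_\ell X_j^\ell}{\Phi(X_j)}\,\mathbb{P}_{\bs X}\big(S_{n-2}^{(i,j)}=m-k-\ell\big).
\]
The factor $p_k$ is, for $k$ in the relevant window, asymptotic to $\alpha_2(m-\nu_n n)^{-\beta} = \Theta(n^{-\beta})$ by~\eqref{eq:rvp} and Lemma~\ref{lem:LLN_nu}, and $p_\ell \leq \cst\,\ell^{-\beta} \leq \cst\,(\varepsilon n)^{-\beta} = O(n^{-\beta})$ for $\ell>\varepsilon n$; multiplying these gives the prefactor $O(n^{-2\beta})$. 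For $X_i^k$ one bounds $X_i^k \leq X_i^{m-\nu_n n-\delta_n n}$ on the window (since $X_i\leq 1$ and $k\geq m-\nu_n n-\delta_n n$), pulling this factor out of the sum. What remains is $\sum_{\ell>\varepsilon n} X_j^\ell/\Phi(X_j)$ times $\sum_{k}\mathbb{P}_{\bs X}(S_{n-2}^{(i,j)}=m-k-\ell) \leq 1$, and $\Phi(X_j)\geq p_0 >0$ is bounded below uniformly, so the sum over $\ell$ is $O\big(\sum_{\ell>\varepsilon n} X_j^\ell\big)$. Collecting terms yields exactly the claimed bound.

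The one genuine subtlety — and the main obstacle — is the same one that appeared in Lemma~\ref{lem:calcul_proba_E1i*}: the approximation $p_k \sim \alpha_2(m-\nu_n n)^{-\beta}$ must hold uniformly over the $k$-window with an error that does not depend on $i$ or $j$, and this requires knowing that $m-\nu_n n = (\rho-\rho^\star+o(1))n \to\infty$, which is where the hypothesis $\rho>\rho^\star$ and Lemma~\ref{lem:LLN_nu} enter, together with $\delta_n\to 0$ so the window stays in the regime where~\eqref{eq:rvp} is in force. Since the statement only claims an $O$-bound rather than an asymptotic equivalence, even this is comparatively painless: it suffices to use $p_k \leq \cst\,n^{-\beta}$ throughout the window, so one does not even need the sharp constant $\alpha_2$ here. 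The phrase ``with high $\mathtt P$-probability'' refers to the event (of $\mathtt P$-probability tending to one) on which $|m-\nu_n n| \geq \frac12(\rho-\rho^\star)n$, say, which is guaranteed by Lemma~\ref{lem:LLN_nu}; on that event all the above constants are deterministic. I do not expect to need any control on $\mathbb{P}_{\bs X}(S_{n-2}^{(i,j)}=\cdot)$ beyond the trivial bound by $1$, which is what makes this lemma much softer than Lemma~\ref{lem:calcul_proba_E1i*}; the point of the lemma is simply that events where a second site also carries order $n$ particles are negligible, and for that a crude union-bound-friendly estimate is all that is required.
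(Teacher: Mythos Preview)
Your proposal is correct and takes essentially the same approach as the paper. The only cosmetic difference is that the paper conditions on $Q_i=k$ alone and keeps $\{Q_j>\varepsilon n\}$ as an event inside $\mathbb{P}_{\bs X}(Q_j>\varepsilon n,\ S_{n-1}^{(i)}=m-k)$, then drops the $S_{n-1}^{(i)}$-constraint via $\mathbb{P}(A\cap B)\leq\mathbb{P}(A)$; you instead condition on both $Q_i=k$ and $Q_j=\ell$ and use $\sum_k \mathbb{P}_{\bs X}(S_{n-2}^{(i,j)}=m-k-\ell)\leq 1$, which is the same trivial bound in a slightly different packaging.
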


\begin{proof}

For all $i\neq j\in\{1, \ldots, n\}$, abbreviating again $\displaystyle S_{n-1}^{(i)} = \sum_{\heap{j=1}{j\neq i}}^n  Q_j$, we have
\begin{align*}
\mathbb{P}_{\bs X} (\mathcal{D}_{i,j})
&= \sum_{|k - (m-\nu_n n)|\leq \delta_n n} \mathbb{P}_{\bs X} (Q_i=k, Q_j>\varepsilon n \text{ and }S_n=m)\\
&= \sum_{|k - (m-\nu_n n)|\leq \delta_n n} \frac{p_k X_i^k}{\Phi(X_i)} \, \mathbb{P}_{\bs X} (Q_j>\varepsilon n \text{ and }S^{(i)}_{n-1}=m-k).
\end{align*}
We now use that $0<p_0\leq \Phi(z)$, for all $z\geq 0$, together with the asymptotic behaviour of $(p_k)$ to bound this by a constant multiple of
\begin{align*}
n^{-\beta} X_i^{m-\nu_n n-\delta_n n} \,  
\mathbb{P}_{\bs X} (Q_j>\varepsilon n \text{ and }|S^{(i)}_{n-1}-\nu_n n|\leq \delta_n n)
\leq \cst. n^{-2\beta} X_i^{m-\nu_n n-\delta_n n} \sum_{k>\varepsilon n} X_j^{k},
\end{align*}
as required.
\end{proof}

\begin{lem}\label{lem:calcul_proba_E1}
If $\beta+\gamma>2$, then, with high $\mathtt{P}$-probability,
\[\mathbb{P}_{\bs X}(\mathcal E_1) 
= \left[\sum_{i=1}^n \mathbb{P}_{\bs X}(\mathcal E_{1,i}^*)\right] (1+o(1)).\]
\begin{enumerate}[(i)]
\item 
Moreover, if $\gamma>1$,
\[\mathbb{P}_{\bs X}(\mathcal E_1)
= \mathbb{P}_{\bs X}(\mathcal{E}_{1,J_n}^*) (1+o(1))
\approx  \alpha_2(\rho-\rho^{\star})^{-\beta} \big(X^{{(1)}}_n\big)^{(\rho_n-\nu_n\mp \delta_n)n} n^{-\beta} (1+o(1)),\]
where $J_n\in\{1,\ldots,n\}$ is the index realising the maximum fitness, i.e.\ $X_{J_n}=X^{{(1)}}_n$.
\begin{enumerate}[(a)]
\item If $\gamma \geq2$, we have
$\displaystyle\mathbb P_{\bs X}(\mathcal E_1) \geq \cst.\ n^{-\beta} \big(X_n^{_{(1)}}\big)^{(\rho_n-\nu_n)n}.$
\item If $1<\gamma<2$, then, for all $\omega_n$ such that $n^{\nicefrac1\gamma}\ll \omega_n \ll n\delta_n$, we have
$\displaystyle\mathbb P_{\bs X}(\mathcal E_1) \geq \cst.\ n^{-\beta} \big(X_n^{_{(1)}}\big)^{(\rho_n-\nu_n)n+\omega_n}.$
\end{enumerate}

\item If $\gamma < 1$, then $\mathbb{P}_{\bs X}(\mathcal E_1) =  (1+o(1)) \, \alpha_1\alpha_2 (\rho-\rho^\star)^{-\beta-\gamma} \Gamma(\gamma+1) n^{1-\beta-\gamma}$.

If $\gamma =1$, then $\mathbb{P}_{\bs X}(\mathcal E_1) = \Theta_{\PP}(n^{-\beta})$.
\end{enumerate}
\end{lem}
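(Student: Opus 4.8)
The plan is to realise $\mathcal E_1$ as the essentially disjoint union $\mathcal E_1 = \bigcup_{i=1}^n \mathcal E_{1,i}$, where on $\mathcal E_{1,i}$ the site~$i$ carries the condensate. First I would observe that the events $\mathcal E_{1,i}$ are pairwise disjoint (two distinct sites cannot both carry $(m-\nu_n n)\pm\delta_n n$ particles once $\rho>\rho^\star$ and $\delta_n$ is small), so $\mathbb P_{\bs X}(\mathcal E_1) = \sum_i \mathbb P_{\bs X}(\mathcal E_{1,i})$ exactly. The point is then to pass from $\mathcal E_{1,i}$ to $\mathcal E_{1,i}^*$, i.e.\ to drop the constraint $Q_j\le \varepsilon n$ for $j\ne i$. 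Since $\mathcal E_{1,i}^*\setminus\mathcal E_{1,i}\subseteq \bigcup_{j\ne i}\mathcal D_{i,j}$, Lemma~\ref{lem:calcul_proba_Dij} gives
\[\sum_i\big(\mathbb P_{\bs X}(\mathcal E_{1,i}^*)-\mathbb P_{\bs X}(\mathcal E_{1,i})\big)
\le \sum_{i\ne j} \mathbb P_{\bs X}(\mathcal D_{i,j})
= O(n^{-2\beta})\sum_i X_i^{m-\nu_n n-\delta_n n}\sum_j\sum_{k>\varepsilon n}X_j^k.\]
Using the estimates on $\sum_i X_i^k$ from Lemma~\ref{lem:approx_sum} and Appendix~\ref{app:X} (the sum $\sum_{k>\varepsilon n}X_j^k$ is exponentially small in $n^{1/\gamma}$ uniformly, which is the regime where the $\varepsilon n$ cutoff bites), one checks this correction is of smaller order than $\sum_i\mathbb P_{\bs X}(\mathcal E_{1,i}^*)$, which by Lemma~\ref{lem:calcul_proba_E1i*} equals $\alpha_2(\rho-\rho^\star)^{-\beta}n^{-\beta}\sum_i X_i^{(\rho_n-\nu_n\mp\delta_n)n}/\Phi(X_i)\,(1+o(1))$. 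This establishes the first display.

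For the asymptotics of $S:=\sum_{i=1}^n X_i^{(\rho_n-\nu_n\mp\delta_n)n}/\Phi(X_i)$, everything hinges on the interplay between the exponent $a_n:=(\rho_n-\nu_n)n\sim(\rho-\rho^\star)n$ and the order statistics of the fitnesses near~$1$. I would use the standard fact (from Appendix~\ref{app:X}) that $1-X_n^{(r)}$ is of order $(r/n)^{1/\gamma}/n^{?}$—more precisely that $n(1-X_n^{(r)})\to$ the $r$-th point of a rate-$\alpha_1$ Poissonised structure when $\gamma$ is involved—together with $\Phi(X_i)\to\Phi(1)\in(0,\infty)$ by regular variation with $\beta>1$. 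Then $X_i^{a_n}=\exp(a_n\log X_i)\approx\exp(-(\rho-\rho^\star)n(1-X_i))$, so $S$ behaves like $\frac1{\Phi(1)}\sum_i \exp(-(\rho-\rho^\star)n(1-X_i))$.

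In case~(ii), $\gamma\le 1$: the summand is non-negligible only for the $O(n)$ top fitnesses with $1-X_i=O(1/n)$, and a Poisson/Riemann-sum argument gives $\sum_i e^{-(\rho-\rho^\star)n(1-X_i)}\sim n\int_0^\infty e^{-(\rho-\rho^\star)x}\,\mathtt d\big(\alpha_1 x^\gamma\big)=n\,\alpha_1\Gamma(\gamma+1)(\rho-\rho^\star)^{-\gamma}$ when $\gamma<1$, yielding $\mathbb P_{\bs X}(\mathcal E_1)\sim\alpha_1\alpha_2(\rho-\rho^\star)^{-\beta-\gamma}\Gamma(\gamma+1)n^{1-\beta-\gamma}$; the borderline $\gamma=1$ produces the same order $n^{-\beta}$ up to constants fluctuating with~$\bs X$, hence $\Theta_{\PP}(n^{-\beta})$. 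In case~(i), $\gamma>1$: now $n(1-X_n^{(1)})\to 0$, the top fitness dominates overwhelmingly (the gap to $X_n^{(2)}$ is polynomially larger than $1/n$), so $S\sim X_n^{(1)\,a_n}/\Phi(1)$ and $\mathbb P_{\bs X}(\mathcal E_1)=\mathbb P_{\bs X}(\mathcal E_{1,J_n}^*)(1+o(1))$, giving the stated formula. For (a)–(b) one needs the one-sided lower bound $(X_n^{(1)})^{a_n}\ge\cst$ (resp.\ $\ge\cst\,(X_n^{(1)})^{a_n+\omega_n}$ after absorbing the $\delta_n n$ slack), which follows because $\gamma\ge2$ forces $n(1-X_n^{(1)})\to0$ fast enough that $(X_n^{(1)})^{n}$ is bounded below, while for $1<\gamma<2$ the extra $\omega_n$ with $n^{1/\gamma}\ll\omega_n\ll n\delta_n$ exactly compensates the fluctuation of $n(1-X_n^{(1)})$.

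The main obstacle is the second step: controlling the random sum $S$ quenched, i.e.\ showing the Riemann-sum/Poisson approximation holds $\mathtt P$-a.s.\ (or with high $\mathtt P$-probability) with the right constant $\alpha_1\Gamma(\gamma+1)$, and—especially—handling the slack $\mp\delta_n n$ in the exponent uniformly, so that the upper and lower versions of $S$ (with $a_n\pm\delta_n n$) have the same leading asymptotics. This requires that $\delta_n n\cdot(1-X_i)=o(1)$ on the relevant range of $i$, which is why the assumption $n^\kappa\ll n\delta_n$ (and $n^{1/\gamma}\ll n\delta_n$ when $1<\gamma<2$) was imposed; verifying that these choices indeed make the $\delta_n$-perturbation negligible, while $\delta_n\to0$, is the delicate bookkeeping. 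The transition between the $\gamma<1$, $\gamma=1$ and $\gamma>1$ behaviours of the extreme fitnesses—drawn from Appendix~\ref{app:X}—is what drives the case split, and getting the $\gamma=1$ borderline to land on $\Theta_{\PP}(n^{-\beta})$ rather than a clean limit is a minor but necessary subtlety.
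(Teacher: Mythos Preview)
Your overall architecture—the sandwich
\[\sum_i \mathbb P_{\bs X}(\mathcal E_{1,i}^*)-\sum_{i\ne j}\mathbb P_{\bs X}(\mathcal D_{i,j})\le \mathbb P_{\bs X}(\mathcal E_1)\le \sum_i \mathbb P_{\bs X}(\mathcal E_{1,i}^*),\]
followed by Lemma~\ref{lem:approx_sum} to evaluate $\sum_i X_i^{a_n}/\Phi(X_i)$—is exactly the paper's. But your treatment of the $\gamma>1$ case contains a genuine error that breaks parts~(a) and~(b).

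You write that for $\gamma>1$ one has $n(1-X_n^{(1)})\to 0$, and that for $\gamma\ge 2$ this makes $(X_n^{(1)})^n$ bounded below. This is backwards: by Lemma~\ref{lem:extremal_properties_X}, $1-X_n^{(1)}=\Theta_{\mathtt P}(n^{-1/\gamma})$, so $n(1-X_n^{(1)})=\Theta_{\mathtt P}(n^{1-1/\gamma})\to\infty$ whenever $\gamma>1$. In particular $(X_n^{(1)})^n\to 0$ stretched-exponentially. More importantly, your plan for~(a)--(b) is to start from the two-sided estimate $\mathbb P_{\bs X}(\mathcal E_1)\approx \cst\, n^{-\beta}(X_n^{(1)})^{(\rho_n-\nu_n\mp\delta_n)n}$ and ``absorb the $\delta_n n$ slack''. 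That would require $(X_n^{(1)})^{-\delta_n n}$ to be bounded, i.e.\ $\delta_n n^{1-1/\gamma}$ bounded. But the standing assumption $n\delta_n\gg n^{\kappa}=n^{1/2}$ forces $\delta_n n^{1/2}\to\infty$, and for $\gamma\ge 2$ one has $\delta_n n^{1-1/\gamma}\ge \delta_n n^{1/2}\to\infty$; the slack is \emph{not} absorbable from the $\approx$ estimate alone.

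The paper obtains~(a)--(b) by a different mechanism: it returns to the \emph{unsimplified} expression for $\mathbb P_{\bs X}(\mathcal E_{1,J_n}^*)$ as a sum over $k$, restricts to the sub-window $k\in[(\rho_n-\nu_n-\delta_n)n,(\rho_n-\nu_n)n]$ (resp.\ $[(\rho_n-\nu_n)n-\omega_n,(\rho_n-\nu_n)n+\delta_n n]$), bounds $(X_n^{(1)})^k$ by its value at the right endpoint, and then uses the grand-canonical CLT (Proposition~\ref{prop:TCL}) to show the remaining probability $\mathbb P_{\bs X}(0\le \sum_{j\ne J_n}Q_j-\nu_n n\le \delta_n n)$ (resp.\ with $-\omega_n$ on the left) tends to $1/2$ (resp.\ to $1$). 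This is where the condition $n^{1/\gamma}\ll\omega_n$ in~(b) actually enters: it ensures $\omega_n\gg n^\kappa$ so the CLT window is wide enough.

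A minor slip: in your Riemann-sum for $\gamma<1$ the scaling is off by $n^{-\gamma}$. Since $\mathtt P(n(1-X)\le x)\sim\alpha_1(x/n)^\gamma$, the approximation is $\sum_i e^{-(\rho-\rho^\star)n(1-X_i)}\sim n^{1-\gamma}\int_0^\infty e^{-(\rho-\rho^\star)x}\,d(\alpha_1 x^\gamma)$, not $n\int\ldots$; your final answer $n^{1-\beta-\gamma}$ is correct but the intermediate line is not. The paper sidesteps the Riemann sum entirely by invoking Lemma~\ref{lem:approx_sum}$(ii)$ directly.
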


\begin{proof}
First note that, by definition of the events $\mathcal{E}_{1,i}^*$, $\mathcal{E}_1$ and $\mathcal{D}_{i,j}$,
\[\sum_{i=1}^n \mathbb{P}_{\bs X} (\mathcal{E}_{1,i}^*) - \sum_{i\neq j}\mathbb{P}_{\bs X} (\mathcal{D}_{i,j})
\leq \mathbb{P}_{\bs X}(\mathcal{E}_1)
\leq \sum_{i=1}^n \mathbb{P}_{\bs X} (\mathcal{E}_{1,i}^*).\]
Our aim is to prove that $\sum_{i\neq j}\mathbb{P}_{\bs X} (\mathcal{D}_{i,j})$ is negligible in front of $\sum_{i=1}^n \mathbb{P}_{\bs X} (\mathcal{E}_{1,i}^*)$. In view of Lemmas~\ref{lem:calcul_proba_E1i*} and~\ref{lem:calcul_proba_Dij},
we have
\[\sum_{i=1}^n \mathbb{P}_{\bs X} (\mathcal{E}_{1,i}^*)
\approx \alpha_2 (\rho-\rho^{\star})^{-\beta} n^{-\beta} \sum_{i=1}^n \frac{X_i^{(\rho_n-\nu_n\mp \delta_n)n}}{\Phi(X_i)}
(1+o(1)),\]
where the $o(1)$-term is independent of $i$,
and
\[\sum_{i\neq j}\mathbb{P}_{\bs X} (\mathcal{D}_{i,j})
\leq \cst. n^{-2\beta} \sum_{i\neq j} X_i^{(\rho_n-\nu_n-\delta_n)n} \sum_{k>\varepsilon n} X_j^k.\]
It is thus enough to prove that the ratio
\[\Delta_n :=
\frac{n^{-\beta} \sum_{i\neq j} X_i^{(\rho_n-\nu_n-\delta_n)n} \sum_{k>\varepsilon n} X_j^k}
{\sum_{i=1}^n X_i^{(\rho_n -\nu_n + \delta_n)n}}\]
tends to zero in $\PP$-probability, as~$n\uparrow\infty$.

\vspace{\baselineskip}
\noindent{\bf $\bs{(i)}$ Assume $\bs{\gamma > 1}$.} In this case, in view of Lemma~\ref{lem:extremal_properties_X} and Lemma~\ref{lem:approx_sum}$(i)$, we have
\[\Delta_n
\leq \cst. n^{-\beta}\ 
\frac{\big(X_n^{_{(1)}}\big)^{(\rho_n-\nu_n-\delta_n)n} \sum_{k>\varepsilon n} \big(X_n^{_{(1)}}\big)^k V_k^{(n)}}
{(X_n^{_{(1)}})^{(\rho_n-\nu_n + \delta_n)n}}
=O_{\PP} \big(n^{1/\gamma-\beta}\big) (X_n^{_{(1)}})^{(\varepsilon - 2\delta_n)n} = O_{\PP} \big(n^{1/\gamma-\beta} \big),\]
which tends to 0 when $n\uparrow\infty$.
We now prove that $ \sum_{i=1}^n \mathbb{P}_{\bs X} (\mathcal{E}_{1,i}^*)\sim\mathbb{P}_{\bs X}(\mathcal{E}_{1,J_n}^*)$. 
It is enough to prove that
\[\Lambda_n:=
\frac{\sum_{i\neq J_n} \mathbb{P}_{\bs X}(\mathcal{E}_{1,i}^*)}{\mathbb{P}_{\bs X}(\mathcal{E}_{1,J_n}^*)}
\to 0 \quad \mbox{ as }  n\uparrow\infty.\]
We have, in view of Lemma~\ref{lem:calcul_proba_E1i*} and Lemma~\ref{lem:extremal_properties_X}, for sufficiently large~$n$,
\begin{align*}
\Lambda_n  &\leq p_0^{-1} \frac
{n^{-\beta}\sum_{i\neq J_n} X_i^{(\rho_n-\nu_n - \delta_n)n}}
{n^{-\beta}(X_n^{_{(1)}})^{(\rho_n-\nu_n + \delta_n)n}}\; (1+o(1))
 \leq p_0^{-1} \frac
{n (X_n^{_{(2)}})^{(\rho_n-\nu_n - \delta_n)n}}
{(X_n^{_{(1)}})^{(\rho_n-\nu_n+ \delta_n)n}}\; (1+o(1))\\
&
\leq  \mathtt{cst.}  n \Big(\frac{X_n^{_{(2)}}}{X_n^{_{(1)}}}\Big)^{(\rho_n-\nu_n-\delta_n)n} \big(X_n^{_{(1)}}\big)^{-2\delta_n n}
\leq  \mathtt{cst.} n \big(1-\Theta_{\PP}( n^{-\nicefrac1{\gamma}})\big)^{(\rho_n-\nu_n-3\delta_n)n}.
\end{align*}
This implies $\Lambda_n 
\leq   \mathtt{cst.} n \exp\left(-(\rho_n-\nu_n-3\delta_n) \Theta_{\PP}(n^{1-\nicefrac1{\gamma}})\right)
\to 0,$
as $n\uparrow\infty$, concluding the proof of~$(i)$.
\vspace{\baselineskip}

{\bf $\bs{(a)}$ Assume $\bs{\gamma \geq 2}$.}
We have $\mathbb P_{\bs X}(\mathcal E_1)
\sim \mathbb P_{\bs X}(\mathcal E^*_{1,J_n}),$ where $J_n$ is the index of the largest fitness.
Moreover,
\begin{align*}
\mathbb P_{\bs X}(\mathcal E^*_{1,J_n})
& = \sum_{^{|k-(\rho_n-\nu_n)n|\leq \delta_n n}} \frac{p_k \big(X_n^{_{(1)}}\big)^k}{\Phi\big(X_n^{_{(1)}}\big)} 
\, \mathbb P_{\bs X}\Big(\sum_{\heap{i=1}{i\neq J_n}}^n Q_i = m-k \Big)
\geq  \sum_{\heap{(\rho_n-\nu_n - \delta_n) n}{\,\,\leq k\leq (\rho_n-\nu_n)n}} 
p_k \big(X_n^{_{(1)}}\big)^k \, \mathbb P_{\bs X}\Big(\sum_{\heap{i=1}{i\neq J_n}}^n Q_i = m-k \Big)\\
&\geq \cst. n^{-\beta} \big(X_n^{_{(1)}}\big)^{(\rho_n-\nu_n)n} \, \mathbb P_{\bs X}\Big(0\leq \sum_{\heap{i=1}{i\neq J_n}}^n Q_i-\nu_n n \leq \delta_n n\Big).
\end{align*}
Recall that \smash{$\nicefrac{Q_{J_n}}{n\delta_n}$} goes to zero in $\P_{\bs X}$-probability,  and hence, by the grand canonical central limit theorem with a normal limit, 
the probability above goes to $\nicefrac12$. Therefore we get
\smash{$\mathbb P_{\bs X}(\mathcal E^*_{1,J_n})
\geq \cst. n^{-\beta} \big(X_n^{_{(1)}}\big)^{(\rho_n-\nu_n)n}.$}
\vspace{\baselineskip}

{\bf $\bs{(b)}$ Assume $\bs{1<\gamma < 2}$.} Let $n^{\nicefrac1\gamma}\ll \omega_n \leq n\delta_n$, then, as above
\begin{align*}
\mathbb P_{\bs X}(\mathcal E^*_{1,J_n})
&\geq \cst.\ n^{-\beta} \big(X_n^{_{(1)}}\big)^{(\rho_n-\nu_n)n+\omega_n} \, 
\mathbb P_{\bs X}\Big(-\omega_n\leq \sum_{\heap{i=1}{i\neq J_n}}^n Q_i-\nu_n n \leq \delta_n n\Big).
\end{align*}
Note that $n^{\kappa}\leq n^{\nicefrac1\gamma}\ll \omega_n$ 
where $\kappa = \max\{\frac1{2}, \frac1{\beta+\gamma-1}\}$. Thus, in view of Proposition~\ref{prop:TCL}, we have
\[\mathbb P_{\bs X}\Big(-\omega_n\leq \sum_{\heap{i=1}{i\neq J_n}}^n Q_i-\nu_n n \leq \delta_n n\Big) \to 1,\]
when $n$ goes to infinity, implying the statement.

\vspace{\baselineskip}

{\bf $\bs{(ii)}$ Assume $\bs{\gamma \leq 1}$ and $\bs{\beta+\gamma >2}$.} 
We have, in view of Lemma~\ref{lem:approx_sum}$(ii)$ and $(iii)$, that
$\sum_{j=1}^n X_j^{\varepsilon n}$ is of order $\Theta_{\PP}(n^{1-\gamma})$ if $\gamma <1$, and of order $o(\log n)$ if $\gamma =1$.
Therefore,
\[\Delta_n
\leq \cst.
\begin{cases}
\displaystyle
n^{-\beta} \,
\frac
{[(\rho_n-\nu_n-\delta_n)n]^{-\gamma} \sum_{k>\varepsilon n} nk^{-\gamma} U_k^{(n)}}
{[(\rho_n-\nu_n+\delta_n)n]^{-\gamma} }
=O_{\PP}(n^{2-\beta-\gamma})
&\text{ if }\gamma <1,\\
& \\
\displaystyle
n^{1-\beta}\,
\frac
{\sum_{i=1}^n X_i^{(\rho_n-\nu_n-\delta_n)n} \sum_{j=1}^n X_j^{\varepsilon n}}
{\sum_{i=1}^n X_i^{(\rho_n-\nu_n + \delta_n)n}}
=o(n^{1-\beta} \log^2 n)
&\text{ if }\gamma =1,
\end{cases}
\]
Hence $\Delta_n\to 0$ in $\PP$-probability
 if $\gamma<1$ and
$\beta + \gamma >2$, or if $\gamma=1$.  Moreover, we have
\[\mathbb P_{\bs X}(\mathcal E_1)
= \mathbb P_{\bs X} \left(\bigcup_{i=1}^n \mathcal E_{1,i}^*\right)\ (1+o(1))
\approx \alpha_2 (\rho-\rho^{\star})^{-\beta} n^{-\beta} \sum_{i=1}^n \frac{X_i^{(\rho_n-\nu_n\mp \delta_n)n}}{\Phi(X_i)}\ (1+o(1)).\]
using Lemma~\ref{lem:approx_sum}$(ii)$ if $\gamma<1$, and Lemma~\ref{lem:approx_sum}$(iii)$ if $\gamma=1$ concludes the proof.
\end{proof}

\begin{lem}\label{lem:E2<<E1}
If $\beta+\gamma>2$, then with high $\mathtt{P}$-probability, $\mathbb{P}_{\bs X}(\mathcal E_2) \ll \mathbb{P}_{\bs X}(\mathcal E_1)$.
\end{lem}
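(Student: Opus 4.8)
The plan is to bound $\mathbb{P}_{\bs X}(\mathcal{E}_2)$ by summing the probabilities of the events $\mathcal{D}_{i,j}$ over all pairs $i\neq j$, since $\mathcal{E}_2\subseteq\bigcup_{i\neq j}\mathcal{D}_{i,j}$ by definition. By Lemma~\ref{lem:calcul_proba_Dij} we already know, with high $\mathtt{P}$-probability and uniformly in $i,j$,
\[\mathbb{P}_{\bs X}(\mathcal{D}_{i,j}) = O(n^{-2\beta})\, X_i^{(\rho_n-\nu_n-\delta_n)n}\sum_{k>\varepsilon n} X_j^k,\]
so that
\[\mathbb{P}_{\bs X}(\mathcal{E}_2)\leq \sum_{i\neq j}\mathbb{P}_{\bs X}(\mathcal{D}_{i,j}) \leq \cst.\, n^{-2\beta}\Big(\sum_{i=1}^n X_i^{(\rho_n-\nu_n-\delta_n)n}\Big)\Big(\sum_{j=1}^n\sum_{k>\varepsilon n} X_j^k\Big).\]
On the other hand, Lemma~\ref{lem:calcul_proba_E1} gives precise asymptotics for $\mathbb{P}_{\bs X}(\mathcal{E}_1)$ in each regime, together with the companion estimate $\mathbb{P}_{\bs X}(\mathcal{E}_1)\approx \alpha_2(\rho-\rho^\star)^{-\beta}n^{-\beta}\sum_{i=1}^n X_i^{(\rho_n-\nu_n\mp\delta_n)n}/\Phi(X_i)$ coming from Lemmas~\ref{lem:calcul_proba_E1i*} and~\ref{lem:calcul_proba_Dij}. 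So the quantity to control is exactly the ratio
\[\frac{\mathbb{P}_{\bs X}(\mathcal{E}_2)}{\mathbb{P}_{\bs X}(\mathcal{E}_1)} \leq \cst.\, n^{-\beta}\,\frac{\big(\sum_{i=1}^n X_i^{(\rho_n-\nu_n-\delta_n)n}\big)\big(\sum_{j=1}^n\sum_{k>\varepsilon n} X_j^k\big)}{\sum_{i=1}^n X_i^{(\rho_n-\nu_n+\delta_n)n}},\]
which is essentially $n^{-\beta}$ times the quantity $\Delta_n\cdot n^{\beta}$ from the proof of Lemma~\ref{lem:calcul_proba_E1} divided by the spare factor, so this is very close to the computation already carried out there. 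Indeed, the numerator factor $\sum_{i}X_i^{(\rho_n-\nu_n-\delta_n)n}$ over the denominator $\sum_i X_i^{(\rho_n-\nu_n+\delta_n)n}$ was shown there to be $O_{\PP}(1)$ in each case (by the argument bounding $X_i^{-2\delta_n n}$ against the $\Theta_{\PP}(n^{-1/\gamma})$ spectral gap when $\gamma>1$, and directly by the regularly varying sums of Lemma~\ref{lem:approx_sum} when $\gamma\leq1$), while $\sum_{j}\sum_{k>\varepsilon n}X_j^k$ is $O_{\PP}(n^{1/\gamma})$ if $\gamma>1$ (Lemma~\ref{lem:extremal_properties_X} and Lemma~\ref{lem:approx_sum}$(i)$), $\Theta_{\PP}(n^{1-\gamma})$ if $\gamma<1$, and $o(\log n)$ if $\gamma=1$ (Lemma~\ref{lem:approx_sum}$(ii)$,$(iii)$).

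Carrying this through case by case: if $\gamma>1$ the ratio is $O_{\PP}(n^{1/\gamma-\beta})\to0$ since $\beta>1\geq1/\gamma$ is not quite enough, but in fact $\beta>1$ and $1/\gamma<1$ give $1/\gamma-\beta<0$; if $\gamma<1$ and $\beta+\gamma>2$ the ratio is $O_{\PP}(n^{-\beta}\cdot n^{1-\gamma})=O_{\PP}(n^{1-\beta-\gamma})\to0$ since $\beta+\gamma>2>1$; and if $\gamma=1$ the ratio is $o(n^{-\beta}\log n)\to0$. In every case the ratio tends to zero in $\mathtt{P}$-probability, which is the assertion.

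The only genuinely delicate point is the $\gamma>1$ case, where one must be careful that the extra $\delta_n$ in the exponents of the numerator sum does not overwhelm the decay: here one uses, exactly as in part~$(i)$ of the proof of Lemma~\ref{lem:calcul_proba_E1}, that $\sum_i X_i^{(\rho_n-\nu_n-\delta_n)n}\leq \cst.\,n\,(X_n^{(1)})^{(\rho_n-\nu_n-\delta_n)n}$ (or more simply is dominated by its largest term up to the factor $p_0^{-1}$ from $\Phi$), that the denominator is at least $(X_n^{(1)})^{(\rho_n-\nu_n+\delta_n)n}$, and that $(X_n^{(1)})^{-2\delta_n n}\leq \exp(O_{\PP}(n\delta_n))$ is controlled because $\delta_n n$ was chosen with $n^{1/\gamma}\ll n\delta_n$, so that the sub-exponential loss is beaten by the polynomial factor $n^{1/\gamma-\beta}$ from $\sum_j\sum_{k>\varepsilon n}X_j^k$ together with the $n^{-\beta}$ prefactor. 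This is routine given the estimates already established, so no new ideas are needed beyond reusing the bookkeeping from Lemma~\ref{lem:calcul_proba_E1}.
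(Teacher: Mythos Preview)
Your approach is exactly the paper's: observe $\mathcal E_2\subseteq\bigcup_{i\neq j}\mathcal D_{i,j}$ and invoke the computation already done in the proof of Lemma~\ref{lem:calcul_proba_E1}, where $\sum_{i\neq j}\mathbb P_{\bs X}(\mathcal D_{i,j})$ was shown to be negligible compared to $\sum_i\mathbb P_{\bs X}(\mathcal E_{1,i}^*)\sim\mathbb P_{\bs X}(\mathcal E_1)$. So the strategy is correct.

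However, your detailed bookkeeping in the $\gamma>1$ case contains a genuine error. You claim that the ratio
\[
\frac{\sum_{i} X_i^{(\rho_n-\nu_n-\delta_n)n}}{\sum_{i} X_i^{(\rho_n-\nu_n+\delta_n)n}}
\]
is $O_{\PP}(1)$. This is false: by Lemma~\ref{lem:approx_sum}$(i)$ both sums are asymptotic to the corresponding power of $X_n^{(1)}$, so the ratio is $\sim (X_n^{(1)})^{-2\delta_n n}=\exp\big(\Theta_{\PP}(\delta_n n^{1-1/\gamma})\big)$, and under the standing assumption $n^{1/\gamma}\ll n\delta_n$ (for $1<\gamma<2$) or $n^{1/2}\ll n\delta_n$ (for $\gamma\geq 2$) this exponent tends to $+\infty$. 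Your last paragraph tries to patch this but the explanation there is also off: the choice $n^{1/\gamma}\ll n\delta_n$ makes the blowup \emph{worse}, not better, and a polynomial factor cannot beat an exponentially growing one.

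The correct mechanism, which is what the paper's $\Delta_n$ computation actually uses, is that you must \emph{not} drop the factor $(X_n^{(1)})^{\varepsilon n}$ when bounding $\sum_{j}\sum_{k>\varepsilon n}X_j^k$. Keeping it, the full product carries $(X_n^{(1)})^{(\varepsilon-2\delta_n)n}\leq 1$ (since $\varepsilon>2\delta_n$ eventually and $X_n^{(1)}\leq 1$), which is precisely what neutralises the blowup and leaves the clean $O_{\PP}(n^{1/\gamma-\beta})$ bound. Once you retain that factor, your argument matches the paper's.
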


\begin{proof}
Note that $\mathbb P_{\bs X}(\mathcal E_2) = \sum_{i\neq j} \mathbb P_{\bs X} (\mathcal D_{i,j})$, and we have already shown in the proof of
Lemma~\ref{lem:calcul_proba_E1} that this sum is negligible in front of $\mathbb P_{\bs X}(\mathcal E_1)$.
\end{proof}

\begin{lem}\label{lem:E4<<E1}
If $\beta+\gamma>2$, then, with high $\mathtt{P}$-probability, $\mathbb{P}_{\bs X}(\mathcal E_4) \ll \mathbb{P}_{\bs X}(\mathcal E_1)$.
\end{lem}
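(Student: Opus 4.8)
The plan is to show that on $\mathcal E_4$ no single site can absorb the macroscopic excess $(\rho-\rho^\star)n$, so that the whole configuration consists of sites with occupancy at most $\varepsilon n$ summing to $m$; since $S_n = \sum Q_i$ typically equals only $\nu_n n \approx \rho^\star n$, reaching $m = \rho_n n \sim \rho n > \rho^\star n$ on $\mathcal E_4$ forces a large-deviation event whose probability decays much faster than $\mathbb P_{\bs X}(\mathcal E_1)$. Concretely, $\mathbb P_{\bs X}(\mathcal E_4) \le \mathbb P_{\bs X}\big(S_n = m,\ \max_i Q_i \le \varepsilon n\big)$, and the strategy is to bound this truncated large-deviation probability using an exponential Chebyshev (tilting) estimate for the independent variables $Q_i$ under $\mathbb P_{\bs X}$.

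First I would introduce, for $t>0$, the truncated moment generating functions $\varphi_i(t) = \mathbb E_{\bs X}\big[\mathtt e^{tQ_i}\indi\{Q_i\le \varepsilon n\}\big] = \sum_{k=0}^{\lfloor\varepsilon n\rfloor} \mathtt e^{tk} p_k X_i^k/\Phi(X_i)$, so that by a union-type bound over the at most $\binom{n}{\cdot}$ ways of distributing the mass, or more cleanly by the standard Chernoff inequality applied to the truncated variables $\tilde Q_i := Q_i \indi\{Q_i \le \varepsilon n\}$,
\[
\mathbb P_{\bs X}(\mathcal E_4) \le \mathbb P_{\bs X}\Big(\sum_{i=1}^n \tilde Q_i \ge m\Big) \le \mathtt e^{-tm} \prod_{i=1}^n \varphi_i(t).
\]
Next I would pick the tilting parameter $t = t_n$ small, of order a constant or slowly decaying, and estimate $\log\varphi_i(t_n)$. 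Writing $\varphi_i(t) \le \Phi(\mathtt e^{t}X_i)/\Phi(X_i)$ (dropping the truncation for an upper bound) and using $\log\Phi(\mathtt e^t x) - \log\Phi(x) = \int_x^{\mathtt e^t x} \Phi'(y)/\Phi(y)\,dy$, a Taylor expansion in $t$ gives $\log\varphi_i(t) \le t\,G(X_i) + \tfrac{t^2}{2}\sup_{y\le \mathtt e^t}\big(y G(y)\big)' + O(t^3(\cdots))$, where $G(x) = x\Phi'(x)/\Phi(x) = \mathbb E_{\bs X}Q_i$. Hence $\sum_i \log\varphi_i(t) \le t\,\nu_n n + C t^2 R_n n$ for a suitable random $R_n$ controlled via the regular variation of $\Phi'$ near $1$ (as in the proof of Lemma~\ref{lem:LLN_nu} and Proposition~\ref{prop:TCL}); when $\gamma\le 1$ one uses the truncation at $\varepsilon n$ to keep the second-order term from blowing up, since only occupancies below $\varepsilon n$ contribute. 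This yields
\[
\mathbb P_{\bs X}(\mathcal E_4) \le \exp\!\big(-t(m - \nu_n n) + C t^2 R_n n\big)
= \exp\!\big(-t(\rho-\rho^\star)n(1+o(1)) + C t^2 R_n n\big),
\]
and optimising over $t$ (either $t$ a small constant, or $t = c/n^{a}$ if $R_n$ grows polynomially) produces a bound of the form $\exp(-c'\, n^{b})$ for some $b>0$. Comparing with Lemma~\ref{lem:calcul_proba_E1}, where $\mathbb P_{\bs X}(\mathcal E_1)$ is of polynomial order $\Theta_{\PP}(n^{1-\beta-\gamma})$ when $\gamma\le 1$ and of stretched-exponential-in-$n$ order $n^{-\beta}(X_n^{(1)})^{(\rho_n-\nu_n)n+\cdots}$ when $\gamma>1$, the stretched-exponential decay of $\mathbb P_{\bs X}(\mathcal E_4)$ dominates in all cases; in the regime $\gamma>1$ one must additionally check that the exponential rate $c' n^b$ beats $(\rho_n-\nu_n)n\,|\log X_n^{(1)}| = \Theta_{\PP}(n^{1-1/\gamma})$, which holds for an appropriate choice of $t_n$ since $X_n^{(1)}\to 1$.

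The main obstacle I anticipate is controlling the second-order (variance-type) term $R_n$ in the tilted estimate uniformly in the disorder when $\gamma \le 1$: the fitnesses $X_i$ can be very close to $1$, making $G(X_i)$ and $(yG(y))'$ large near the top of the sample, so the naive bound $\sum_i \log\varphi_i(t) \le t\nu_n n + Ct^2 n \max_i(\cdots)$ is too lossy. The truncation $Q_i \le \varepsilon n$ is essential here and must be exploited carefully — replacing $\Phi(\mathtt e^t X_i)$ by the genuinely truncated sum $\sum_{k\le \varepsilon n}\mathtt e^{tk}p_k X_i^k$ and using Lemma~\ref{lem:approx_sum} together with Lemma~\ref{lem:extremal_properties_X} to bound $\sum_i X_i^{k}$ for $k$ up to $\varepsilon n$ — so that the contribution of the dangerous high-fitness sites is genuinely cut off at occupancy $\varepsilon n \ll (\rho-\rho^\star)n$ and cannot supply the missing mass. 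Once this truncated bound is in place, choosing $t_n$ appropriately and invoking Lemma~\ref{lem:calcul_proba_E1} for the comparison is routine.
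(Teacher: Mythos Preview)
Your overall strategy --- Chernoff's inequality applied to the truncated variables $\tilde Q_i=Q_i\indi\{Q_i\le\varepsilon n\}$, then comparison with Lemma~\ref{lem:calcul_proba_E1} --- is exactly the paper's approach. The gap is in the implementation, and it is a real one.

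You claim that optimising the tilt gives a bound of the form $\exp(-c'n^b)$ for some $b>0$. When $\gamma\le 1$ this is false, and believing it hides the whole difficulty. Because many fitnesses $X_i$ lie within $O(n^{-1})$ of~$1$, the truncated moment generating function contains a contribution from $k$ near $\varepsilon n$ of order
\[
\sum_{i}\sum_{k\text{ near }\varepsilon n} k^{-\beta}X_i^k\,\mathtt e^{t_n k}
\;\asymp\; n\,(\varepsilon n)^{-\beta-\gamma}\,\mathtt e^{t_n\varepsilon n},
\]
using Lemma~\ref{lem:approx_sum}$(ii)$. For this to stay $o(t_n n)$ you are forced to take $t_n=O(\log n/n)$, and then $t_n(m-\nu_n n)=O(\log n)$ yields only a \emph{polynomial} bound $\mathbb P_{\bs X}(\mathcal E_4)\le n^{-a(\rho-\rho^\star)+o(1)}$. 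Since $\mathbb P_{\bs X}(\mathcal E_1)=\Theta_{\PP}(n^{1-\beta-\gamma})$ in this regime, you now need $a(\rho-\rho^\star)>\beta+\gamma-1$ \emph{and} $a\varepsilon<\beta+\gamma-2$; these are compatible only because $\varepsilon$ was fixed small enough at the outset (the constraint $\varepsilon<\frac{\beta+\gamma-2}{\beta+\gamma}(\rho-\rho^\star)$ stated before the definition of $\mathcal E_1,\ldots,\mathcal E_4$). Your write-up does not use that constraint, and without it the argument cannot close.

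The second point you miss is technical but essential: a single quadratic remainder $Ct^2R_n n$ does not work, because the ``variance'' of $\tilde Q_i$ for a site with $X_i$ close to~$1$ is of order $(\varepsilon n)^{3-\beta}$ (or worse). The paper instead splits the remainder $\mathtt e^{sk}-1-sk$ at $k\approx 1/s$, bounding it by $(sk)^2$ below and by $\mathtt e^{sk}$ above; this produces two sums $S_n^{(1)},S_n^{(2)}$ that are each $o(ns_n)$. Your Taylor expansion in $t$ of $\log\Phi(\mathtt e^tX_i)$ goes wrong immediately because $\Phi(z)$ diverges for $z>1$ and several $\mathtt e^tX_i$ exceed~$1$; the truncation has to be kept throughout, not reinstated at the end.

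Finally, for $\gamma>1$ the correct tilt is disorder-dependent: one takes $s_n=-\log X_n^{(1)}+a\frac{\log n}{n}=\Theta_{\PP}(n^{-1/\gamma})$, so that $\big(X_n^{(1)}\big)^{(\rho_n-\nu_n)n}$ appears explicitly in the upper bound for $\mathbb P_{\bs X}(\mathcal E_4)$ and cancels against the matching factor in $\mathbb P_{\bs X}(\mathcal E_1)$. A deterministic choice of $t_n$ would leave you comparing two stretched exponentials with random rates, which is much harder to control.
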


\begin{proof}
We define the truncated variables 
$\bar Q_i:= Q_i \indi\{Q_i\leq \varepsilon n\}$ and 
$\bar S_n = \sum_{i=1}^n \bar Q_i$.
As $\mathcal{E}_4 \subset \{\bar S_n = m\}$, we have
\[\mathbb P_{\bs X}(\mathcal{E}_4) 
\leq \mathtt{e}^{-sm} \mathbb{E}_{\bs X} \big[ \mathtt{e}^{s\bar S_n} \big]
= \mathtt{e}^{-sm} \prod_{i=1}^n \mathbb{E}_{\bs X}\big[ \mathtt{e}^{s\bar Q_i }\big], 
\qquad \mbox{for every $s>0$.}\]
There exist two constants $K_1, K_2>0$, such that
\[
\mathbb{E}_{\bs X} \mathtt{e}^{s\bar Q_i}
\leq 1+s \mathbb E_{\bs X} \bar Q_i + \sum_{k=1}^{\varepsilon n} \frac{p_k X_i^k}{\Phi(X_i)} \,  (\mathtt{e}^{sk}-1-sk)
\leq  1+ s \mathbb E_{\bs X} Q_i
+ K_1 \sum_{k=1}^{\nicefrac{2\beta}{s}} k^{-\beta} X_i^k (sk)^2
+ K_2 \sum_{k=\nicefrac{2\beta}{s}}^{\varepsilon n} k^{-\beta}X_i^k \mathtt{e}^{sk}.
\]
Allowing $s$ to depend on~$n$, we define, for any sequence $(s_n)$,  the quantities
\[S^{{(1)}}_n := \sum_{i=1}^n \sum_{k=1}^{\nicefrac{2\beta}{s_n}} k^{-\beta} X_i^k (s_nk)^2,\quad
\text{ and }\quad
S_n^{{(2)}}:= \sum_{i=1}^n \sum_{k=\nicefrac{2\beta}{s_n}}^{\varepsilon n} k^{-\beta}X_i^k \mathtt{e}^{s_nk}.\]
We then have
\[
\mathbb{P}_{\bs X}(\mathcal{E}_4)
\leq \exp\big(-s_nm + s_nn\nu_n  + K_1 S_n^{{(1)}} + K_2 S_n^{{(2)}}\big).\]

\vspace{\baselineskip}
\noindent{\bf $\bs{(i)}$ The case $\bs{\gamma\leq 1}$ and $\bs{\beta+\gamma>2}$.}
We fix $s_n:=a\frac{\log n}{n}$, where \smash{$a=\frac{\beta+\gamma}{\rho-\rho^\star}$.}
We first prove that $$S_n^{(1)}= o(ns_n) \qquad \mbox{ as $n\uparrow\infty$.}$$
In view of Lemma~\ref{lem:approx_sum}$(ii)$ and $(iii)$, using that $\nicefrac{2\beta}{s_n} =  o(n^{\nicefrac1{\gamma}})$, we have
\[S^{(1)}_n
= s_n^2 \sum_{k=1}^{\nicefrac{2\beta}{s_n}} k^{2-\beta} \sum_{i=1}^n X_i^k
= n s_n^2 \sum_{k=1}^{\nicefrac{2\beta}{s_n}} k^{2-\beta-\gamma} U_k^{(n)}
\leq \cst.\ ns_n^2 \sum_{k=1}^{\nicefrac{2\beta}{s_n}} k^{2-\beta-\gamma},\]
from which we infer that 
$S_n^{(1)} = o(ns_n)$. 
Next, we prove that
$$S_n^{(2)}= o(n s_n) \qquad \mbox{ as $n\uparrow\infty$.}$$
Denote by $u_k:= k^{-\beta}X_i^k \mathtt{e}^{s_nk}$. Observe that, for all $k\geq \frac{2\beta}{s_n}$, we have
$\frac{u_k}{u_{k+1}}\leq \frac{\mathtt{e}^{-\nicefrac{s_n}{2}}}{X_i},$ and thus 
$$u_k \leq \left(\frac{\mathtt{e}^{-\nicefrac{s_n}{2}}}{X_i}\right)^{\lfloor\varepsilon n\rfloor - k} u_{\lfloor\varepsilon n\rfloor}.$$
This implies that 
\[
\sum_{k=\nicefrac{2\beta}{s_n}}^{\varepsilon n} k^{-\beta}X_i^k \mathtt{e}^{s_nk}
\leq \lfloor\varepsilon n\rfloor^{-\beta} \, \mathtt{e}^{s_n \lfloor\varepsilon n\rfloor}
\sum_{k=\nicefrac{2\beta}{s_n}}^{\varepsilon n} X_i^k \left(\mathtt{e}^{-\nicefrac{s_n}{2}}\right)^{\lfloor\varepsilon n\rfloor - k}
\leq \cst.n^{a\varepsilon-\beta} \;\frac{X_i^{\nicefrac{2\beta}{s_n}}}{1-\mathtt{e}^{-\nicefrac{s_n}{2}}}.
\]
Therefore, using $1-\mathtt{e}^{-\nicefrac{s_n}{2}} \geq \nicefrac{s_n}{2}$, and 
Lemma~\ref{lem:approx_sum}$(ii)$ in conjunction with  $\nicefrac{2\beta}{s_n} \ll n^{\nicefrac1{\gamma}}$, we get
\[
S_n^{(2)}
\leq \cst. \frac{n^{a\varepsilon-\beta}}{s_n} \sum_{i=1}^n X_i^{\nicefrac{2\beta}{s_n}}
= O_\PP\big(n^{1+a\varepsilon-\beta} s_n^{\gamma-1}\big),
\]
and, since $a\varepsilon < \beta+\gamma-2$, this  implies $S_n^{(2)}=o(ns_n)$  as required. 
%
Summarising, we have shown that
$$\mathbb{P}_{\bs X}(\mathcal{E}_4)
\leq \exp(-s_nm + s_nn \nu_n + o(s_nn))
= n^{-a(\rho-\rho^{\star}+o(1))}.$$
Recall that $\mathbb{P}_{\bs X}(\mathcal{E}_1)= n^{1-\beta-\gamma} (1+o(1))$.
As $a(\rho-\rho^{\star})> \beta+\gamma-1$,
we get that $\mathbb{P}_{\bs X}(\mathcal E_4) \ll \mathbb{P}_{\bs X}(\mathcal{E}_1)$,
as $n\uparrow\infty$.

\vspace{\baselineskip}
\noindent{\bf $\bs{(ii)}$ The case $\bs{\gamma > 1}$.}
In that case, choose $s_n = -\log X_n^{_{(1)}} + \frac{a\log n}{n} = \Theta_{\PP}(n^{-\nicefrac1{\gamma}})$
for some positive $a$ satisfying
$$\sfrac{\beta}{\rho-\rho^\star}<a< \sfrac{\beta-1}{\varepsilon \gamma}.$$
We now show that $$S_n^{(1)}=o(1).$$  We have
\[S_n^{(1)}
\leq \cst.\ s_n^2 \sum_{k=1}^{\frac{2\beta}{s_n}} k^{2-\beta} \sum_{i=1}^n X_i^k
= \cst.\ s_n^2 \bigg(\sum_{k=1}^{\frac{2\beta}{s_n \log n}} k^{2-\beta} \sum_{i=1}^n X_i^k
+ \sum_{k=\frac{2\beta}{s_n \log n}}^{\frac{2\beta}{s_n}} k^{2-\beta} \sum_{i=1}^n X_i^k\bigg).
\]
Using the notation of Lemma~\ref{lem:approx_sum}$(ii)$, we get
\[S_n^{(1)}
\leq \cst.\ s_n^2 \bigg(\sum_{k=1}^{\frac{2\beta}{s_n \log n}} n k^{2-\beta-\gamma} U_k^{(n)}
+ \sum_{k=\frac{2\beta}{s_n \log n}}^{\frac{2\beta}{s_n}} k^{2-\beta} \sum_{i=1}^n X_i^{\frac{2\beta}{s_n \log n}}\bigg).
\]
There exists an integer~$M_n$ such that \smash{$\max\big\{ U_k^{(n)} \colon k\in\{1,\dots,\nicefrac{2\beta}{s_n \log n}\} \big\}  = U_{M_n}^{(n)}$}.
Using Lemma~\ref{lem:approx_sum}$(ii)$ in conjunction with $M_n\leq \nicefrac{2\beta}{s_n \log n} \ll n^{\nicefrac1{\gamma}}$, we get that \smash{$U_{M_n}^{(n)}\sim \alpha_1\Gamma(\gamma+1)$}. Thus, using again Lemma~\ref{lem:approx_sum}$(ii)$ for the second term of the sum, we get
\[
S_n^{(1)}
\leq \cst.\ n s_n^2 \bigg(  \sum_{k=1}^{\frac{2\beta}{s_n \log n}} k^{2-\beta-\gamma}
+ \left(\sfrac{2\beta}{s_n \log n}\right)^{-\gamma} \sum_{k=\frac{2\beta}{s_n \log n}}^{\frac{2\beta}{s_n}} k^{2-\beta}\bigg).
\]
Starting from this, a simple calculation gives $S_n^{(1)} = o(1)$, as claimed. We now show that
$$S_n^{(2)}=o(1).$$
To this end, recall the definition of $s_n$, then split the sum and estimate
\begin{align*}
S_n^{(2)}
&= \sum_{k=\nicefrac{2\beta}{s_n}}^{\varepsilon n} k^{-\beta} \mathtt{e}^{s_nk} \sum_{i=1}^n X_i^k
= \sum_{k=\nicefrac{2\beta}{s_n}}^{\varepsilon n} k^{-\beta} \mathtt{e}^{ak\frac{\log n}{n}} 
\sum_{i=1}^n \Big(\frac{X_i}{X_n^{_{(1)}}}\Big)^k
\\
&\leq V^{(n)}_{\nicefrac{2\beta}{s_n}}\, 
 \mathtt{e}^{an^{\nicefrac1{\gamma}-1}\log n} 
\sum_{k=\nicefrac{2\beta}{s_n}}^{n^{\nicefrac1{\gamma}}\log^2 n} k^{-\beta}
+ \mathtt{e}^{a\varepsilon\log n} \sum_{k=n^{\nicefrac1{\gamma}}\log^2 n}^{\varepsilon n} k^{-\beta} V_k^{(n)},
\end{align*}
using the notation and result of Lemma~\ref{lem:approx_sum}$(i)$. Using again Lemma~\ref{lem:approx_sum}$(i)$,
for all $k\geq n^{\nicefrac1{\gamma}} \log^2n$, we have \smash{$V_k^{_{(n)}}\leq V_{n^{\nicefrac1{\gamma}}\log^2n}^{_{(n)}}$} and the 
right hand side converges to one. Using also  Lemma~\ref{lem:approx_sum}$(iii)$ we get, 
\[
S_n^{(2)}
\leq o(\log n) \left(\frac{2\beta}{s_n}\right)^{1-\beta}
+ \cst.n^{a\varepsilon} \left(n^{\nicefrac1{\gamma}}\log^2 n\right)^{1-\beta}
\leq o\left(n^{\frac{1-\beta}{\gamma}} \log n\right) + o\left(n^{a\varepsilon+\frac{1-\beta}{\gamma}}\right)
= o(1).
\]
To complete the proof recall that
\[\mathbb P_{\bs X}(\mathcal E_4) \leq \exp\left(-(\rho_n-\nu_n) ns_n + o(1)\right)
= n^{-a(\rho-\rho^{\star})+o(1)} \big(X_n^{_{(1)}}\big)^{(\rho_n-\nu_n)n},\]
and $\mathbb P_{\bs X}(\mathcal E_1) \geq \texttt{cst}.n^{-\beta} (X_n^{_{(1)}})^{(\rho_n-\nu_n+\delta_n)n}$.
Therefore,
\[\frac{\mathbb P_{\bs X}(\mathcal E_4)}{\mathbb P_{\bs X}(\mathcal E_1)}
\leq n^{\beta - a(\rho-\rho^{\star})+o(1)} \big(X_n^{_{(1)}}\big)^{-\delta_n n}.\]
Since $X_n^{(1)} = 1-\Theta_\PP(n^{-\nicefrac1{\gamma}})$, we have that $(X_n^{(1)})^{-\delta_n n} = \exp(\Theta_\PP(\delta_n n^{1-\nicefrac1{\gamma}}))$. If $1<\gamma < 2$, we have  that $\delta_n n^{1-\nicefrac1{\gamma}} \to 0$, which implies
$\mathbb P_{\bs X}(\mathcal E_4) \ll \mathbb P_{\bs X}(\mathcal E_1)$ by choice of~$a$.
If $\gamma \geq 2$, we conclude the proof using the better bound for $\mathbb P_{\bs X} (\mathcal E_1)$, 
which was proved in Lemma~\ref{lem:calcul_proba_E1}$(ia)$.
\end{proof}

\begin{lem}\label{lem:E3<<E1}
If $\beta+\gamma>2$, then, with high $\mathtt{P}$-probability, $\mathbb{P}_{\bs X}(\mathcal E_3) \ll \mathbb{P}_{\bs X}(\mathcal E_1)$.
\end{lem}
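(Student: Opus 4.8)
The event $\mathcal E_3$ is the ``misbehaved'' event where some site carries a macroscopic load $>\varepsilon n$ but no site is in the window $|Q_i-(m-\nu_n n)|\le \delta_n n$ where the condensate ought to be. The plan is to decompose $\mathcal E_3$ according to the index $j$ of the (maximal) macroscopically loaded site. More precisely, write $\mathcal E_3\subset\bigcup_{j=1}^n \mathcal E_{3,j}^*$, where $\mathcal E_{3,j}^*=\{S_n=m,\ Q_j>\varepsilon n,\ |Q_j-(m-\nu_n n)|>\delta_n n\}$ as already defined in the excerpt. Conditioning on $Q_j=k$ with $k>\varepsilon n$ and $|k-(m-\nu_n n)|>\delta_n n$, the remaining mass $m-k$ must be carried by $S_{n-1}^{(j)}=\sum_{i\ne j}Q_i$. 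So
\[
\mathbb P_{\bs X}(\mathcal E_{3,j}^*)=\sum_{\heap{k>\varepsilon n}{|k-(m-\nu_n n)|>\delta_n n}}\frac{p_k X_j^k}{\Phi(X_j)}\ \mathbb P_{\bs X}\big(S_{n-1}^{(j)}=m-k\big),
\]
and then $\mathbb P_{\bs X}(S_{n-1}^{(j)}=m-k)\le 1$ crudely, or more usefully one bounds it by the local limit / concentration behaviour of $S_{n-1}^{(j)}$. Summing over $j$ and using $p_k=O(k^{-\beta})=O((\varepsilon n)^{-\beta})$ together with $0<p_0\le\Phi(X_j)$, one gets
\[
\sum_{j=1}^n\mathbb P_{\bs X}(\mathcal E_{3,j}^*)\le \cst.\,n^{-\beta}\sum_{j=1}^n\sum_{k>\varepsilon n}X_j^k\ \sup_{k>\varepsilon n}\mathbb P_{\bs X}\big(S_{n-1}^{(j)}=m-k\big),
\]
and the inner double sum $\sum_j\sum_{k>\varepsilon n}X_j^k$ is controlled via Lemma~\ref{lem:approx_sum} exactly as in the proof of Lemma~\ref{lem:calcul_proba_E1}.

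The point where $\mathcal E_3$ differs from $\mathcal E_2$ (handled in Lemma~\ref{lem:E2<<E1}) is that there is \emph{no} second site pinned near $m-\nu_n n$, so one cannot simply reuse the $\mathcal D_{i,j}$ estimate. Instead I would split the $k$-sum into two ranges. \textbf{(a) The range $k$ close to but above $\varepsilon n$, say $\varepsilon n<k\le (m-\nu_n n)-\delta_n n$:} here $m-k$ is still of order $n$ (at least $\nu_n n+\delta_n n$), so by Lemmas~\ref{lem:LLN_nu} and~\ref{lem:LLN_Q} the event $\{S_{n-1}^{(j)}=m-k\}$ is a large-deviation event for the sum of the remaining independent variables, whose probability decays faster than any power of $n$ — in fact one should exhibit an exponential or stretched-exponential bound using the exponential Chebyshev argument already deployed in Lemma~\ref{lem:E4<<E1} (truncating at level $\varepsilon n$). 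This makes the contribution of range (a) negligible against $\mathbb P_{\bs X}(\mathcal E_1)$, which is only polynomially small ($n^{1-\beta-\gamma}$ if $\gamma<1$, $\Theta_\PP(n^{-\beta})$ if $\gamma=1$) or stretched-exponential with a controlled rate (if $\gamma>1$). \textbf{(b) The range $k>(m-\nu_n n)+\delta_n n$:} here $m-k<\nu_n n-\delta_n n$, i.e.\ the remaining sites carry \emph{fewer} particles than their typical total $\nu_n n$; again this is a large-deviation event for $S_{n-1}^{(j)}$ on the lower side, and the same exponential bound applies. (One must also cover the degenerate sub-case $k>m$, which forces $m-k<0$ and hence probability zero.)

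The main obstacle is obtaining a sufficiently strong \emph{uniform-in-$j$ and uniform-in-$k$} upper bound on $\mathbb P_{\bs X}(S_{n-1}^{(j)}=m-k)$ in the large-deviation regimes (a) and (b), good enough to beat the $\sum_j\sum_{k>\varepsilon n}X_j^k=\Theta_\PP(n^{1-\gamma})$ prefactor (when $\gamma<1$) and the $n^{-\beta}$ factor, and — in the case $\gamma>1$ — to beat the sub-polynomial lower bound $\mathbb P_{\bs X}(\mathcal E_1)\ge\cst.\,n^{-\beta}(X_n^{(1)})^{(\rho_n-\nu_n)n}$ from Lemma~\ref{lem:calcul_proba_E1}$(i)$. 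I expect this to follow by re-running the truncated exponential-moment estimate of Lemma~\ref{lem:E4<<E1}: with $\bar Q_i=Q_i\indi\{Q_i\le\varepsilon n\}$ and $\bar S_{n-1}^{(j)}=\sum_{i\ne j}\bar Q_i$, one has $\{S_{n-1}^{(j)}=\ell,\ \text{all }Q_i\le\varepsilon n\ (i\ne j)\}\subset\{\bar S_{n-1}^{(j)}=\ell\}$ and then $\mathbb P_{\bs X}(\bar S_{n-1}^{(j)}=\ell)\le \mathtt e^{-s\ell}\prod_{i\ne j}\mathbb E_{\bs X}\mathtt e^{s\bar Q_i}$ for range (b), and symmetrically $\mathbb P_{\bs X}(\bar S_{n-1}^{(j)}=\ell)\le \mathtt e^{s\ell}\prod_{i\ne j}\mathbb E_{\bs X}\mathtt e^{-s\bar Q_i}$ for the lower-tail part of range (a); choosing $s$ of the appropriate order (a constant multiple of $\log n/n$ when $\gamma\le 1$, of order $n^{-1/\gamma}$ when $\gamma>1$, mirroring Lemma~\ref{lem:E4<<E1}) and invoking the estimates on $S_n^{(1)},S_n^{(2)}$ proved there gives the required decay. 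Assembling the two ranges and summing over $j$ then yields $\mathbb P_{\bs X}(\mathcal E_3)\ll\mathbb P_{\bs X}(\mathcal E_1)$ with high $\mathtt P$-probability, which is the claim. Finally, combining Lemmas~\ref{lem:calcul_proba_E1}, \ref{lem:E2<<E1}, \ref{lem:E4<<E1} and this lemma shows $\mathbb P_{\bs X}(\mathcal E_1)/\mathbb P_{\bs X}(S_n=m)\to1$, from which Theorem~\ref{th:condensation} follows by transferring to the canonical ensemble via conditioning on $\{S_n=m\}$.
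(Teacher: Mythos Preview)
Your plan is on track but overcomplicates the weak-disorder case and has two real gaps in the strong-disorder case.

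\textbf{When $\gamma\le 1$.} No exponential machinery is needed. Bound $p_k\le\cst\,(\varepsilon n)^{-\beta}$, $X_j^k\le X_j^{\varepsilon n}$ for $k\ge\varepsilon n$, $\Phi(X_j)\ge p_0$, and then simply \emph{sum the point probabilities over the forbidden $k$-range}:
\[
\mathbb P_{\bs X}(\mathcal E_3)\ \le\ \cst\,(\varepsilon n)^{-\beta}\sum_{j=1}^n X_j^{\varepsilon n}\,\mathbb P_{\bs X}\big(|S_{n-1}^{(j)}-\nu_n n|>\delta_n n\big)
\ =\ o(1)\cdot n^{-\beta}\sum_{j} X_j^{\varepsilon n}\ =\ o\big(n^{1-\beta-\gamma}\big),
\]
using only the law of large numbers (the defining property of~$\delta_n$) and Lemma~\ref{lem:approx_sum}. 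This already beats $\mathbb P_{\bs X}(\mathcal E_1)=\Theta_\PP(n^{1-\beta-\gamma})$. Incidentally, your assertion that the large-deviation probability ``decays faster than any power of $n$'' is false for the \emph{untruncated} $Q_i$'s, which are heavy-tailed; fortunately nothing that strong is required.

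\textbf{When $\gamma>1$.} Here $\mathbb P_{\bs X}(\mathcal E_1)$ is stretched-exponentially small and Cram\'er-type bounds are genuinely required, but your decomposition $\bigcup_j\mathcal E_{3,j}^*$ does not support them as written. First, the upper-tail bound on $S_{n-1}^{(j)}$ (your range~(a)) with tilt parameter $s>0$ requires truncating the $Q_i$'s, and truncation at level $\varepsilon n$ is only legitimate when all other $Q_i\le\varepsilon n$---a condition $\mathcal E_{3,j}^*$ does not impose. The paper repairs this by a different split: the overshoot part $\mathcal E_{3,1}=\{S_n=m,\ \exists i:\ Q_i\ge m-\nu_n n+\delta_n n\}$ is handled by the same elementary $o(1)$ trick as above (it reduces to the \emph{lower} tail of $S_{n-1}^{(i)}$, where no truncation is needed), while on $\mathcal E_{3,2}^{(j)}=\{S_n=m,\ \forall i:\ Q_i<c_n n,\ Q_j>\varepsilon n\}$ with $c_n=\rho_n-\nu_n-\delta_n$ the truncation level $c_n n$ is \emph{built into the event}, so the exponential-moment bound is justified. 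Second, on $\mathcal E_{3,2}^{(j)}$ one must distinguish $j=J_n$ from $j\ne J_n$: the tilt is taken as $s_n=-\log X_n^{(2)}$ when the fittest site has been removed, and $s_n=-\log X_n^{(1)}$ otherwise. Only this yields the decisive factors $(X_n^{(2)}/X_n^{(1)})^{\delta_n n}$, resp.\ $(X_n^{(2)}/X_n^{(1)})^{\varepsilon n}$, that beat the lower bound $\mathbb P_{\bs X}(\mathcal E_1)\ge\cst\,n^{-\beta}(X_n^{(1)})^{(\rho_n-\nu_n)n}$ from Lemma~\ref{lem:calcul_proba_E1}$(i)$; a single choice of $s$ uniform in~$j$, as in Lemma~\ref{lem:E4<<E1}, does not give a sharp enough comparison.
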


\begin{proof}
$\bs{(i)}$ {\bf The case} $\bs{\gamma \leq 1}$ {\bf and } $\bs{\beta+\gamma>2}$.
In this case $\mathbb P_{\bs X}(\mathcal E_1) = \Theta_\PP(n^{1-\beta-\gamma})$.
We get
\begin{align*}
\mathbb P_{\bs X}(\mathcal E_3)
&\leq \sum_{j=1}^n \sum_{\heap{|k-(m-\nu_n n)|>\delta_n n}{k>\varepsilon n}}
\frac{p_k X_j^k}{\Phi(X_j)} \mathbb P_{\bs X}\Big(\sum_{i\neq j} Q_i=m-k \Big)\\
&\leq \cst.  (\varepsilon n)^{-\beta} \sum_{j=1}^n X_j^{\varepsilon n} \,
\mathbb P_{\bs X}(|S_{n-1}^{(j)}-\nu_n n| > \delta_n n)
=o\left(n^{1-\beta-\gamma}\right),
\end{align*}
in view of Lemma~\ref{lem:approx_sum}$(ii)$ and Lemma~\ref{lem:LLN_Q}.

\vspace{\baselineskip}
$\bs{(ii)}$ {\bf The case} $\bs{\gamma > 1}$.
We decompose the event $\mathcal E_3 \subset \mathcal E_{3,1} \cup \bigcup_{j=1}^n \mathcal E_{3,2}^{(j)}$ where
\[\mathcal E_{3,1} = \{S_n = m \ ;\ \exists i, j\in\{1, \ldots, n\}\text{ such that }
Q_i\geq m-\nu_n n+\delta_n n \text{ and }
Q_j >\varepsilon n
\}\]
and 
\[\mathcal E_{3,2}^{(j)} = \{S_n = m \text{ and } 
Q_i < m-\nu_n n-\delta_n n \ \forall i\in\{1, \ldots, n\}\text{ and } Q_j >\varepsilon n
\}, \mbox{ for $j\in\{1,\ldots,n\}$.} \]
Note that, in view of Lemma~\ref{lem:approx_sum}$(i)$ and our choice of $\delta_n$,
\begin{align*}
\mathbb P_{\bs X}(\mathcal E_{3,1})
&\leq \sum_{i=1}^n \sum_{k\geq m-\nu_n n+\delta_n n}
\frac{p_k X_i^k}{\Phi(X_i)} \mathbb P_{\bs X}\Big(\sum_{\heap{j=1}{j\neq i}}^n Q_j = m-k\Big)\\
&\leq \cst. n^{-\beta} \sum_{i=1}^n X_i^{(\rho_n-\nu_n+\delta_n) n} \mathbb P_{\bs X}\Big(\sum_{\heap{j=1}{j\neq i}}^n Q_j -\nu_n n \leq -\delta_n n\Big)
= o\big(n^{-\beta}\big)  \big(X_n^{_{(1)}}\big)^{(\rho_n-\nu_n+\delta_n) n}.
\end{align*}
Recalling the lower bound in Lemma~\ref{lem:calcul_proba_E1}$(i)$ we get 
$\mathbb P_{\bs X}(\mathcal E_{3,1}) \ll \mathbb P_{\bs X}(\mathcal E_1)$.
\smallskip

We now focus on the estimate for the events~$\mathcal E_{3,2}^{(j)}$.
We first deal with the summand $j=J_n$, the index of the site carrying the largest fitness. 
Abbreviate $c_n:= \rho_n-\nu_n-\delta_n$ and denote, for $k>\varepsilon n$, 
$$\mathcal E_{3,2}^{k} = \big\{\forall i\neq J_n\ Q_i<c_n n  \text{ and }\sum_{i\neq J_n} Q_i = m-k\big\}.$$ 
Then, letting $s_n= -\log X_n^{_{(2)}}$ 
and $\bar Q_i = Q_i \indi\{Q_i < c_n n\}$, we get from Markov's inequality
\[
\mathbb P_{\bs X}(\mathcal E_{3,2}^{k})
\leq \mathtt e^{-(m-k)s_n} \prod_{i\neq J_n} \mathbb E_{\bs X} \left[\mathtt e^{s_n \bar Q_i}\right].
\]
Observe that, for all $i\neq J_n$, we have
\begin{align*}
\mathbb E_{\bs X} \left[\mathtt e^{s_n \bar Q_i}\right]
&\leq 1+ s_n \,\mathbb E_{\bs X} \bar Q_i + \sum_{\ell < c_n n} \frac{p_{\ell} X_i^{\ell}}{\Phi(X_i)} (\mathtt e^{s_n \ell}-1-s_n \ell)\\
&\leq 1+ s_n \,\mathbb E_{\bs X} Q_i + K_1 \sum_{\ell \leq \nicefrac{2\beta}{s_n}} \ell^{-\beta} X_i^{\ell} (s_n \ell)^2
+ K_2 \sum_{\nicefrac{2\beta}{s_n} < \ell < c_n n} \ell^{-\beta} X_i^{\ell} \mathtt e^{s_n \ell},
\end{align*}
where $K_1$ and $K_2$ are two positive constants that do not depend on $i$.
Thus,
\[\mathbb P_{\bs X}(\mathcal E_{3,2}^{k})
\leq \exp\left(-(m-k-\nu_n n)s_n + K_1 S_n^{(1)} + K_2 S_n^{(2)}\right),
\]
where
\[S_n^{(1)}:= \sum_{i\neq J_n} \sum_{\ell \leq \nicefrac{2\beta}{s_n}} \ell^{-\beta} X_i^{\ell} (s_n \ell)^2
\quad \text{ and }\quad
S_n^{(2)}:= \sum_{i\neq J_n} \sum_{\nicefrac{2\beta}{s_n} < \ell < c_n n} \ell^{-\beta} X_i^{\ell} \mathtt e^{s_n \ell}.\]
Note that $S_n^{_{(1)}}$ and $S_n^{_{(2)}}$ are independent of $k$.
We have already encountered $S_n^{_{(1)}}$ in the proof of Lemma~\ref{lem:E4<<E1}, 
and proved that $S_n^{_{(1)}} = o(1)$. The sum $S_n^{_{(2)}}$ is slightly different than the one studied 
in the proof of Lemma~\ref{lem:E4<<E1}, but the same calculation yields $S_n^{_{(2)}} = O_\PP(n^{(1-\beta)/\gamma}) = o(1)$.  Summarising, we see that
\[\mathbb P_{\bs X}(\mathcal E_{3,2}^{k}) \leq \exp\big(-(m-k-\nu_n n)s_n + o(1)\big)
= \big(X_n^{_{(2)}}\big)^{(\rho_n-\nu_n)n-k}(1+o(1)),\]
where the $o(1)$-term does not depend on $k$. Thus,
\begin{align*}
\mathbb P_{\bs X}\big( \mathcal E_{3,2}^{(J_n)}\big)
& =   \sum_{\varepsilon n<k<c_n n} \mathbb P_{\bs X}\big( Q_{J_n}=k\big) \,  \mathbb P_{\bs X}(\mathcal E_{3,2}^{k})\\
& \leq \cst. \big(X_n^{_{(2)}}\big)^{(\rho_n-\nu_n)n}
\sum_{\varepsilon n<k<c_n n} k^{-\beta} \Big(\frac{X_n^{_{(1)}}}{X_n^{_{(2)}}}\Big)^k
\leq O_\PP\big( n^{\nicefrac1{\gamma}-\beta} \big) \big(X_n^{_{(2)}}\big)^{(\rho_n-\nu_n)n}  \Big(\frac{X_n^{_{(1)}}}{X_n^{_{(2)}}}\Big)^{c_n n},
\end{align*}
where we have used that $\frac{X_n^{_{(1)}}}{X_n^{_{(2)}}} = 1+\Theta_\PP(n^{-\nicefrac{1}{\gamma}})$ by Lemma~\ref{lem:extremal_properties_X}.
\smallskip

Now assume that $\gamma >2$. Then, in view of the lower bound proved in Lemma~\ref{lem:calcul_proba_E1}$(ia)$, we have 
\[
\frac{\mathbb P_{\bs X}( \mathcal E_{3,2}^{(J_n)})}{\mathbb P_{\bs X}(\mathcal E_1)}
\leq O_\PP\big( n^{\nicefrac1{\gamma}} \big) \Big(\frac{X_n^{_{(2)}}}{X_n^{_{(1)}}}\Big)^{\delta_n n}
\leq O_\PP\big( n^{\nicefrac1{\gamma}} \big) \mathtt e^{-\Theta_\PP( \delta_n n^{1-\nicefrac1{\gamma}})}
= o(1),
\]
because $\delta_n n^{1-\nicefrac1{\gamma}}\to \infty$.
If $1<\gamma\leq 2$, we use the lower bound proved in Lemma~\ref{lem:calcul_proba_E1}$(ib)$ for $\delta_n n \gg \omega_n \gg n^{\nicefrac1{\gamma}}$, and get
\[
\frac{\mathbb P_{\bs X}( \mathcal E_{3,2}^{(J_n)})}{\mathbb P_{\bs X}(\mathcal E_1)}
\leq O_\PP\big(n^{\nicefrac1{\gamma}} \big) \Big(\frac{X_n^{_{(2)}}}{X_n^{_{(1)}}}\Big)^{\delta_n n} 
\big(X_n^{_{(1)}}\big)^{-\omega_n}
\leq  O_\PP\big( n^{\nicefrac1{\gamma}}\big)  \mathtt e^{-\Theta_\PP(\delta_n n^{1-\nicefrac1{\gamma}})+
 \Theta_\PP(\omega_n n^{-\nicefrac1{\gamma}})}
= o(1).
\]
It remains to investigate the other summands, corresponding to~$j\not= J_n$. The same argument as above, with $X_j$ playing the role of $X_n^{_{(1)}}$
and $X_n^{_{(1)}}$ playing the role of $X_n^{_{(2)}}$, yields
\begin{align*}
\mathbb P_{\bs X}\big( \mathcal E_{3,2}^{(j)}\big)
&\leq \cst. \big(X_n^{_{(1)}}\big)^{(\rho_n-\nu_n)n}
\sum_{\varepsilon n<k<c_n n} k^{-\beta} \Big(\frac{X_n^{_{(2)}}}{X_n^{_{(1)}}}\Big)^k
\leq O_\PP\big( n^{\nicefrac1{\gamma}-\beta}\big)  \big(X_n^{_{(1)}}\big)^{(\rho_n-\nu_n)n}  \Big(\frac{X_n^{_{(2)}}}{X_n^{_{(1)}}}\Big)^{\varepsilon n} .
\end{align*}
In the case $\gamma >2$ we can use Lemma~\ref{lem:calcul_proba_E1}$(ia)$ and Lemma~\ref{lem:extremal_properties_X} again and get 
\[
\frac{\sum_{j\not= J_n}\mathbb P_{\bs X}( \mathcal E_{3,2}^{(j)})}{\mathbb P_{\bs X}(\mathcal E_1)}
\leq O_\PP\big( n^{1+\nicefrac1{\gamma}}\big)  \Big(\frac{X_n^{_{(2)}}}{X_n^{_{(1)}}}\Big)^{\varepsilon n}
\leq O_\PP\big( n^{1+\nicefrac1{\gamma}}\big)  \mathtt e^{-\Theta_\PP(\varepsilon n^{1-\nicefrac1{\gamma}})}
= o(1).
\]
If $1<\gamma\leq 2$, we use again Lemma~\ref{lem:calcul_proba_E1}$(ib)$ with $\delta_n n \gg \omega_n \gg n^{\nicefrac1{\gamma}}$, and get
\[
\frac{\sum_{j\not= J_n}\mathbb P_{\bs X}( \mathcal E_{3,2}^{(j)})}{\mathbb P_{\bs X}(\mathcal E_1)}
\leq  O_\PP\big(n^{1+\nicefrac1{\gamma}}\big) \big(X_n^{_{(1)}}\big)^{-\omega_n} \Big(\frac{X_n^{_{(2)}}}{X_n^{_{(1)}}}\Big)^{\varepsilon n}
\leq O_\PP\big(n^{1+\nicefrac1{\gamma}}\big) \mathtt e^{-\Theta_\PP( \varepsilon n^{1-\nicefrac1{\gamma}})+ \Theta_\PP( \omega_n n^{-\nicefrac1{\gamma}})}
= o(1),
\]
as required to prove the claim.
\end{proof}

\begin{proof}[Proof of Theorem~\ref{th:condensation}]
We have proved through Lemmas~\ref{lem:E2<<E1},~\ref{lem:E4<<E1} and~\ref{lem:E3<<E1} 
that, if $\beta+\gamma \geq 2$, we have
$\mathbb{P}_{\bs X}(S_n = m) = (1+o(1))\, \mathbb{P}_{\bs X}(\mathcal{E}_1),$ as $n\uparrow\infty$ and
$\nicefrac{m}{n}\to \rho>\rho^{\star}$. This proves Theorem~\ref{th:condensation}.
\end{proof}

\section{Intermediate symmetry-breaking and the Gamma law}\label{sec:gammas}

\begin{proof}[Proof of Theorem~\ref{th:I_n}]\ \\[-3mm]

{\bf $\bs{(i)}$ The case $\bs{\gamma>1}$.}
We have shown that \smash{$\mathbb{P}_{\bs X}(\mathcal{E}_{1, J_n}^* \st S_n = m) \to 1$} when $n\uparrow\infty$.
Thus, with high probability, the condensate is located at index $J_n$ and its rank is by definition one.

{\bf $\bs{(ii)}$ The case $\bs{\gamma<1}$.}
Let $a, b>0$. Then, by Lemma~\ref{lem:calcul_proba_E1i*},
\begin{align*}
\mathbb{P}_{\bs X}\Big(\big( & \sfrac{K_n}{n^{1-\gamma}}\big)^{\nicefrac1{\gamma}} \in [a,b] \mbox{ and } S_n = m\Big)
= \mathbb{P}_{\bs X}\big(a^{\gamma}n^{1-\gamma} \leq K_n \leq b^{\gamma}n^{1-\gamma}\mbox{ and } 
S_n = m\big)\\
& =  (1+o(1)) \,  \sum_{\heap{i \,\text{such that}}{X_n^{(\lceil a^{\gamma} n^{1-\gamma} \rceil)}\leq X_i \leq X_n^{(\lfloor b^{\gamma} n^{1-\gamma} \rfloor)}} }
\mathbb{P}_{\bs X} (\mathcal{E}_{1,i}^*) \\
& \approx  (1+o(1)){\alpha_2(\rho-\rho^\star)^{-\beta}n^{-\beta}} \, 
\sum_{i=a^{\gamma} n^{1-\gamma}}^{b^{\gamma} n^{1-\gamma}} 
\frac{\big(X_n^{_{(i)}}\big)^{(\rho_n-\nu_n\mp \delta_n)n}}{\Phi\big(X_n^{_{(i)}}\big)} \\
&\approx  (1+o(1)){\alpha_2(\rho-\rho^\star)^{-\beta}n^{-\beta}}\, 
\int_{a^{\gamma} n^{1-\gamma}}^{b^{\gamma} n^{1-\gamma}+1} 
\frac{\big(X_n^{(\lfloor x\rfloor)}\big)^{(\rho_n-\nu_n \mp \delta_n)n}}{\Phi\big(X_n^{(\lfloor x\rfloor)}\big)} \, dx \\
&\approx   (1+o(1)){\alpha_2(\rho-\rho^\star)^{-\beta}n^{1-\beta-\gamma}} \, 
\int_{a}^{b+o(1)} 
\frac{\big(X_n^{(\lfloor y^{\gamma} n^{1-\gamma}\rfloor)}\big)^{(\rho_n-\nu_n \mp \delta_n)n}}{\Phi\big(X_n^{(\lfloor y^{\gamma} n^{1-\gamma}\rfloor)}\big)} \, \gamma y^{\gamma-1} \, dy.
\end{align*}

Note that, in view of Assumption~\eqref{eq:rvmu}, 
$\mathbb E\big[n^{\gamma-1}\big|\{i \colon X_i\geq 1-\nicefrac{x}{n}\}\big|\big] \sim \alpha_1 x^{\gamma}$
and $\Var \big[n^{\gamma-1}\big|\{i \colon X_i\geq 1-\nicefrac{x}{n}\}\big|\big] =o(1)$ when $n\uparrow\infty$.
Hence, by Chebyshev's inequality, for all $x \geq 0$, in $\mathtt P$-probability,
\[n^{\gamma-1} \Big|\big\{i \colon X_i\geq 1-\nicefrac{x}{n}\big\}\Big|
\to \alpha_1 x^{\gamma}.\]
Thus, in $\mathtt P$-probability,
$X_n^{(\lfloor y^\gamma n^{1-\gamma} \rfloor)} \sim 1-\frac{y}{n{\alpha_1}^{\nicefrac{1}{\gamma}}}$  as $n\uparrow\infty$,
which implies
\begin{align*}
\mathbb{P}_{\bs X}\Big(\big(\sfrac{K_n}{n^{1-\gamma}}\big)^{\nicefrac1{\gamma}} \in [a,b]\mbox{ and } S_n = m\Big)
&\approx (1+o(1)) \,  {\alpha_2(\rho-\rho^\star)^{-\beta}n^{1-\beta-\gamma}} \,\int_a^{b+o(1)}
\frac{\mathtt e^{-(\rho_n-\nu_n\mp \delta_n) y\alpha_1^{-1/\gamma}}}{\Phi\big(1-y\alpha_1^{-\nicefrac1\gamma} n^{-1}\big)} \, \gamma y^{\gamma-1} \, dy\\
&= (1+o(1)) \,  {\alpha_2 \gamma (\rho-\rho^\star)^{-\beta}n^{1-\beta-\gamma}} \,\int_a^{b}
\exp\big(-(\rho-\rho^{\star})\alpha_1^{-\nicefrac1\gamma} y \big) \, y^{\gamma-1} \, dy.
\end{align*}
Now recall from Lemma~\ref{lem:calcul_proba_E1}\,$(ii)$ that 
$\mathbb{P}_{\bs X}( S_n=m) =  (1+o(1)) \, \alpha_1\alpha_2 (\rho-\rho^\star)^{-\beta-\gamma} \Gamma(\gamma+1) n^{1-\beta-\gamma},$ to obtain
$$\mathbb{P}_{\bs X}\Big(\big(\sfrac{K_n}{n^{1-\gamma}}\big)^{\nicefrac1{\gamma}} \in [a,b]\, \Big| \, S_n = m\Big) = (1+o(1)) \,  \frac{(\rho-\rho^\star)^{\gamma}}{ \alpha_1 \Gamma(\gamma)} \,\int_a^{b}
\exp\big(-(\rho-\rho^{\star})\alpha_1^{-\nicefrac1\gamma} y \big) \,  y^{\gamma-1} \, dy,$$
which concludes the proof of Theorem~\ref{th:I_n}.
\end{proof}

\begin{proof}[Proof of Theorem~\ref{th:gamma}]
Fix $u>0$, $\Delta>0$ and calculate
\[\mathbb{P}_{\bs X}\big(n(1-F_n) \leq u \text{ and } S_n=m\big)
= \mathbb{P}_{\bs X}\big(F_n\geq 1-\nicefrac{u}{n} \text{ and } S_n=m\big)
=(1+o(1))\sum_{\heap{i \, \text{such that}}{ X_i\geq 1-\nicefrac{u}{n}}} 
\mathbb{P}_{\bs X}\big(\mathcal{E}_{1, i}^*\big),\]
since we have shown that $\mathbb{P}_{\bs X}(\bigcup_{i=1}^n \mathcal{E}_{1, i}^* \st S_n = m) \to 1$ when $n\uparrow\infty$.
Thus,
\[\mathbb{P}_{\bs X}\big(n(1-F_n) \leq u \text{ and } S_n=m\big)
\approx \sum_{k=0}^{\frac u\Delta-1} 
\sum_{\heap{i \, \text{such that}}{X_\in[1-\Delta\frac{k+1}{n}, 1-\Delta\frac{k}{n})}}
\alpha_2 (\rho-\rho^{\star})^{-\beta} n^{-\beta} \frac{X_i^{(\rho_n-\nu_n\mp\delta_n)n}}{\Phi(X_i)},
\]
in view of Lemma~\ref{lem:calcul_proba_E1i*}.
It implies
\begin{align*}
\mathbb{P}_{\bs X}\big(n(1-F_n) \leq u \text{ and } S_n=m \big)
&\geq
\sum_{k=0}^{\frac u\Delta-1} 
\sum_{\heap{i \, \text{such that}}{X_i\in[1-\Delta\frac{k+1}{n}, 1-\Delta\frac{k}{n})}}
\alpha_2 (\rho-\rho^{\star})^{-\beta} n^{-\beta} \left(1-\frac{\Delta(k+1)}{n}\right)^{(\rho_n-\nu_n+\delta_n)n} (1+o(1))\\
&\geq
\sum_{k=0}^{\frac u\Delta-1} 
N_k(n)\,
\alpha_2 (\rho-\rho^{\star})^{-\beta} n^{-\beta} \left(1-\frac{\Delta(k+1)}{n}\right)^{(\rho_n-\nu_n+\delta_n)n} (1+o(1)),
\end{align*}
where $N_k(n) = \left|\left\{i \colon  X_i\in[1-\Delta\frac{k+1}{n}, 1-\Delta\frac{k}{n})\right\}\right|$.
Estimating the expectation and variance of $N_k(n)$ and applying Chebyshev's inequality gives, in $\mathtt P$-probability, 
$$n^{\gamma-1} N_k(n) \to \alpha_1 \Delta^\gamma\big( (k+1)^\gamma-k^\gamma\big),$$ 
if $n\uparrow\infty$ and $0\leq k< \frac{u}{\Delta}$.
Thus,
\begin{align*}
\mathbb{P}_{\bs X}\big(n(1-F_n) \leq u \text{ and } S_n=m\big)
&\geq \alpha_1\alpha_2 (\rho-\rho^{\star})^{-\beta} n^{1-\beta-\gamma}
\Delta^\gamma
\sum_{k=0}^{\frac u\Delta-1} 
 \big( (k+1)^\gamma-k^\gamma\big)
\mathtt e^{-(\rho_n-\nu_n+\delta_n)\Delta(k+1)} (1+o(1))\\
&\geq \alpha_1\alpha_2\gamma (\rho-\rho^{\star})^{-\beta} n^{1-\beta-\gamma} \mathtt e^{-(\rho-\rho^\star)\Delta}
\int_{0}^{u} 
x^{\gamma-1}
\mathtt{e}^{-(\rho_n-\nu_n+\delta_n)x} dx\ (1+o(1)),
\end{align*}
because the function $x\mapsto x^{\gamma-1}
\mathtt{e}^{-(\rho_n-\nu_n+\delta_n)x}$ is decreasing on $(0, \infty)$.
Recall that, as $\gamma < 1$, we have
$\mathbb P_{\bs X}(S_n=m) = \alpha_1\alpha_2 (\rho-\rho^{\star})^{-\beta-\gamma} \Gamma(\gamma+1) n^{1-\beta-\gamma} (1+o(1)).$
Together, this implies
\[\liminf_{n\to\infty}
\mathbb{P}_{\bs X}\big(n(1-F_n) \leq u \big| S_n=m\big)
\geq \frac{(\rho-\rho^\star)^{\gamma}}{\Gamma(\gamma)}
\mathtt e^{-(\rho-\rho^\star)\Delta}
{\int_0^{u} x^{\gamma-1} \mathtt{e}^{-(\rho-\rho^{\star})x} dx},
\]
and letting $\Delta\downarrow 0$  concludes the proof.
\end{proof}

\section{Fluctuations of the condensate in the weak disorder case}\label{sec:fluctuations}
In this section, we prove Theorem~\ref{thm:fluctuations}. It follows by combining
Proposition~\ref{prop:TCL}  with the following result.

\begin{prop}
Let $2-\beta<\gamma\leq 1$ and $\rho>\rho^\star$. 
For all $u\in \mathbb R$ there exists $u_n\downarrow0$ such that, with high $\mathtt P$-probability as $n\uparrow\infty$, we have
\[\mathbb P_{\bs X}\Big(\frac{Q_n^{_{(1)}} - m + n \nu_n}{n^{\kappa}}\leq u\ \Big|\ S_n = m\Big)
\approx (1+o(1))\, \mathbb P_{\bs X}\left(\frac{\nu_n n - \sum_{i=1}^n  Q_i}{n^{\kappa}}\leq u \pm u_n\right),\]
where $\kappa=\frac12$, if $\beta+\gamma\geq 3$, and $\kappa=\frac1{\beta+\gamma-1}$ otherwise.
\end{prop}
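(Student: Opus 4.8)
The plan is to condition on the event $S_n=m$ and decompose it, as in the proof of Theorem~\ref{th:condensation}, by the location and size of the condensate. Since Lemmas~\ref{lem:E2<<E1}, \ref{lem:E4<<E1} and~\ref{lem:E3<<E1} show that $\mathbb P_{\bs X}(S_n=m) = (1+o(1))\mathbb P_{\bs X}(\mathcal E_1)$ and moreover $\mathbb P_{\bs X}(\mathcal E_1) = (1+o(1))\sum_i \mathbb P_{\bs X}(\mathcal E_{1,i}^*)$, the event $\{Q_n^{(1)}=k, S_n=m\}$ is, up to negligible corrections, the event that exactly one site~$i$ carries $k$ particles (with $k$ in the window $|k-(m-\nu_nn)|\le\delta_nn$, which is where the condensate must lie by Theorem~\ref{th:condensation}) and the remaining $n-1$ sites carry $m-k$ particles in total with each below $\varepsilon n$. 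First I would write, for $k$ in the condensate window,
\[
\mathbb P_{\bs X}\big(Q_n^{(1)}=k,\ S_n=m\big)
= (1+o(1))\sum_{i=1}^n \frac{p_k X_i^{\,k}}{\Phi(X_i)}\,
\mathbb P_{\bs X}\Big(\textstyle\sum_{j\neq i} Q_j = m-k,\ \max_{j\ne i}Q_j\le \varepsilon n\Big),
\]
and then drop the truncation $\max_{j\ne i}Q_j\le\varepsilon n$ at the cost of a factor $1+o(1)$, exactly as in Lemma~\ref{lem:calcul_proba_E1}. This reduces the conditional law of $(Q_n^{(1)}-m+n\nu_n)/n^\kappa$ to a ratio of weighted sums over~$i$ of $\mathbb P_{\bs X}(S_{n-1}^{(i)}=m-k)$.

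Next I would replace $S_{n-1}^{(i)}$ by $S_n$ in this local probability. Since $Q_i = o(n\delta_n)$ in $\mathbb P_{\bs X}$-probability uniformly in~${\bs X}$ (the tails go to zero uniformly), one has $S_{n-1}^{(i)} = S_n - Q_i$ with a correction that is $o(n^\kappa)$ with probability tending to one — this is the source of the error~$u_n\downarrow 0$ in the statement. Writing $k = m - n\nu_n + n^\kappa t$, the event $\{S_{n-1}^{(i)} = m-k\}$ becomes $\{\nu_n n - S_{n-1}^{(i)} = n^\kappa t\}$, i.e.\ (after the swap) $\{(\nu_n n - S_n)/n^\kappa = t + o(1)\}$. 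Summing over $k\le m - n\nu_n + n^\kappa u$ and using that $p_k = (1+o(1))\alpha_2(m-\nu_nn)^{-\beta}$ is essentially constant across the condensate window (Lemma~\ref{lem:calcul_proba_E1i*}), the ratio of the partial sum to the full sum $\sum_k \mathbb P_{\bs X}(Q_n^{(1)}=k,S_n=m) = (1+o(1))\mathbb P_{\bs X}(S_n=m)$ collapses to $\mathbb P_{\bs X}\big((\nu_n n - \sum_i Q_i)/n^\kappa \le u \pm u_n\big)$, which is the claim. The $X_i$-dependent weights $X_i^{(\rho_n-\nu_n\mp\delta_n)n}/\Phi(X_i)$ factor out of both numerator and denominator and cancel — this is exactly what happened in the proof of Theorems~\ref{th:I_n} and~\ref{th:gamma}.

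The main obstacle, and the step requiring genuine care, is making the swap $S_{n-1}^{(i)}\leftrightarrow S_n$ uniform enough and with a small enough error. One needs that the distribution function of $(\nu_n n - S_{n-1}^{(i)})/n^\kappa$ and that of $(\nu_n n - S_n)/n^\kappa$ differ by $o(1)$ at the point~$u$, uniformly over~$i$, with high $\mathtt P$-probability; this uses that $Q_i/n^\kappa \to 0$ in $\mathbb P_{\bs X}$-probability together with the fact that (by Proposition~\ref{prop:TCL}) the limiting law of $(\nu_n n - S_n)/n^\kappa$ is continuous, so that a small perturbation of the argument changes the distribution function only slightly. One also has to be careful that removing site~$i$ from the sum $\sum_{j\ne i} Q_j$ does not change the Proposition~\ref{prop:TCL} scaling — but since that result is about sums of $n$ (or $n-1$) conditionally independent variables with the stated regularly-varying structure, dropping one term is harmless. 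A secondary, purely bookkeeping, point is to check that the range of admissible condensate sizes $k$ (the window $|k-(m-\nu_nn)|\le\delta_nn$) is wide enough to contain all of $\{k\le m-n\nu_n+n^\kappa u\}$ intersected with the bulk of the distribution; this holds because $n^\kappa \ll n\delta_n$ by the choice of $\delta_n$ at the start of Section~\ref{sec:condensation}, and it also forces $u_n\to0$ at a controlled rate. Assembling these estimates, together with the already-established $\mathbb P_{\bs X}(S_n=m)=(1+o(1))\sum_i\mathbb P_{\bs X}(\mathcal E_{1,i}^*)$, yields the Proposition, and combining with Proposition~\ref{prop:TCL} (stable limit when $2<\beta+\gamma<3$, Gaussian when $\beta+\gamma\ge3$) then gives Theorem~\ref{thm:fluctuations}.
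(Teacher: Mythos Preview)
Your proposal is correct and follows essentially the same route as the paper: express the conditional law as the ratio $\sum_i \mathbb P_{\bs X}(\mathcal E_{1,i}^*\cap\{Q_n^{(1)}-(\rho_n-\nu_n)n\le un^\kappa\})\big/\sum_i \mathbb P_{\bs X}(\mathcal E_{1,i}^*)$, use the near-constancy of $p_k$ over the condensate window, swap $S_{n-1}^{(i)}$ for $S_n$ (this is where $u_n$ enters), and then show the residual ratio of $X_i$-weighted sums is $1+o(1)$. One small correction: those weighted sums in numerator and denominator carry \emph{different} exponents ($\mp\delta_n n$ versus $\pm\delta_n n$), so they do not literally factor out and cancel; the paper instead invokes Lemma~\ref{lem:approx_sum}$(ii)$ to get $\sum_i X_i^{(\rho_n-\nu_n\mp\delta_n)n}/\Phi(X_i)\big/\sum_i X_i^{(\rho_n-\nu_n\pm\delta_n)n}/\Phi(X_i)\approx (1\mp 2\delta_n/(\rho_n-\nu_n))^{-\gamma}=1+o(1)$, which is a different mechanism from Theorems~\ref{th:I_n}--\ref{th:gamma} (there the weights \emph{produce} the Gamma law rather than disappear).
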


\begin{proof}
By Lemma~\ref{lem:calcul_proba_E1} we have
$$\mathbb P_{\bs X}\Big(\frac{Q_n^{_{(1)}} - m +n\nu_n}{n^{\kappa}}\leq u\ \Big|\ S_n = m\Big)
\sim \frac{\sum_{i=1}^n \mathbb P_{\bs X}(\mathcal E_{1,i}^* \cap \{Q_n^{(1)} - (\rho_n -\nu_n)n\leq u n^{\kappa}\})}
{\sum_{i=1}^n \mathbb P_{\bs X}(\mathcal E_{1,i}^*)}.$$
The right hand side can be written as
\begin{align}
&\frac
{\displaystyle \sum_{i=1}^n \sum_{\heap{-\delta_n n\leq k-n(\rho_n-\nu_n)}{\leq u n^{\kappa}}} 
\mathbb P_{\bs X} (Q_i = k) \mathbb P_{\bs X}\Big(\sum_{j\neq i}Q_j = m-k\Big)}
{\displaystyle \sum_{i=1}^n \sum_{|k-n(\rho_n-\nu_n)|\leq \delta_n n} 
\mathbb P_{\bs X} (Q_i = k) \mathbb P_{\bs X}\Big(\sum_{j\neq i}Q_j = m-k\Big)}\notag\\
&\phantom{lustiglustig}\approx \frac
{\displaystyle \sum_{i=1}^n  \frac{X_i^{n(\rho_n-\nu_n) \mp \delta_n n}}{\Phi(X_i)}\ 
\mathbb P_{\bs X}\Big(n\nu_n  - u n^{\kappa}\leq \sum_{j\neq i} Q_j\leq n\nu_n +\delta_n n\Big)}
{\displaystyle \sum_{i=1}^n  \frac{X_i^{n(\rho_n-\nu_n) \pm \delta_n n}}{\Phi(X_i)}\ 
\mathbb P_{\bs X}\Big(n\nu_n -\delta_n n\leq \sum_{j\neq i} Q_j \leq n \nu_n + \delta_n n\Big)}\ (1+o(1)). \label{rhsflu}
\end{align}
Note that $\max_{i=1..n} \mathbb P_{\bs X}(Q_i\geq a_n)\to 0$ for any $a_n\uparrow\infty$. Hence we
can find $u_n\downarrow0$ such that, for all $i\in\{1, \ldots, n\}$,
\[\mathbb P_{\bs X}\Big(n\nu_n  - u n^{\kappa}\leq \sum_{j\neq i} Q_j\leq n\nu_n +\delta_n n\Big)
\approx (1+o(1))\, \mathbb P_{\bs X}\Big(n \nu_n -(u\pm u_n)n^\kappa \leq \sum_{j=1}^n Q_j  
\leq n\nu_n+\delta_n n \pm \sfrac12 \delta_n n\Big),\]
where the $o(1)$-term can be chosen independently of $i$.
Therefore, using the choice of $\delta_n$ and a similar bound for the probability in the denominator, we see that
\eqref{rhsflu} is
$$\approx (1+o(1))\,  \frac
{\displaystyle \sum_{i=1}^n  \frac{X_i^{n(\rho_n-\nu_n) \mp \delta_n n}}{\Phi(X_i)}}
{\displaystyle \sum_{i=1}^n  \frac{X_i^{n(\rho_n-\nu_n) \pm \delta_n n}}{\Phi(X_i)}} \,
\mathbb P_{\bs X}\Big(n\nu_n  - (u\pm u_n)n^\kappa\leq \sum_{j=1}^n Q_j\Big).$$
In view of Lemma~\ref{lem:approx_sum}$(ii)$, using that $\gamma<1$, we get that
\begin{align*}
\frac
{\displaystyle \sum_{i=1}^n  \frac{X_i^{n(\rho_n-\nu_n) \mp \delta_n n}}{\Phi(X_i)}}
{\displaystyle \sum_{i=1}^n  \frac{X_i^{n(\rho_n-\nu_n) \pm \delta_n n}}{\Phi(X_i)}}
&\approx \left(1\mp \frac{2\delta_n}{\rho_n-\nu_n}\right)^{-\gamma}\ (1+o(1))
= (1+o(1)).
\end{align*}
We have thus proved the statement in the case $\gamma < 1$.
%
\end{proof}

\section{Further comments and open questions}

\subsubsection*{Fluctuations in the presence of strong disorder.}
Our result on quenched fluctuations in the size of the condensate, Theorem~\ref{thm:fluctuations}, 
is restricted to the weak disorder regime $\gamma<1$. We now give some hints how fluctuations could be treated 
in the strong disorder case. We do not provide details since the focus of the paper is on the weak disorder case.
\smallskip

In the case $1< \gamma <2$ the assumption $n^{\nicefrac1{\gamma}} = o(\delta_n n)$ made in the proof of Theorem~\ref{th:condensation}, 
and used to prove Lemma~\ref{lem:E3<<E1} makes $\delta_n$ too large to control precisely the fluctuations of the size of the condensate.
We believe that with some extra effort this assumption can be dropped and Theorem~\ref{thm:fluctuations} can be extended verbatim to 
this regime. 
\smallskip

When $\gamma\geq 2$ more significant changes to the statement proof of Theorem~\ref{thm:fluctuations} are needed. It turns out that 
due to the large fluctuations of the fitness values in this regime the random variables $\sum_{i=1}^n Q_i$ in the grand canonical framework are 
not a sufficiently good approximation of the size of the condensate in the canonical framework. A solution to this problem comes from 
renormalising the fitnesses by their maximum. More precisely, for any~$n$, 
let $\bar X_{i,n} = X_i/ {X_n^{_{(1)}}}$, for all $i\in\{1, \ldots, n\}$. Note that 
the renormalised fitnesses $(\bar X_{1,n}, \ldots, \bar X_{n,n})$ are no longer independent random variables, but
$$\lim_{n\to\infty} \sup_{1\leq i\leq n} \bar X_{i,n}/X_i =1, \qquad \mbox{  in $\mathtt P$-probability}. $$
Defining the random variables $\bar Q_{1,n}, \ldots, \bar Q_{n,n}$ by
\[\mathbb P_{\bs X}(\bar Q_{i,n} = k) = \frac{p_k \bar X_{i,n}^k}{\Phi(\bar X_{i,n})}, \qquad\mbox{ for all }k\in\N,\]
it is straightforward to see from Equation~\eqref{eq:stat_dist} that the law of $(\bar Q_{1,n}, \ldots, \bar Q_{n,n})$
conditional on $\sum_{i=1}^n \bar Q_{i,n} = m$ is equal to the law of $(Q_1, \ldots, Q_n)$ under $P_{m,n}$.
Analysing this ensemble would permit to prove that, if $\rho> \rho^{\star}$, we have in quenched distribution,
\[\frac{Q_n^{(1)}-(m-\bar \nu_n n)}{n^{\nicefrac12}} \to W,\]
where $W$ is a normally distributed random variable, and
\[\bar \nu_n := \frac1n \sum_{\heap{i=1}{i\neq J_n}}^n \mathbb E_{\bs X} \bar Q_{i,n},\]
where $J_n\in\{1,\ldots,n\}$ is the index with $X_{J_n}=X_n^{_{(1)}}$.
Note that the size of the condensate is approximated by $m-\bar \nu_n n$ and not by $m-\nu_n n$ as in Theorem~\ref{thm:fluctuations}.
If $\gamma>2$ this makes a difference. Indeed, by a Taylor expansion of the 
function \smash{$x\mapsto \nicefrac{x\Phi'(x)}{\Phi(x)}$}, using that \smash{$X_n^{_{(1)}} = 1 - \Theta_\PP(n^{-\nicefrac1{\gamma}})$, }
one can see that, in \mbox{$\mathtt P$-probability}, the scaled difference $\sqrt{n}\, (\nu_n - \bar\nu_n)$ tends to zero when $\gamma < 2$ 
but does not tend to zero when $\gamma \geq 2$.

\subsubsection*{Behaviour at criticality.}
In the present article, we assume that the density of particles $\rho_n := \nicefrac m n \to \rho > \rho^{\star}$ when $n\uparrow\infty$.
It would be interesting to \emph{zoom into the transition window}, assuming that $\rho_n$ behaves like $\rho_n = \rho^{\star} + \varepsilon_n$ for some $\varepsilon_n\downarrow 0$. How does the phase transition manifest itself at criticality?

\subsubsection*{Strong excess of particles.}
In another direction, it could be of interest to understand how the system behaves when the average number of particles in the 
grand canonical model is no longer of order $\rho^{\star} n$, but of order $\rho n^{\eta}$ where $\eta>1$. 
Under which condition on $\beta, \gamma, \eta$ do we have condensation? Where is the condensation happening? 
What is the size of the condensate?

\appendix

\section{Random variables near their essential supremum}\label{app:X}

This appendix is devoted to asymptotic properties of  a random variable~$X$ with distribution~$\mu$ on~$[0,1]$,
which satisfies~\eqref{eq:rvmu}. We denote by $(X_i)_{i\in \N}$ an i.i.d. sequence of random variables with the same
distribution as~$X$. Let \smash{$(X_n^{_{(1)}}, \ldots, X_n^{_{(n)}})$}  be the order statistics  of $(X_1, \ldots, X_n)$.  
In some results we additionally refer to a continuous function $\Psi \colon [0,1] \to (0, \infty)$ such that $\Psi(1)=1$.
\medskip

This first lemma is a classical result for regularly varying random variables:
\begin{lem}[{see~\cite[Chapter~0.4]{Resnick}}]\label{lem:extremal_properties_X}
We have, in probability as $n\uparrow\infty$, 
\[1-X_n^{_{(1)}} = \Theta_{\mathtt P}\big(n^{-\nicefrac1{\gamma}}\big) 
\quad \text{ and }\quad 
1-\frac{X_n^{_{(2)}}}{X_n^{_{(1)}}} = \Theta_{\mathtt P}\big(n^{-\nicefrac1{\gamma}}\big).\]
\end{lem}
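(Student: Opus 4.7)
The plan is to reduce both claims to classical extreme value theory for i.i.d.\ samples in the Weibull max-domain of attraction. Setting $Y_i := 1 - X_i$, the variables $Y_i$ are i.i.d.\ with $\mathtt P(Y_i \leq y) \sim \alpha_1 y^\gamma$ as $y \downarrow 0$ by~\eqref{eq:rvmu}, and I write $Y_n^{(1)} := 1-X_n^{(1)}$ and $Y_n^{(2)} := 1-X_n^{(2)}$ for the two smallest order statistics of $Y_1,\ldots,Y_n$.

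For the first claim I would compute directly, for any $t>0$,
\[
\mathtt P\big(n^{1/\gamma} Y_n^{(1)} > t\big) = \big(1 - \mathtt P(Y_1 \leq tn^{-1/\gamma})\big)^n = \big(1 - \alpha_1 t^\gamma n^{-1}(1+o(1))\big)^n \longrightarrow e^{-\alpha_1 t^\gamma}.
\]
Since the limiting Weibull distribution places all its mass on $(0,\infty)$, this shows that $n^{1/\gamma}(1-X_n^{(1)})$ is bounded both above and away from zero in probability, which is the first assertion. For the second claim I would upgrade this to the joint convergence
\[
\big(n^{1/\gamma}Y_n^{(1)},\, n^{1/\gamma}Y_n^{(2)}\big) \Rightarrow (T_1, T_2),
\]
where $T_1 < T_2$ are the first two arrival times of a Poisson point process on $[0,\infty)$ with intensity $\gamma\alpha_1 t^{\gamma-1}\,dt$. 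This can be obtained either by invoking the point-process convergence theorem for i.i.d.\ samples in the Weibull domain of attraction~\cite{Resnick}, or by the direct binomial calculation
\[
\mathtt P\big(Y_n^{(1)} > sn^{-1/\gamma},\, Y_n^{(2)} > tn^{-1/\gamma}\big) = \big(1-F(tn^{-1/\gamma})\big)^n + n\big(F(tn^{-1/\gamma}) - F(sn^{-1/\gamma})\big) \big(1-F(tn^{-1/\gamma})\big)^{n-1},
\]
which converges to $(1 + \alpha_1(t^\gamma - s^\gamma))e^{-\alpha_1 t^\gamma}$, the joint tail distribution of $(T_1,T_2)$.

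Since the first part shows $X_n^{(1)} \to 1$ in $\mathtt P$-probability, Slutsky's lemma then yields
\[
n^{1/\gamma}\Big(1 - \frac{X_n^{(2)}}{X_n^{(1)}}\Big) = \frac{n^{1/\gamma}(Y_n^{(2)} - Y_n^{(1)})}{X_n^{(1)}} \Rightarrow T_2 - T_1,
\]
and the second assertion follows because $T_2 - T_1$ is almost surely strictly positive and finite. The one mildly delicate point is precisely this strict positivity, which is what furnishes the lower bound $1 - X_n^{(2)}/X_n^{(1)} \gtrsim n^{-1/\gamma}$ in probability; it is immediate from the absolute continuity of the Poisson intensity $\gamma\alpha_1 t^{\gamma-1}$, so no substantive obstacle is expected.
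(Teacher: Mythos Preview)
Your argument is correct and is precisely the standard extreme-value computation that the cited reference contains; the paper itself gives no proof of this lemma beyond the citation to \cite[Chapter~0.4]{Resnick}, so there is nothing to compare against. Your direct binomial calculation of the joint tail of $(n^{1/\gamma}Y_n^{(1)},n^{1/\gamma}Y_n^{(2)})$ and the Slutsky step are both sound, and the positivity of $T_2-T_1$ is indeed immediate from the absolute continuity of the Poisson intensity.
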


\begin{lem}\label{lem:moments}
As $r\uparrow\infty$, we have $\mathtt E \left(\frac{X^r}{\Psi(X)}\right) \sim \alpha_1 \Gamma(\gamma+1)\, r^{-\gamma}$.
\end{lem}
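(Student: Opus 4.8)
The plan is to evaluate the asymptotics of $\mathtt E\bigl(X^r/\Psi(X)\bigr)$ by writing it as an integral against $\mu$ and localising near $x=1$, where both the regular variation of $\mu$ and the normalisation $\Psi(1)=1$ come into play. First I would write $\mathtt E\bigl(X^r/\Psi(X)\bigr) = \int_{[0,1]} x^r \Psi(x)^{-1}\,\mu(dx)$ and observe that, since $x^r$ decays geometrically away from $1$ while $\Psi$ is continuous and bounded away from $0$ on $[0,1]$, the contribution of $\{x \le 1-\varepsilon\}$ is at most $(1-\varepsilon)^r \cdot \cst = o(r^{-\gamma})$ for any fixed $\varepsilon>0$. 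So it suffices to analyse the integral over $[1-\varepsilon,1]$, and on this set $\Psi(x)^{-1} = 1 + o(1)$ as $\varepsilon\downarrow 0$ by continuity and $\Psi(1)=1$; this lets me replace $\Psi(x)^{-1}$ by $1$ at the cost of a multiplicative $(1+o(1))$, reducing the problem to the asymptotics of $\int_{[1-\varepsilon,1]} x^r\,\mu(dx)$, i.e.\ essentially $\mathtt E(X^r)$.

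Next I would compute $\mathtt E(X^r)$ asymptotically using the tail assumption~\eqref{eq:rvmu}. The cleanest route is the substitution $x = 1 - u/r$ together with Fubini: write $x^r = \exp(r\log x)$ and note $x^r \approx \mathtt e^{-r(1-x)}$ uniformly for $x$ near $1$. Then, using $\mathtt E(X^r) = \int_0^1 r x^{r-1}\mu([0,x])\,dx$ — or, more conveniently, integrating by parts to bring in the tail function $\bar F(t) := \mu([1-t,1]) \sim \alpha_1 t^{\gamma}$ — one gets
\[
\mathtt E(X^r) = \int_0^1 \mu\bigl(\{y : y^r > s\}\bigr)\,ds,
\]
and after the change of variables $s = \mathtt e^{-v}$, $t = v/r$, the integral becomes $r\int_0^\infty \bar F\bigl((v/r)\wedge 1 \text{-ish}\bigr)\,\mathtt e^{-v}\,dv$ up to negligible corrections. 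Substituting $\bar F(t)\sim\alpha_1 t^\gamma$ and pulling out $r^{-\gamma}$ yields $\mathtt E(X^r) \sim \alpha_1 r^{-\gamma}\int_0^\infty v^\gamma \mathtt e^{-v}\,dv = \alpha_1 \Gamma(\gamma+1)\, r^{-\gamma}$. Combining with the earlier reduction gives $\mathtt E\bigl(X^r/\Psi(X)\bigr) \sim \alpha_1\Gamma(\gamma+1)\,r^{-\gamma}$.

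The main obstacle is making the localisation step fully rigorous with the correct order of limits: one must show that the error from replacing $\Psi^{-1}$ by $1$ and from the tail of the $v$-integral can be made $o(r^{-\gamma})$ \emph{uniformly} before sending $\varepsilon\downarrow 0$, which requires a careful $\varepsilon$-$r$ interchange (choose $\varepsilon$ small so that $|\Psi(x)^{-1}-1|<\eta$ on $[1-\varepsilon,1]$, get the bound for all large $r$, then let $\eta\downarrow0$). A dominated-convergence argument for the rescaled integral, using that $\bar F(t)\le 1$ and $\bar F(v/r)\mathtt e^{-v}$ is dominated by $\mathtt e^{-v}$, handles the tail uniformly; the regular variation $\bar F(v/r)r^\gamma/v^\gamma \to \alpha_1$ pointwise in $v$ then gives the limit. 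Everything else is routine real analysis. A remark worth including: this lemma is exactly what drives the appearance of $\Gamma(\gamma+1)$ in Lemma~\ref{lem:LLN_nu} and throughout, via $\Psi=\Phi$ or $\Psi\equiv 1$, and it also underlies the quantity $U_k^{(n)}$ used in Lemma~\ref{lem:approx_sum}.
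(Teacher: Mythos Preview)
Your approach is correct and matches the paper's: localise near $x=1$, use continuity and $\Psi(1)=1$ to drop $\Psi$, then compute $\mathtt E(X^r)$ via the layer-cake formula and a change of variables producing $\alpha_1 r^{-\gamma}\int_0^\infty z^\gamma \mathtt e^{-z}\,dz$. The paper sidesteps your $\varepsilon$--$r$ interchange by taking an $r$-dependent cutoff at $1-2\gamma(\log r)/r$, which makes both the continuity step and the tail bound ($O(r^{-2\gamma})$) immediate; note also that your dominated-convergence step should invoke $\bar F(t)\le C t^\gamma$ near $0$ (which follows from \eqref{eq:rvmu}) rather than merely $\bar F\le 1$, since it is the \emph{rescaled} integrand $r^\gamma \bar F(v/r)\,\mathtt e^{-v}$ that must be dominated.
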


\begin{proof}
First note that
\[\mathtt E\left[\frac{X^r}{\Psi(X)}\right]
= \mathtt E\left[\frac{X^r}{\Psi(X)}\ \indi\{X> 1-\nicefrac{2\gamma \log r}{r}\}\right]
+ \mathtt E\left[\frac{X^r}{\Psi(X)}\ \indi\{X\leq 1-\nicefrac{2\gamma \log r}{r}\}\right].\]
The second term of the above sum verifies
\[\mathtt E\left[\frac{X^r}{\Psi(X)}\ \indi\{X\leq 1-\nicefrac{2\gamma \log r}{r}\}\right]
\leq r^{-2\gamma} \mathtt E\left[\frac{1}{\Psi(X)}\right] \leq \frac1{p_0} r^{-2\gamma},\]
since $\Psi$ is bounded from below by some $p_0>0$ on $[0,1]$.
The fact that $\Psi$ is continuous in 1 gives that
\[\mathtt E\left[\frac{X^r}{\Psi(X)}\ \indi\{X> 1-\nicefrac{2\gamma \log r}{r}\}\right]
= (1+o(1))\ \mathtt E[X^r\ \indi\{X> 1-\nicefrac{2\gamma \log r}{r}\}].\]
By Fubini's theorem, and in view of Assumption~\eqref{eq:rvmu}, we have
\begin{align*}
\mathtt E[ X^r\ \indi\{X> 1-\nicefrac{2\gamma \log r}{r}\}]
&= \int_0^1 \mathtt P(X^r\ \indi\{X> 1-\nicefrac{2\gamma \log r}{r}\}\geq x) \, dx \\
&= \int_0^{(1-\nicefrac{2\gamma\log r}{r})^r} \mathtt P(X >  1-\nicefrac{2\gamma \log r}{r})\, dx
+  \int_{(1-\nicefrac{2\gamma\log r}{r})^r}^1 \mathtt P(X^r\geq x) \,dx.\\
&= (1+o(1))\ r^{-2\gamma} \mu(1-\nicefrac{2\gamma \log r}{r}, 1)
+  \int_{(1-\nicefrac{2\gamma\log r}{r})^r}^1 \mu(x^{\nicefrac 1 r}, 1) \,dx\\
&= o(r^{-\gamma}) + \int_0^{2\gamma\log r} \mu(1-\nicefrac z r, 1) (1-\nicefrac z r)^{r-1}\, dz,
\end{align*}
by the change of variables $r(1-x^{\nicefrac1r}) = z$, $dx = -(1-\nicefrac z r)^{r-1} dz$.
Thus by Equation~\eqref{eq:rvmu}, we get
\[\mathtt E[ X^r\ \indi\{X> 1-\nicefrac{2\gamma \log r}{r}\}] 
=\alpha_1 r^{-\gamma} \int_0^{2\gamma\log r} z^{\gamma} (1-\nicefrac z r)^{r-1} dz+o(r^{-\gamma})
= (\alpha_1+o(1))\, r^{-\gamma} \int_0^{\infty} z^{\gamma} \mathtt e^{-z} dz,\]
which concludes the proof.
\end{proof}

Note that $\mathtt E X^r \sim \alpha_1 \Gamma(\gamma+1)\, r^{-\gamma}$ as $r\uparrow\infty$,
by choosing $\Psi(x)=1$ for all $x\in[0,1]$.

\begin{lem}\label{lem:approx_sum}\ \\[-5mm]
\begin{enumerate}[(i)]
\item 
For all $n\geq 1$ and $k\geq 0$, let $$V_k^{_{(n)}} := \frac{\sum_{i=1}^n X_i^k}{(X_n^{_{(1)}})^k}.$$ 
The sequence $(V_k^{_{(n)}})_{k\geq 0}$ is non-increasing for all integer $n$.

Let $(s_n)_{n\geq 1}$ be a sequence of positive reals,
such that $s_n \gg n^{\nicefrac1{\gamma}} \log n$. 
Then,
\[\lim_{n\to\infty} V_{s_n}^{(n)} = 1 \mbox{  in $\mathtt P$-probability.}\]
\item  For all $n\geq 1 $ and for all $k\geq 0$, let $U_0^{_{(n)}} := 0$ and
\[U_k^{_{(n)}}:= \frac{k^{\gamma}}{n} \sum_{i=1}^n \frac{X_i^k}{\Psi(X_i)}.\]
Let $(s_n)_{n\geq 1}$ be a sequence of positive reals, such that $s_n\ll n^{\nicefrac1{\gamma}}$.
Then,
\[\lim_{n\to\infty} U_{s_n}^{(n)} =  \alpha_1\Gamma(\gamma+1) \mbox{  in $\mathtt P$-probability.}\]
\item For all constants $c>0$, the sequence
$(\sum_{i=1}^n X_i^{c n^{\nicefrac1{\gamma}}})_{n\geq 1}$ is tight.
\end{enumerate}
\end{lem}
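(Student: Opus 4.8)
The plan is to treat the three parts separately; the only substantial inputs are the moment asymptotics of Lemma~\ref{lem:moments} and the extreme-value estimate of Lemma~\ref{lem:extremal_properties_X}, and everything else reduces to a first- or second-moment computation. For part $(i)$, write $V_k^{(n)}=\sum_{i=1}^n\big(X_i/X_n^{(1)}\big)^k$: each base lies in $[0,1]$, so $k\mapsto\big(X_i/X_n^{(1)}\big)^k$ is non-increasing and hence so is $k\mapsto V_k^{(n)}$, which gives monotonicity. For the limit I would isolate an index realising the maximal fitness and control the rest: by Lemma~\ref{lem:extremal_properties_X}, given $\varepsilon>0$ there is $\delta>0$ such that, for all large $n$, with $\mathtt P$-probability at least $1-\varepsilon$ one has $X_n^{(2)}/X_n^{(1)}\le 1-\delta n^{-1/\gamma}$; on that event the maximal fitness is attained at a single site, whose contribution to $V_{s_n}^{(n)}$ is exactly $1$, while the at most $n$ remaining terms are each at most $\big(X_n^{(2)}/X_n^{(1)}\big)^{s_n}$, so that
\[0\le V_{s_n}^{(n)}-1\le n\,(1-\delta n^{-1/\gamma})^{s_n}\le n\,\mathtt e^{-\delta\,s_n n^{-1/\gamma}}\longrightarrow 0,\]
since $s_n\gg n^{1/\gamma}\log n$ forces $\delta s_n n^{-1/\gamma}\ge A\log n$ eventually for every fixed $A$. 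As $\varepsilon>0$ was arbitrary, $V_{s_n}^{(n)}\to 1$ in $\mathtt P$-probability.

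Part $(ii)$ I would get from Chebyshev's inequality. The mean is $\mathtt E U_{s_n}^{(n)}=s_n^{\gamma}\,\mathtt E\big[X^{s_n}/\Psi(X)\big]$, which tends to $\alpha_1\Gamma(\gamma+1)$ by Lemma~\ref{lem:moments} (here $s_n\to\infty$, which is the case in all applications of the lemma). For the variance, independence together with a second application of Lemma~\ref{lem:moments}, now to the function $\Psi^2$, which also satisfies its hypotheses, gives
\[\Var U_{s_n}^{(n)}=\frac{s_n^{2\gamma}}{n}\,\Var\!\Big(\frac{X^{s_n}}{\Psi(X)}\Big)\le\frac{s_n^{2\gamma}}{n}\,\mathtt E\!\Big[\frac{X^{2s_n}}{\Psi(X)^2}\Big]=O\!\Big(\frac{s_n^{2\gamma}}{n}\,(2s_n)^{-\gamma}\Big)=O\!\Big(\frac{s_n^{\gamma}}{n}\Big),\]
which is $o(1)$ precisely because $s_n\ll n^{1/\gamma}$; hence $U_{s_n}^{(n)}\to\alpha_1\Gamma(\gamma+1)$ in $\mathtt P$-probability.

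Part $(iii)$ is then immediate: the summands are non-negative, and by the remark following Lemma~\ref{lem:moments} (the case $\Psi\equiv 1$),
\[\mathtt E\sum_{i=1}^n X_i^{cn^{1/\gamma}}=n\,\mathtt E\big[X^{cn^{1/\gamma}}\big]\sim n\cdot\alpha_1\Gamma(\gamma+1)(cn^{1/\gamma})^{-\gamma}=\alpha_1\Gamma(\gamma+1)\,c^{-\gamma},\]
so the expectations are bounded uniformly in $n$ and Markov's inequality yields tightness. The one step requiring genuine care will be the limit in part $(i)$: one has to convert the $\Theta_{\mathtt P}$-statement of Lemma~\ref{lem:extremal_properties_X} into a deterministic decay rate valid on a high-probability event and then verify that the crude count $n$ of non-maximal sites is beaten by the exponential factor $\mathtt e^{-\delta s_n n^{-1/\gamma}}$ exactly in the range $s_n\gg n^{1/\gamma}\log n$ (and no slower growth would do); parts $(ii)$ and $(iii)$ are routine once Lemma~\ref{lem:moments} is in hand.
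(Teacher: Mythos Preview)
Your proposal is correct and follows essentially the same route as the paper: Lemma~\ref{lem:extremal_properties_X} for the limit in part~$(i)$, and Lemma~\ref{lem:moments} plus a second-moment argument for parts~$(ii)$ and~$(iii)$. The only cosmetic differences are that in~$(ii)$ the paper bounds $1/\Psi(X)^2$ by $1/p_0^2$ rather than applying Lemma~\ref{lem:moments} to $\Psi^2$, and in~$(iii)$ the paper also computes the variance and invokes Chebyshev, whereas your first-moment Markov argument already suffices for tightness of non-negative variables.
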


\begin{proof}
$(i)$ 
Fix $n\geq 1$, then, for all $k\geq 0$,
\[\frac{V_{k}^{(n)}}{V_{k+1}^{(n)}}
= X_n^{(1)} \ \frac{\sum_{i=1}^n X_i^k}{\sum_{i=1}^n X_i^{k+1}}
\geq 1,\]
using that $X_i \leq X_n^{(1)}$ for all $i\in\{1, \ldots, n\}$.
Now, observe that, $\sum_{i=2}^n (X_n^{_{(i)}})^{s_n} \leq n (X_n^{_{(2)}})^{s_n},$
which implies that
\[\frac{\sum_{i=2}^n (X_n^{_{(i)}})^{s_n}}{(X_n^{_{(1)}})^{s_n}}
\leq n \Big(\frac{X_n^{_{(2)}}}{X_n^{_{(1)}}}\Big)^{s_n}
= n \big(1- \Theta_{\mathtt P}\big(n^{-\nicefrac1{\gamma}}\big)\big)^{s_n}=o(1), \]
which concludes the proof of $(i)$.

\vspace{\baselineskip}
$(ii)$ We have, as $n\to\infty$, in view of Lemma~\ref{lem:moments},
\[\mathbb E\left[\frac{s_n^{\gamma}}{n} \sum_{i=1}^n \frac{X_i^{s_n}}{\Psi(X_i)}\right]
=  s_n^{\gamma} \mathbb E\left[\frac{X_i^{s_n}}{\Psi(X_i)}\right]\sim \alpha_1\Gamma(1+\gamma).\]
Moreover, applying Lemma~\ref{lem:moments} again and denoting by $p_0$  the positive lower bound of $\Psi$ on $[0,1]$,
\[\Var \left[\frac{s_n^{\gamma}}{n} \sum_{i=1}^n \frac{X_i^{s_n}}{\Psi(X_i)}\right]
= \frac{s_n^{2\gamma}}{n} \Var\left[\frac{X_i^{s_n}}{\Psi(X_i)}\right]
\leq \frac{s_n^{2\gamma}}{p_0^2 n}\ \mathbb E X_i^{2s_n} 
=o(1).\]
The statement now follows by Chebyshev's inequality.

\vspace{\baselineskip}
$(iii)$
Note that, as $n\uparrow\infty$, in view of Lemma~\ref{lem:moments},
\[\mathbb E\left[\sum_{i=1}^n X_i^{cn^{\nicefrac1{\gamma}}}\right]
= n \mathbb E \big[X^{n^{\nicefrac1{\gamma}}} \big] = c^{-\gamma}\alpha_1\Gamma(1+\gamma).\]
Similarly,
$\Var\left(\sum_{i=1}^n X_i^{c n^{\nicefrac1{\gamma}}}\right)
= n \Var \big(X^{c n^{\nicefrac1{\gamma}}}\big)  = O(1),$
which implies the result by Chebyshev's inequality.
\end{proof}

\end{document}